\title[Fiberwise amenability of ample \'{e}tale groupoids]{Fiberwise amenability of ample \'{e}tale groupoids}
\thanks{}
\theoremstyle{plain}
\newtheorem{Thm}{Theorem}[section]
\theoremstyle{definition}
\newtheorem{Exl}[Thm]{Example}
\theoremstyle{plain}
\newtheorem{thm}[Thm]{Theorem}
\newtheorem{lem}[Thm]{Lemma}
\newtheorem{cor}[Thm]{Corollary}
\newtheorem{prop}[Thm]{Proposition}
\theoremstyle{definition}
\newtheorem{defn}[Thm]{Definition}
\newtheorem{eg}[Thm]{Example}
\newtheorem{rmk}[Thm]{Remark}
\newenvironment{customthm}[1]
{\innercustomthm}
{\endinnercustomthm}
\newenvironment{customcor}[1]
{\innercustomcor}
{\endinnercustomcor}
\newcommand\barbelow[1]{\stackunder[1.2pt]{$#1$}{\rule{.8ex}{.075ex}}}
\newcommand{\B}{B}
\newcommand{\A}{A}
\newcommand{\J}{J}
\newcommand{\K}{\mathcal{K}}
\newcommand{\D}{D}
\newcommand{\Ch}{D}
\newcommand{\Zh}{\mathcal{Z}}
\newcommand{\E}{E}
\newcommand{\Oh}{\mathcal{O}}
\newcommand{\T}{{\mathbb T}}
\newcommand{\R}{{\mathbb R}}
\newcommand{\N}{{\mathbb N}}
\newcommand{\Z}{{\mathbb Z}}
\newcommand{\C}{{\mathbb C}}
\newcommand{\Q}{{\mathbb Q}}
\newcommand{\aut}{\mathrm{Aut}}
\newcommand{\supp}{\mathrm{supp}}
\newcommand{\eps}{\varepsilon}
\numberwithin{equation}{section}
\newcommand{\map}[0]{\operatorname{Map}}
\newcommand{\id}{\mathrm{id}}
\newcommand{\halpha}{\widehat{\alpha}}
\newcommand{\calpha}{\widehat{\alpha}}
\newcommand{\tih}{\widetilde {h}}
\newcommand\set[1]{\left\{#1\right\}}  
\newcommand\mset[1]{\left\{\!\!\left\{#1\right\}\!\!\right\}}
\newcommand{\CA}[0]{\mathcal{A}} \newcommand{\CB}[0]{\mathcal{B}}
\newcommand{\CC}[0]{\mathcal{C}} \newcommand{\CD}[0]{\mathcal{D}}
 \newcommand{\CF}[0]{\mathcal{F}}
\newcommand{\CG}[0]{\mathcal{G}} \newcommand{\CH}[0]{\mathcal{H}}
\newcommand{\CO}[0]{\mathcal{O}} \newcommand{\CP}[0]{\mathcal{P}}
\newcommand{\CQ}[0]{\mathcal{Q}} 
\newcommand{\CS}[0]{\mathcal{S}} \newcommand{\CT}[0]{\mathcal{T}}
 \newcommand{\CV}[0]{\mathcal{V}}
 \newcommand{\CZ}[0]{\mathcal{Z}}
\newcommand{\Ra}[0]{\Rightarrow}
\newcommand{\La}[0]{\Leftarrow}
\newcommand{\LRa}[0]{\Leftrightarrow}
\newcommand{\quer}[0]{\overline}
\newcommand{\eins}[0]{\mathbf{1}}			
\newcommand{\diag}[0]{\operatorname{diag}}
\newcommand{\ad}[0]{\operatorname{Ad}}
\newcommand{\ev}[0]{\operatorname{ev}}
\newcommand{\fin}[0]{{\subset\!\!\!\subset}}
\newcommand{\diam}[0]{\operatorname{diam}}
\newcommand{\Hom}[0]{\operatorname{Hom}}
\newcommand{\dst}[0]{\displaystyle}
\newcommand{\spp}[0]{\operatorname{supp}}
\newcommand{\lsc}[0]{\operatorname{Lsc}}
\newcommand{\del}[0]{\partial}
\newcommand{\GU}[0]{\CG^{(0)}}
\theoremstyle{definition}
\numberwithin{equation}{Thm}
\begin{document}
\global\long\def\floorstar#1{\lfloor#1\rfloor}
\global\long\def\ceilstar#1{\lceil#1\rceil}	

\global\long\def\B{B}
\global\long\def\A{A}
\global\long\def\J{J}
\global\long\def\K{\mathcal{K}}
\global\long\def\D{D}
\global\long\def\Ch{D}
\global\long\def\Zh{\mathcal{Z}}
\global\long\def\E{E}
\global\long\def\Oh{\mathcal{O}}

\global\long\def\T{{\mathbb{T}}}
\global\long\def\BR{{\mathbb{R}}}
\global\long\def\N{{\mathbb{N}}}
\global\long\def\Z{{\mathbb{Z}}}
\global\long\def\C{{\mathbb{C}}}
\global\long\def\Q{{\mathbb{Q}}}

\global\long\def\aut{\mathrm{Aut}}
\global\long\def\supp{\mathrm{supp}}

\global\long\def\eps{\varepsilon}

\global\long\def\id{\mathrm{id}}

\global\long\def\halpha{\widehat{\alpha}}
\global\long\def\calpha{\widehat{\alpha}}

\global\long\def\tih{\widetilde{h}}

\global\long\def\opFol{\operatorname{F{\o}l}}

\global\long\def\opRange{\operatorname{Range}}

\global\long\def\opIso{\operatorname{Iso}}

\global\long\def\dimnuc{\dim_{\operatorname{nuc}}}

\global\long\def\set#1{\left\{  #1\right\}  }


\global\long\def\mset#1{\left\{  \!\!\left\{  #1\right\}  \!\!\right\}  }

\global\long\def\Ra{\Rightarrow}
\global\long\def\La{\Leftarrow}
\global\long\def\LRa{\Leftrightarrow}

\global\long\def\quer{\overline{}}
\global\long\def\eins{\mathbf{1}}
\global\long\def\diag{\operatorname{diag}}
\global\long\def\ad{\operatorname{Ad}}
\global\long\def\ev{\operatorname{ev}}
\global\long\def\fin{{\subset\!\!\!\subset}}
\global\long\def\diam{\operatorname{diam}}
\global\long\def\Hom{\operatorname{Hom}}
\global\long\def\dst{{\displaystyle }}
\global\long\def\spp{\operatorname{supp}}
\global\long\def\spo{\operatorname{supp}_{o}}
\global\long\def\del{\partial}
\global\long\def\lsc{\operatorname{Lsc}}
\global\long\def\GU{\CG^{(0)}}
\global\long\def\HU{\CH^{(0)}}
\global\long\def\AU{\CA^{(0)}}
\global\long\def\BU{\CB^{(0)}}
\global\long\def\CUU{\CC^{(0)}}
\global\long\def\DU{\CD^{(0)}}
\global\long\def\QU{\CQ^{(0)}}
\global\long\def\TU{\CT^{(0)}}
\global\long\def\CUUU{\CC'{}^{(0)}}

\global\long\def\AUl{(\CA^{l})^{(0)}}
\global\long\def\BUl{(B^{l})^{(0)}}
\global\long\def\HUp{(\CH^{p})^{(0)}}
\global\long\def\sym{\operatorname{Sym}}

\global\long\def\properlength{proper}

\global\long\def\interior#1{#1^{\operatorname{o}}}
	
\author{Xin Ma}
\email{xma1@memphis.edu}
\address{Department of Mathematics,
	University of Memphis,
	Memphis, TN, 38152}

\subjclass[2010]{22A22, 46L35, 37B05, 20E25}
\keywords{}

\date{Oct 19, 2021}

	
\begin{abstract}
Let $\CG$ be a locally compact $\sigma$-compact Hausdorff ample groupoid on a compact space. In this paper, we further examine the (ubiquitous) fiberwise amenability introduced by the author and Jianchao Wu for $\CG$. We define the corresponding concepts of F{\o}lner sequences and Banach densities for $\CG$, based on which, we establish a topological groupoid version of the Ornstein-Weiss quasi-tilling theorem.  This leads to the notion of almost finiteness in measure for ample groupoids as a weaker version of Matui's almost finiteness. As applications, we first show that $C^*_r(\CG)$ has the uniform property $\Gamma$ and thus satisfies the Toms-Winter conjecture when $\CG$ is minimal second countable (topologically) amenable and almost finite in measure.  Then we prove that the topological full group $[[\CG]]$ is always sofic when $\CG$ is second countable minimal and admits a F{\o}lner sequence. This can be used to strengthen one of Matui's result on the commutator subgroup $D[[\CG]]$ when $\CG$ is almost finite. Concrete examples are provided.
\end{abstract}
\maketitle

\section{Introduction}
In \cite{M-W}, the author and Jianchao Wu introduced a new framework including \emph{(ubiquitous) fiberwise amenability} and \emph{almost elementariness} for locally compact Hausdorff \'{e}tale groupoids where the (ubiquitous) fiberwise amenability  is a coarse geometric
property of étale groupoids that is closely related to the existence
of invariant measures on unit spaces and corresponds to the amenability
of the acting group in a transformation groupoid. Almost elementariness, on the other hand, is a finite-dimensional approximation property for groupoids.

In this paper, we further investigate these notions in the setting of locally compact Hausdorff ample \'{e}tale groupoids. We have three main motivation.  First, fiberwise amenability, as a new geometric or combinatoric property for groupoids, is of considerable independent interest. Many natural questions may be asked. For example, for fiberwise amenable groupoids as analogues of amenable groups and their actions,  what is a proper form of F{\o}lner sequences in this context? In addition, the Ornstein-Weiss (quasi-)tilling theorem for amenable groups and their free actions are very useful in study  groups and their actions as well as applicable to other areas such as the structure theory of $C^*$-algebras of crossed products.  See e.g., \cite{C-J-K-M-S-T}, \cite{D} and \cite{D-G}. The  essential way to use Ornstein-Weiss (quasi-)tilling theorem is that the (quasi-)tilling of the underlying space with arbitrary F{\o}lner shapes would yield a good c.p.c order-zero map $\varphi$ from a matrix algebra $M_n(\C)$ to the crossed product $A$. F{\o}lner shapes (and a good quasi-tiling of the shape by other F{\o}lner sets) are used to verify almost commutativity of certain elements in the algebra $A$. The small reminder condition of the quasi-tilling of the underlying space will establish that the part outside the approximation of the quasi-tilling, i.e. $1_A-\varphi(1_n)$, is small in certain senses. Using different interpretation of this ``smallness'', this leads to (tracial) $\CZ$-stability  or uniform property $\Gamma$ results, respectively.

 Motivated by these and the tracial $\CZ$-stability result for groupoid $C^*$-algebras in \cite[Theorem 9.7]{M-W} obtained using fiberwise amenability and almost elementariness,  another natural question is  when and how can we  do Ornstein-Weiss type (quasi)-tilling for (ubiquitous) fiberwise amenable groupoids?  A satisfactory answer to this question will lead to a new finite-dimensional approximation property for groupoids, which is also related to known ones such as almost finiteness and almost elementariness. Therefore this would have potential applications in detecting the structure of groupoid $C^*$-algebras, which enlarges the view because there exists $C^*$-algebras, due to $K$-theoretical obstructions, like unital AF-algebras and the Jiang-Su algebra $\CZ$,  that fail to be written in the form of crossed product but can be written as groupoid $C^*$-algebras.

In this paper, we plan to discuss these questions. Developing such a theory surrounding fiberwise amenability on general locally compact Hausdorff \'{e}tale groupoids is much more challenging than on groups and group actions.  The main difficulty comes from the fiberwise nature of groupoids, which implies that the dynamics of the groupoid on the unit space is not global homogeneous in the sense that all source fibers are geometrically same, even \'{e}taleness could provide the local homogeneity. Unlike the classical cases, the lack of the global homogeneity makes the dynamics of the groupoid on its unit space somehow ``blind'' so that one may define ``strange'' F{\o}lner sequences with too many overlaps and thus become useless, in which case,  the Ornstein-Weiss argument totally fails. This happens even for transformation groupoids (see Example \ref{eg: ill-behaved density}).  We thus try to characterize when groupoid admits such a good (quasi)-tilling.

We refer to \emph{almost finiteness in measure} for the desired property of existence of  Ornstein-Weiss quasi-tillings with arbitrary F{\o}lner fiberwise shapes in the groupoid (see Definition \ref{defn: af in measure}).  This name is directly borrowed from \cite{D-G} because in \cite{D-G} it describes the same phenomenon for free topological dynamical systems of amenable groups. On the other hand, our almost finiteness in measure is a weaker version of Matui's almost finiteness (recorded as Definition \ref{defn: almost finiteness}), which is originally designed for ample groupoids in \cite{Matui}. In this context, We obtain an almost characterization on when the groupoid is almost finite in measure as follows. First if a principal groupoid is almost finite in measure then it always admits ``good'' F{\o}lner sequences. See Definition \ref{defn: folner sequence for groupoids} for groupoid F{\o}lner sequences.
\begin{description}
	\item[Fact] (Proposition \ref{prop:almost finite in measure admits good Folner sequence}) Let $\CG$ be a locally compact Hausdorff \'{e}tale ample principal groupoid  with a compact unit space, which is almost finite in measure and $\{\epsilon_n: n\in\N\}$ a decreasing sequence such that $\lim_{n\to \infty}\epsilon_n=0$. Then $\CG$ admits a F{\o}lner sequence $\{S_n: n\in\N\}$ in which each $S_n$ is $(1, \epsilon_n)$-good in the sense of Definition \ref{defn: good folner set}.  
\end{description}
The following is a partial converse of the fact as our first main result, which asks for an additional conditions for F{\o}lner sequences.

 \begin{customthm}{A}(Theorem \ref{thm: almost finite in measre})
 	Let $\CG$ be a locally compact Hausdorff \'{e}tale ample groupoid with a compact unit space. Let $M\in \N$ and $\{\epsilon_n: n\in\N\}$ be a decreasing sequence such that $\lim_{n\to \infty}\epsilon_n=0$. Suppose $\CG$ admits a F{\o}lner sequence $\CS=\{S_n: n\in\N\}$ in which each $S_n$ is $(M, \epsilon_n)$-good and has $1$-controlled height in the sense of Definition \ref{defn: control length}.  Then $\CG$ is almost finite in measure.
 \end{customthm}
So far it is still unknown to the author whether one can drop the  ``$1$-controlled height''  condition for each $S_n$ in $\CS$ to obtain a full characterization. It is easy to see that all transformation groupoid of actions of amenable groups on the Cantor set naturally admits a F{\o}lner sequence satisfying the assumption of Theorem A. In addition, we can see in Remark \ref{rmk: 1-controlled height} that every minimal ample groupoid with a unit $u\in \GU$ such that $\CG_u^u=\{u\}$ admits a F{\o}lner sequence $\CS$ in which each $S_n$ has $1$-controlled height but in general these $S_n$ are not $(M, \epsilon_n)$-good. Nevertheless, see Example \ref{eg: partial dynamical system} that groupoids of certain partial dynamical systems admits this kind of nice F{\o}lner sequences and thus are almost finite in measure.

The second motivation is on applications to $C^*$-algebras, motivated by results in \cite{D-G} and \cite{M-W}, one can relate the almost finiteness in measure to the structure theory of $C^*_r(\CG)$ by establishing the following result, i.e., Theorem B. We remark that, similar to the situation in \cite{M-W}, one still cannot apply the ideas in \cite{D-G} to prove this theorem. The main obstruction is that, in a general fiberwise amenable groupoid $\CG$, so far there has been no topological tiling results for any  compact ``F{\o}lner'' set that is applicable to general groupoids. However,  we remark that it was developed a version in \cite{I-W-Z} that is applicable to certain tilling groupoids. Nevertheless, the methods developed in \cite{M-W} called \emph{extendability} and almost elementariness actually are good enough here to construct an c.p.c order-zero map from matrix algebras to the groupoid algebras. Instead of F{\o}lner conditions, the extendability would  create ``extra room'' so that the almost commutativity of certain elements is obtained. Then combining \cite[Theorem 5.6]{C-E-T-W}, one has the following result.

\begin{customthm}{B}(Theorem \ref{thm: uniform property Gamma} and Corollary \ref{cor: tw conjecture})
	Let $\CG$ be a second countable minimal principal ample groupoid  with a compact unit space, which is also almost finite in measure. Then its reduced $C^*$-algebra $C^*_r(\CG)$ has the uniform property $\Gamma$. If $\CG$ is additionally assumed to be (topologically) amenable, then $C^*_r(\CG)$ satisfies the Toms-Winter conjecture.
\end{customthm}

We remark that, unlike group action case in \cite{D-G}, almost finiteness in measure does not necessarily implies that the groupoid $\CG$ is (topologically) amenable. See counterexamples in \cite{A-B-B-L} and \cite{E}. Our final motivation is to apply the fiberwise amenability in studying topological full groups. Since all discrete amenable groups are sofic, it is reasonable to conjecture that the topological full groups of fiberwise amenable groupoids are sofic. Indeed, using groupoid F{\o}lner sequences, we prove the following result.

\begin{customthm}{C}(Theorem \ref{thm: sofic})
	Let $\CG$ be a locally compact Hausdorff minimal second countable ample groupoid  with a compact unit space. Suppose $\CG$ admits a F{\o}lner sequence. Then $[[\CG]]$ is sofic.
	\end{customthm}

We will see in Proposition \ref{prop: minimal groupoid normal folner} and \ref{prop: exist folner sequence} that if $\CG$ is   fiberwise amenable, minimal and satisfies a very weak freeness condition that there exists a $u\in \GU$ such that $\CG^u_u=\{u\}$, then it naturally admits a F{\o}lner sequence. So we have the following by combining \cite{Matui2} and \cite{Ne}.

\begin{customcor}{D}(Corollary \ref{cor 1} and \ref{cor 2})
	Let $\CG$ be a locally compact Hausdorff minimal second countable ample groupoid  with a compact unit space . 
\begin{enumerate}
	\item 	Suppose $\CG$ is fiberwise amenable and there is a $u\in \GU$ such that $\CG^u_u=\{u\}$. Then $[[\CG]]$ is sofic. In particular,  if $\CG$ is topological principal then the alternating group $A(\CG)$ introduced by Nekrashevych in \cite{Ne} is simple and sofic. 
	\item If $\CG$ is  almost finite then the commutator group $D[[\CG]]$ of $[[\CG]]$ is simple and sofic.  
\end{enumerate}
\end{customcor}

We remark that Matui in \cite{Matui2} showed that  $D[[\CG]]$ is simple when $\CG$ is minimal and almost finite. Thus, our Corollary D strengthens the Matui's result.

\section{Preliminaries}
In this section we recall some basic backgrounds. We refer to \cite{Renault} and \cite{Sims} for definitions of groupoids and their $C^*$-algebras. Using the terminology in \cite{Sims}, we denote by $\GU$ the  \textit{unit space} of $\CG$. We write \emph{source} and \emph{range} maps $s,r:\CG\rightarrow\GU$, respectively and they are defined by $s(\gamma)=\gamma^{-1}\gamma$ and $r(\gamma)=\gamma\gamma^{-1}$ for $\gamma\in\CG$.
When a groupoid $\CG$ is endowed with a locally compact Hausdorff
topology under which the product and inverse maps are continuous,
the groupoid $\CG$ is called a locally compact Hausdorff groupoid.
A locally compact Hausdorff groupoid $\CG$ is called \textit{étale}
if the range map $r$ is a local homeomorphism from $\CG$ to itself,
which means for any $\gamma\in\CG$ there is an open neighborhood
$U$ of $\gamma$ such that $r(U)$ is open and $r|_{U}$ is a homeomorphism.
A set $B$ is called a \textit{bisection}
if there is an open set $U$ in $\CG$ such that $B\subset U$ and
the restriction of the source map $s|_{U}:U\rightarrow s(U)$ and
the range map $r|_{U}:U\rightarrow r(U)$ on $U$ are both homeomorphisms
onto open subsets of $\GU$. It is not hard to see a locally compact
Hausdorff groupoid is étale if and only if its topology has a basis
consisting of open bisections. We say a locally compact Hausdorff
étale groupoid $\CG$ is \textit{ample} if its topology has a basis
consisting of compact open bisections.
\begin{Exl}
	\label{exa:transformation-groupoid}Let $X$ be a locally compact
	Hausdorff space and $\Gamma$ be a discrete group. Then any action
	$\Gamma\curvearrowright X$ by homeomorphisms induces a locally compact
	Hausdorff étale groupoid 
	\[
	X\rtimes\Gamma\coloneqq\{(\gamma x,\gamma,x):\gamma\in\Gamma,x\in X\}
	\]
	equipped with the relative topology as a subset of $X\times\Gamma\times X$.
	In addition, $(\gamma x,\gamma,x)$ and $(\beta y,\beta,y)$ are composable
	only if $\beta y=x$ and 
	\[
	(\gamma x,\gamma,x)(\beta y,\beta,y)=(\gamma\beta y,\gamma\beta y,y).
	\]
	One also defines $(\gamma x,\gamma,x)^{-1}=(x,\gamma^{-1},\gamma x)$
	and announces that $\GU\coloneqq\{(x,e_{\Gamma},x):x\in X\}$. It
	is not hard to verify that $s(\gamma x,\gamma,x)=x$ and $r(\gamma x,\gamma,x)=\gamma x$.
	The groupoid $X\rtimes\Gamma$ is called a \textit{transformation
		groupoid}. 
\end{Exl}

For any set $D\subset\GU$, Denote by 
\[
\CG_{D}\coloneqq\{\gamma\in\CG:s(\gamma)\in D\},\ \CG^{D}\coloneqq\{\gamma\in\CG:r(\gamma)\in D\},\ \text{and}\ \ \CG_{D}^{D}\coloneqq\CG^{D}\cap\CG_{D}.
\]
For the singleton case $D=\{u\}$, we write $\CG_{u}$, $\CG^{u}$
and $\CG_{u}^{u}$ instead for simplicity. In this situation, we call
$\CG_{u}$ a \textit{source fiber} and $\CG^{u}$ a \textit{range
	fiber}. In addition, each $\CG_{u}^{u}$ is a group, which is called
the \textit{isotropy} at $u$. We also denote by \[\opIso(\CG)=\bigcup_{u\in \GU}\CG^u_u=\{x\in \CG: s(x)=r(x)\}\] the isotropy of the groupoid $\CG$. We say a groupoid $\CG$ is \textit{principal}
if $\opIso(\CG)=\GU$. A groupoid $\CG$ is called \textit{topologically
	principal} if the set $\{u\in\GU:\CG_{u}^{u}=\{u\}\}$ is dense in
$\GU$. The groupoid $\CG$ is also said to be \textit{effective} if $\opIso(\CG)^o=\GU$. Recall that effectiveness is equivalent to topological principalness  if $\CG$ is second countable (see \cite[Lemma 4.2.3]{Sims}). A subset $D$ in $\GU$ is called $\CG$-\textit{invariant}
if $r(\CG D)=D$, which is equivalent to the condition $\CG^{D}=\CG_{D}$.
Note that $\CG|_{D}\coloneqq\CG_{D}^{D}$ is a subgroupoid of $\CG$
with the unit space $D$ if $D$ is a $\CG$-invariant set in $\GU$.
A groupoid $\CG$ is called \textit{minimal} if there are no proper
non-trivial closed $\CG$-invariant subsets in $\GU$.

The following definition of \emph{mutisections} was introduced by
Nekrashevych in \cite[Definition 3.1]{Ne}
\begin{defn}
	\label{5.6} A finite set of bisections $\CT=\{C_{i,j}:i,j\in F\}$
	with a finite index set $F$ is called a \emph{multisection} if it
	satisfies 
	\begin{enumerate}
		\item $C_{i,j}C_{j,k}=C_{i,k}$ for $i,j,k\in F$; 
		\item \{$C_{i,i}:i\in F\}$ is a disjoint family of subsets of $\CG^{(0)}$
		. 
	\end{enumerate}
	We call all $C_{i,i}$ the \text{levels} of the multisection $\CT$.
	All $C_{i,j}$ ($i\neq j$) are called \text{ladders} of the multisection
	$\CT$. 
\end{defn}
We say a multisection $\CT=\{C_{i,j}:i,j\in F\}$ \emph{open} (\emph{compact, closed})
if all bisections $C_{i,j}$ are open (compact, closed). In addition, we call
a finite disjoint family of multisections $\CC=\{\CT_{l}:l\in I\}$
a \emph{castle}, where $I$ is a finite index set. If all multisections
in $\CC$ are open (closed) then we say the castle $\CC$ is open
(closed). We also explicitly write $\CC=\{C_{i,j}^{l}:i,j\in F_{l},l\in I\}$,
which satisfies the following 
\begin{enumerate}[label=(\roman*)]
	\item $\{C_{i,j}^{l}:i,j\in F_{l}\}$ is a multisection; 
	\item $C_{i,j}^{l}C_{i',j'}^{l'}=\emptyset$ if $l\neq l'$. 
\end{enumerate}
Let $\CC=\{C_{i,j}^{l}:i,j\in F_{l},l\in I\}$ be a castle. Any certain
level in a multisection in $\CC$ is usually referred to as a $\CC$-level.
Analogously, any ladder in in a multisection in $\CC$ is usually
referred as a $\CC$-ladder. 
\begin{rmk}\label{rmk: simplify notation}
We remark that the disjoint union 
$\CH_{\CC}=\bigcup\CC=\bigsqcup_{l\in I}\bigsqcup_{i,j\in F_{l}}C_{i,j}^{l}$
of bisections in $\CC$ is an elementary groupoid. From this viewpoint, as in \cite{M-W}, we denote by $\CC^{(0)}$ the family $\{C_{i,i}^{l}:i\in F_{l},l\in I\}$ of all $\CC$-levels. To simplify the notation, we also write $\CC u$ for the fiber $\CH_\CC u$ for any $u\in \bigcup\CUU$.
\end{rmk}

\begin{defn}\label{defn: transversal}
 For multisections inside $\CC$, we usually denoted by $\CC_l=\{C_{i,j}^{l}:i,j\in F_{l}\}$ for each index $l\in I$. For each $l\in I$, choose one $i_l\in I$. 
We say the set $T=\bigsqcup_{l\in I}C^l_{i_l, i_l}$ a \textit{transversal} of $\CC$, which satisfies that $\bigsqcup_{l\in I}\{r(\CC u): u\in T\}=\CC^{(0)}$.
\end{defn}
Each $\CC$-ladder $C_{i,j}^{l}$ in $\CC_l$ for $i\neq j$
is also called a $\CC_l$-ladder and any $\CC$-level $C_{i,i}^{l}$
is also referred as a $\CC_l$-level.  Let $\CC$ and $\CD$ be two castles, we say $\CC$ is \textit{sub-castle} of $\CD$ if $\CC\subset \CD$. The following proposition is a natural analogue of  free  actions on zero-dimensional compact Hausdorff spaces.

\begin{prop}\label{build-tower}
Let $\CG$ be a groupoid. Then for any compact set $K$ in $\CG$ such that $r|_{Kv}$ is injective for any $v\in s(K)$ and a $u\in s(K)$, there is a  open multisection $\CC= \{C_{i,j}: i, j\in F\}$ and an $i_u\in F$ such that $u\in C_{i_u, i_u}$ and $\{u\}\cup Ku=\bigsqcup_{i\in F}C_{i, i_u}u$. If $\CG$ is additionally assumed to be ample, one can make $\CC$ compact open.
\end{prop}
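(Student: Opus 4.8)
The plan is to first reduce the statement to a finite combinatorial problem, then build the ladders of the desired multisection over a single common level, and finally extend to the full multisection by composition. Since $\CG$ is \'{e}tale, every source fiber $\CG_u = s^{-1}(u)$ is discrete, so $Ku = K \cap \CG_u$ is a compact discrete set and hence finite. I would write $\{u\} \cup Ku = \{g_0, g_1, \dots, g_m\}$ with $g_0 = u$, set $F = \{0, 1, \dots, m\}$, and take $i_u = 0$. The goal is then to produce open bisections $C_{i,0}$ (the ladders into the $0$-th level) with $C_{0,0} \subseteq \GU$, $s(C_{i,0}) = C_{0,0}$ for every $i$, pairwise disjoint ranges $r(C_{i,0})$, and $C_{i,0}\,u = \{g_i\}$. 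Once these are available the remaining entries are forced: one sets $C_{i,j} := C_{i,0}\,(C_{j,0})^{-1}$.

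To build the ladders I would choose, for each $i$, an open bisection $B_i \ni g_i$, taking $B_0$ to be an open subset of $\GU$ containing $u$. Their images under $r$ are $r(g_0) = u$ and $r(k)$ for $k \in Ku$; by injectivity of $r|_{Ku}$ (together with $r(g_0)=u$) the points $r(g_0), \dots, r(g_m)$ are pairwise distinct points of $\GU$. Now let $W$ be an open neighbourhood of $u$ in $\GU$ with $W \subseteq \bigcap_i s(B_i)$, and put $C_{i,0} := B_i \cap s^{-1}(W)$, so that $s(C_{i,0}) = W$ for every $i$, and in particular $C_{0,0} = W$. The crucial point is that the section $v \mapsto r\big(B_i \cap s^{-1}(v)\big)$ is single-valued (as $B_i$ is a bisection) and continuous, and its values at $u$ are already separated; hence, by the Hausdorff property, after shrinking $W$ the open sets $r(C_{i,0})$ become pairwise disjoint. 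Finally $C_{i,0}\,u = \{g_i\}$ since $B_i \ni g_i$ and $s|_{B_i}$ is injective.

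It then remains to check that $\CC = \{C_{i,j}\}$ is a multisection. The levels $C_{i,i} = C_{i,0}\,(C_{i,0})^{-1} = r(C_{i,0})$ lie in $\GU$ and are pairwise disjoint by the previous paragraph, which gives axiom (2). For axiom (1), the common-source normalisation is exactly what makes the cocycle identity hold: since $(C_{j,0})^{-1} C_{j,0} = s(C_{j,0}) = W = C_{0,0}$ and $C_{i,0}\,C_{0,0} = C_{i,0}$, one computes $C_{i,j}\,C_{j,k} = C_{i,0}\big((C_{j,0})^{-1} C_{j,0}\big)(C_{k,0})^{-1} = C_{i,0}\,(C_{k,0})^{-1} = C_{i,k}$, and products of bisections in an \'{e}tale groupoid are again bisections, so each $C_{i,j}$ is an open bisection. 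Taking fibers over $u$ yields $\bigsqcup_i C_{i,0}\,u = \{g_0, \dots, g_m\} = \{u\} \cup Ku$, as required.

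For the ample case I would run the identical argument, choosing the $B_i$ and $W$ from the basis of compact open bisections; then each $C_{i,0}$ is compact open, and since a product of compact open bisections is again compact open, the whole multisection $\CC$ is compact open. I expect the main obstacle to be the middle step: arranging a single common level $W = C_{0,0}$ as the source of all ladders while \emph{simultaneously} forcing the ranges to be disjoint. It is precisely this normalisation that makes the composition law $C_{i,j}C_{j,k} = C_{i,k}$ hold, and it is precisely the injectivity of $r|_{Ku}$ that allows the Hausdorff separation of the range points at $u$ on which the disjointness of the levels depends.
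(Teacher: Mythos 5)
Your proof is correct and follows essentially the same route as the paper's: enumerate $\{u\}\cup Ku$, cover each point by an open (resp.\ compact open) bisection, cut all of them down to a common source neighbourhood of $u$ to form the ladders $C_{i,i_u}$, and generate the remaining entries by $C_{i,j}=C_{i,i_u}(C_{j,i_u})^{-1}$. If anything you are more careful than the paper's terse argument, which does not explicitly record the shrinking of the common level needed to make the ranges $r(C_{i,i_u})$ --- that is, the levels $C_{i,i}$ --- pairwise disjoint, as the multisection axioms require.
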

\begin{proof}
Enumerate $Ku$ by $\{x_1, \dots, x_n\}$ and choose an open bisection $U_i$ such that $x_i\in U_i$. Now define $C_{i_u, i_u}=\bigcap_{i=1}^n s(U_i)$ and $F=\{1,\dots, n\}\cup \{i_u\}$. In addition, we announce  that $C_{i, i_u}=U_iC_{i_u, i_u}$  $C^{i_u, i}=C_{i_u, i}^{-1}$ for each $1\leq i\leq n$ and $C_{i, j}=C_{i, i_u}C_{i_u, j}$ for each $i, j\in F$. Then by our definition, one has $\CC=\{C_{i, j}: i, j\in F\}$ is an open multisection and $\{u\}\cup Ku=\{x_1,\dots, x_n\}=\bigsqcup_{i\in F}C_{i, i_u}u$. If $\CG$ is ample, one can make each $U_i$ to be compact open in the first place and then $\CC$ is a compact open multisection.
\end{proof}

It is well known that there is a $C^{*}$-algebraic embedding $\iota:C_{0}(\GU)\rightarrow C_{r}^{*}(\CG)$.
On the other hand, $E_{0}:C_{c}(\CG)\rightarrow C_{c}(\GU)$ defined
by $E_{0}(a)=a|_{\GU}$ extends to a faithful canonical conditional
expectation $E:C_{r}^{*}(\CG)\rightarrow C_{0}(\GU)$ satisfying $E(\iota(f))=f$
for any $f\in C_{0}(\GU)$ and $E(\iota(f)a\iota(g))=fE(a)g$ for
any $a\in C_{r}^{*}(\CG)$ and $f,g\in C_{0}(\GU)$.

Let $\CG$ be a locally compact Hausdorff étale groupoid. Suppose
$U$ is an open bisection and $f\in C_{c}(\CG)_{+}$ such that $\supp(f)\subset U$.
Define functions $s(f),r(f)\in C_{0}(\GU)$ by $s(f)(s(\gamma))=f(\gamma)$
and $r(f)(r(\gamma))=f(\gamma)$ for $\gamma\in\supp(f)$. Since $U$
is a bisection, so is $\supp(f)$. Then the functions $s(f)$ and
$r(f)$ are well-defined functions supported on $s(\supp(f))$ and
$r(\supp(f))$, respectively. Note that $s(f)=(f^{*}*f)^{1/2}$ and
$r(f)=(f*f^{*})^{1/2}$.

Finally, throughout the paper, we write $B\sqcup C$ to indicate that
the union of sets $B$ and $C$ is a disjoint union. In addition,
we denote by $\bigsqcup_{i\in I}B_{i}$ for the disjoint union of
the family $\{B_{i}:i\in I\}$. 

\section{Coarse geometry on groupoids and fiberwise amenability}
In this section, we recall some backgrounds on coarse geometry of $\sigma$-compact groupoids introduced by the author and Jianchao Wu in \cite{M-W}. To do this, one starts with equipping each $\sigma$-compact groupoid with a length function, which induces a metric on the groupoid. Then the groupoid can be viewed  as an extended metric space so that it is possible to discuss geometric properties of the groupoid such as amenability in the sense of Block and Weinberger in \cite{B-W}. This approach, in the same spirit of geometric group theory, seems to lead to a geometric groupoid theory.  For the convenience of readers, we list several definitions and results established in \cite{M-W} for use in the following sections. 
Let $(X,d)$ be
a metric space equipped with the metric $d$. We denote by $B_{d}(x,R)$
the open ball $B_{d}(x,R)=\{y\in X:d(x,y)<R\}$ and by $\bar{B}_{d}(x,R)$
the closed ball $\bar{B}_{d}(x,R)=\{y\in X:d(x,y)\leq R\}$. Let $A$
be a subset of $X$. We write $B_{d}(A,R)$ and $\bar{B}_{d}(A,R)$
for analogous meaning. If the metric is clear, we write $B(A,R)$
and $\bar{B}(A,R)$ instead for simplification. All groupoids in this section are locally compact Hausdorff \'{e}tale and have a compact unit space.
\subsection{Metrics on $\sigma$-compact groupoids}\label{sub3.1}
\begin{defn}(\cite[Definition 4.1]{M-W})
	\label{def:invariant-fiberwise-extended-metric}An extended metric
	on a groupoid $\CG$ is \end{defn}
\begin{itemize}
	\item \emph{invariant} (or, more precisely, \emph{right-invariant}) if,
	for any $x,y,z\in\CG$ with $s(x)=s(y)=r(z)$, we have $\rho(x,y)=\rho(xz,yz)$; 
	\item \emph{fiberwise} (or, more precisely, \emph{source-fiberwise}) if,
	for any $x,y\in\CG$, we have $\rho(x,y)=\infty$ if and only if $s(x)\neq s(y)$.
\end{itemize}
Like in the case of groups,  invariant metrics can be encoded by length functions. 
\begin{defn}(\cite[Definition 4.2]{M-W})
	\label{def:length-function}Recall a length function on a groupoid
	$\CG$ is a function $\ell:\CG\rightarrow[0,\infty)$ satisfying,
	for any $x,y\in\CG$, 
	\begin{enumerate}[label=(\roman*)]
		\item  $\ell(x)=0$ if and only if $x\in\CG^{(0)}$, 
		\item (symmetricity) $\ell(x)=\ell(x^{-1})$, and 
		\item (subadditivity) $\ell(xy)\leq\ell(x)+\ell(y)$ if $x$ and $y$ are
		composable in $\CG$. 
	\end{enumerate}
\end{defn}

It is not hard to see that there is a one-to-one correspondence 
between length functions and invariant fiberwise extended metrics by the following construction.
Given any length function $\ell$ on $\CG$, we associate
an extended metric $\rho_{\ell}$ by declaring, for $x,y\in\CG$,
\[
\rho_{\ell}(x,y)=\begin{cases}
\ell(xy^{-1}), & s(x)=s(y)\\
\infty, & s(x)\not=s(y)
\end{cases}.
\]
On the other hand, given any invariant fiberwise extended metric $\rho$
on $\CG$, we associate a function 
\[
\ell_{\rho}:\CG\to[0,\infty),\quad g\mapsto\rho(g,s(g)),
\]
which does not take the value $\infty$ since $\rho$ is fiberwise. 

One may also wish length functions compatible  with the topology on the groupoid. We begin with the following definition.

\begin{defn}(\cite[Definition 4.4]{M-W})
	\label{def:coarse-length-function}Let $\ell:\CG\to[0,\infty)$ be
	a length function on an étale groupoid $\CG$. For any subset $K\subseteq\CG$,
	we write 
	\[
	\overline{\ell}(K)=\sup_{x\in K}\ell(x).
	\]
	We say $\ell$ is 
	\begin{itemize}
		\item \emph{coarse} if for any $K\subset\CG\setminus\CG^{(0)}$, one has  $\overline{\ell}(K)<\infty$ if and only if $K$ is precompact.
		\item \emph{continuous }if it is a continuous function with regard to the
		topology of $\CG$. 
	\end{itemize}
	Two length functions $\ell_{1},\ell_{2}$ are said to be \emph{coarsely
		equivalent} if for any $r>0$, we have 
	\[
	\sup\left\{ \ell_{1}(x):\ell_{2}(x)\leq r\right\} <\infty\quad\mbox{and}\quad\sup\left\{ \ell_{2}(x):\ell_{1}(x)\leq r\right\} <\infty.
	\]
\end{defn}

The following was established in \cite{M-W}.

\begin{thm}(\cite[Theorem 4.10, Remark 4.11]{M-W})
	\label{thm:coarse-length-functions}Up to coarse equivalence, any
	$\sigma$-compact groupoid $\CG$ has a unique coarse continuous length function $\ell$. If $\CG$ is also ample, the coarse length function $\ell$ can be chosen to be locally constant.
\end{thm}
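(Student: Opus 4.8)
The plan is to obtain existence by adapting the Birkhoff--Kakutani/Frink metrization scheme from topological groups to this fiberwise setting, and to deduce uniqueness up to coarse equivalence almost formally from the definition of coarseness together with the compactness of $\GU$.

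\emph{Existence.} Since $\CG$ is étale its unit space $\GU$ is open, and by hypothesis it is also compact. Using $\sigma$-compactness and local compactness I would first fix a symmetric compact exhaustion $\GU\subseteq K_0\subseteq K_1\subseteq\cdots$ with $\bigcup_n K_n=\CG$. Exploiting that a product of two precompact sets is again precompact (the set of composable pairs $\CG^{(2)}$ is closed and multiplication is continuous), I would then build inductively an increasing sequence of open, symmetric, precompact sets $V_0=\GU\subseteq V_1\subseteq V_2\subseteq\cdots$ with $\overline{V_n}\subseteq V_{n+1}$, $V_n\cdot V_n\subseteq V_{n+1}$ and $\bigcup_n V_n=\CG$: at stage $n$ one takes $V_{n+1}$ to be a precompact open symmetric neighborhood of the precompact set $\overline{V_n}\cup V_nV_n\cup K_n$. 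The crude gauge $\ell_0(x)=\inf\{n\in\N:x\in V_n\}$ is then symmetric, vanishes exactly on $V_0=\GU$, has precompact sublevel sets $\{\ell_0\le n\}=V_n$, and is quasi-subadditive, $\ell_0(xy)\le\max\{\ell_0(x),\ell_0(y)\}+1$, because $V_nV_n\subseteq V_{n+1}$.

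To turn $\ell_0$ into an honest length function I would pass to the chained gauge $\ell(x)=\inf\sum_{i=1}^{k}\ell_0(x_i)$, the infimum running over all composable factorizations $x=x_1\cdots x_k$. This is automatically subadditive, symmetric, and still vanishes precisely on $\GU$, while a Frink-type comparison lemma bounds $\ell$ and $\ell_0$ by one another up to a multiplicative constant; hence $\ell$ again has precompact sublevel sets and is coarse in the required sense. Continuity is the one genuinely delicate point. In the general case I would first replace $\ell_0$ by a comparable \emph{continuous} gauge, obtained for instance by summing Urysohn functions $h_n$ subordinate to the nesting $\overline{V_n}\subseteq V_{n+1}$ (the sum is locally finite, hence continuous), and only then chain, using right-invariance as in Birkhoff--Kakutani to see that the chained gauge stays continuous. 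In the ample case this is avoided entirely: choosing each $V_n$ to be compact open makes $\ell_0$, and therefore $\ell$, locally constant, which gives the last assertion of the theorem.

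\emph{Uniqueness and the main obstacle.} For uniqueness, given coarse continuous length functions $\ell_1,\ell_2$ and $r>0$ I would bound $\sup\{\ell_1(x):\ell_2(x)\le r\}$ (the other direction being symmetric). The set $K=\{x:\ell_2(x)\le r\}$ meets $\CG\setminus\GU$ in a set on which $\ell_2\le r$, which is therefore precompact by coarseness of $\ell_2$; adjoining the compact set $\GU$ leaves $K$ precompact, and coarseness of $\ell_1$ then forces $\ell_1$ to be bounded on $K$. Note this half uses only coarseness and the compactness of $\GU$, not continuity. The real work, and the main obstacle, is thus entirely on the existence side: securing subadditivity, continuity and properness simultaneously. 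The Frink/chaining comparison and, in the non-ample case, the continuity (and comparability) of the chained gauge are the two points requiring care, whereas the ample refinement drops out by working with clopen $V_n$.
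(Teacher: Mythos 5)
First, a point of reference: this theorem is not proved in the present paper at all --- it is imported verbatim from \cite[Theorem 4.10, Remark 4.11]{M-W}, so the only comparison available is with the construction in that reference. Your overall architecture (a symmetric precompact open exhaustion $V_n$ with $V_nV_n\subseteq V_{n+1}$, a crude gauge $\ell_0$, a chaining step to restore genuine subadditivity, and a Frink-type two-sided comparison to preserve coarseness) is the natural Birkhoff--Kakutani-style construction and is in the same spirit as the one in the cited reference. Your uniqueness argument is complete and correct, and you rightly observe that it uses only coarseness together with compactness of $\GU$; it is worth noting that $\GU$ is clopen in a Hausdorff \'{e}tale groupoid, which is what makes the decomposition of a sublevel set into its unit and non-unit parts interact cleanly with the definition of coarseness (which is a biconditional only for subsets of $\CG\setminus\GU$).

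The genuine gap is in the continuity of the chained gauge. You propose to obtain it ``using right-invariance as in Birkhoff--Kakutani,'' but that mechanism does not transfer to groupoids. In a group, right-invariance yields $|\ell(x)-\ell(y)|\le \ell(xy^{-1})\le h(xy^{-1})$, and continuity of $h$ at the identity finishes the job because $xy^{-1}$ tends to the identity as $x\to y$. In an \'{e}tale groupoid the inequality $|\ell(x)-\ell(y)|\le\ell(xy^{-1})$ is available only when $s(x)=s(y)$, whereas source fibers are discrete; convergence $x\to x_0$ in the topology of $\CG$ happens transversally to the fibers, so right-invariance says nothing about continuity of $\ell$ as a function on the topological space $\CG$. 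One must argue differently: upper semicontinuity of the chained gauge follows by covering a near-optimal factorization $x_0=y_1\cdots y_k$ by open bisections $W_i$ and using that every $x$ in the open set $W_1\cdots W_k$ factors uniquely and continuously through the $W_i$; lower semicontinuity follows from a compactness argument (near-optimal factorizations of $x_n\to x_0$ have a bounded number of non-unit factors once $h\ge 1$ off $\GU$, each factor confined to a precompact sublevel set, so one extracts a convergent subsequence of factorizations and passes to the limit). Alternatively one can avoid chaining altogether by building the exhaustion graded, $V_mV_n\subseteq V_{m+n}$, so that the crude gauge is already subadditive and only needs smoothing. Your observation that in the ample case clopen $V_n$ make everything locally constant, and hence dispose of the continuity issue there, is correct.
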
 

As what we have described above, the invariant fiberwise extended metric on a $\sigma$-compact groupoid $\CG$, induced by the unique-up-to-coarse-equivalence length function,  is called \textit{canonical} extended metric. Such an metric is denoted by $\rho_\CG$, or simply $\rho$. It has been also shown in \cite[Lemma 4.14]{M-W} that metric space $(\CG, \rho_\CG)$ is \textit{uniformly locally finite}\footnote{Some authors call this notion \textit{bounded geometry}} in the sense that for any $R>0$, there is a uniform finite upper bound on the cardinalities of all closed balls with radius $R$, namely, $\sup_{x\in \CG}|\bar{B}(x, R)|<\infty.$
It is fortunate to see that  the coarse structure  for groupoids looks same locally under the canonical metric.  This result was established in \cite[Lemma 5.10]{M-W} and is referred as the \emph{Local Slice Lemma}. The following result is a weak version of the Local Slice Lemma that we will actually use in this paper and it is established by the same construction. 


\begin{lem}\label{3.6}
Let $\CG$ be a $\sigma$-compact groupoid and let $\rho$ be a canonical extended metric on $\CG$ induced by a coarse continuous length function $\ell$. Then for any $u\in \GU$  such that $\CG^u_u=\{u\}$ and $R, \epsilon>0$, there is an $S\in [R, R+\epsilon)$ such that there is an open multisection $\CC=\{C_{i, j}: i, j\in I\}$ and an $i_u\in I$ such that
\begin{enumerate}
	\item $u\in C_{i_u, i_u}$ and
	\item $\bar{B}_\rho(v, S)\subset \bigsqcup_{i\in I}C_{i, i_u}v$ for any $v\in C_{i_u, i_u}$.
\end{enumerate} 
If $\CG$ is additionally ample, one may choose $\CC$ to be compact open and $\ell$ is constant on all $C_{i, j}\in \CC$.
\end{lem}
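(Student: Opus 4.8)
The plan is to reduce everything to Proposition \ref{build-tower}: I feed it the finite set obtained as a closed ball inside the source fiber $\CG_u$, and then I upgrade its conclusion from the single unit $u$ to a whole sub-level neighbourhood of $u$ by a compactness argument that forbids the closed ball from acquiring new points as the base unit moves.

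First I would fix the radius. Since $(\CG,\rho)$ is uniformly locally finite (\cite[Lemma 4.14]{M-W}) the set $\bar{B}_\rho(u,R+\epsilon)$ is finite, and as $\rho(u,\cdot)=\infty$ off the source fiber it is contained in $\CG_u$; concretely $\bar{B}_\rho(u,t)\cap\CG_u=\{x\in\CG_u:\ell(x)\le t\}$ for every $t$, using that $u$ is a unit. The finitely many numbers $\{\ell(x):x\in\bar{B}_\rho(u,R+\epsilon)\cap\CG_u\}$ miss some $S\in[R,R+\epsilon)$, and for this $S$ I set $K_0=\{x\in\CG_u:\ell(x)\le S\}=\{u,x_1,\dots,x_n\}$, a finite subset of $\CG_u$. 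The hypothesis $\CG^u_u=\{u\}$ says precisely that $r$ is injective on $\CG_u$, hence on $K_0$; so Proposition \ref{build-tower}, applied to the compact (finite) set $K=\{x_1,\dots,x_n\}$ with base unit $u$, yields an open multisection $\CC=\{C_{i,j}:i,j\in F\}$ and an index $i_u\in F$ with $u\in C_{i_u,i_u}$ and $\{u\}\cup K=\bigsqcup_{i\in F}C_{i,i_u}u$. This already gives (1) and supplies, for each $i$, an open bisection $U_i\ni x_i$ with $C_{i,i_u}=U_iC_{i_u,i_u}$; in the ample case I take each $U_i$ compact open and, using the local constancy of $\ell$ from Theorem \ref{thm:coarse-length-functions}, shrunk so that $\ell\equiv\ell(x_i)$ on $U_i$.

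For $v$ in the level $L:=C_{i_u,i_u}$ write $y_i(v)$ for the unique point of $C_{i,i_u}$ with source $v$, so $\bigsqcup_{i\in F}C_{i,i_u}v=\{y_i(v):i\in F\}$ and $y_{i_u}(v)=v$. The heart of the proof is to show that, after replacing $L$ by a small enough open sub-neighbourhood of $u$ and restricting all the $C_{i,j}$ accordingly, one has $\bar{B}_\rho(v,S)\subseteq\{y_i(v):i\in F\}$ for every $v\in L$, which is (2). Suppose not: then there are $v_k\to u$ in $L$ and $z_k\in\CG_{v_k}$ with $\ell(z_k)\le S$ and $z_k\ne y_i(v_k)$ for all $i$. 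Since $z_k\ne v_k=y_{i_u}(v_k)$, no $z_k$ is a unit, so $\{z_k\}\subseteq\{x\in\CG\setminus\GU:\ell(x)\le S\}$, a set whose lengths are bounded by $S<\infty$ and which is therefore precompact by coarseness of $\ell$. Passing to a convergent subsequence $z_k\to z$, continuity of $s$ gives $s(z)=u$ and continuity of $\ell$ gives $\ell(z)\le S$, so $z\in K_0$. If $z=u$ then $z_k$ eventually lies in the open set $L\subseteq\GU$, contradicting that the $z_k$ are not units; if $z=x_i$ then $z_k$ eventually lies in the open bisection $U_i$ and, having source $v_k$, must equal $y_i(v_k)$, again a contradiction. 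This establishes (2) on a sufficiently small level.

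Finally, for the ample refinement I shrink $L$ once more to a compact open neighbourhood of $u$: each $C_{i,j}$ is the continuous image $\{y_i(v)y_j(v)^{-1}:v\in L\}$ of $L$, so when $\ell$ is locally constant the finitely many functions $v\mapsto\ell(y_i(v)y_j(v)^{-1})$ are locally constant, and on a small enough clopen sub-level they are simultaneously constant; restricting everything there makes $\ell$ constant on every $C_{i,j}$ while keeping (1) and (2). I expect the third paragraph to be the only delicate point: the sole mechanism that keeps the balls from growing uniformly near $u$ is the coarseness of $\ell$, which I use exactly to trap any putative family of ``extra'' ball points in a precompact set so that it must accumulate at a point of $K_0$ already accounted for by the bisections $U_i$ and by the openness of the level.
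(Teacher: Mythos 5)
Your proposal is correct and runs on the same engine as the paper's proof: choose $S$ so that $\bar B_\rho(u,S)$ is the finite set $\{u,x_1,\dots,x_n\}$, cover it by disjoint open bisections $U_i$, and then use the coarseness of $\ell$ to force the balls $\bar B_\rho(v,S)$ to stay inside $\bigsqcup_i U_i$ for all $v$ in a small enough level around $u$. The only substantive difference is how that last step is executed: the paper defines the level directly as $U=\GU\setminus s\bigl(\ell^{-1}([0,S])\setminus\bigcup_i U_i\bigr)$, where coarseness makes $\ell^{-1}([0,S])$ compact and hence $s$ of the bad set closed, so $U$ is manifestly open and contains $u$; you instead argue by contradiction with a sequence $v_k\to u$ of counterexamples. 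That version needs one repair: the lemma is stated for $\sigma$-compact (not necessarily second countable or metrizable) groupoids, so precompactness of $\{z_k\}$ only yields a convergent subnet, not a convergent subsequence, and the negation of ``(2) holds on some neighbourhood of $u$'' produces a net indexed by neighbourhoods rather than a sequence in the first place. Rewriting your third paragraph with nets (or just adopting the paper's direct open-set formulation) closes this; everything else, including the reduction to Proposition \ref{build-tower} and the ample refinement via local constancy of $\ell$, matches the paper.
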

\begin{proof}
Let $u\in \GU$ such that $\CG^u_u=\{u\}$. First, since $(\CG, \rho)$ is locally finite,  the set $B_\rho(u, R+\epsilon)=\{x\in \CG_u: \ell(x)<R+\epsilon\}$ is finite and therefore \[
	\overline{\ell}\left(B_{\rho}(u,R+\eps)\right)=\max\left\{ \ell(x):x\in B_{\rho}(u,R+\eps)\right\} <R+\eps.
	\]
	Hence we may choose $S\in[R,R+\eps)\cap\left(\overline{\ell}\left(B_{\rho}(u,R+\eps)\right),R+\eps\right)$,
	e.g., 
	\[
	S=\max\left\{ R,\frac{\overline{\ell}\left(B_{\rho}(u,R+\eps)\right)+R+\eps}{2}\right\} ,
	\]
	which guarantees $\bar{B}_{\rho}(u,S)=B_{\rho}(u,R+\eps)$ and thus
	$S>\overline{\ell}\left(\bar{B}_{\rho}(u,S)\right)$. 
	
	Enumerate $\bar{B}_{\rho}(u,S)=\{x_0, x_1,\dots, x_n\}$ in which $x_0=u$. Then since $\CG^u_u=\{u\}$, choose an open bisection $U_i$ such that $x_i\in U_i$ for each $0\leq i\leq n$ and the family $\{r(U_i): 0\leq i\leq n\}$ is disjoint. Now define 
	\[
	L=\ell^{-1}([0,S])\setminus\left(\bigcup_{x\in\bar{B}_{\rho}(u,S)}U_{x}\right)\quad\mbox{and}\quad U=\CG^{(0)}\setminus s(L).
	\]
	Unpacking the definition and using the fact $\bar{B}_{\rho}(v,S)=\ell^{-1}([0,S])\cap s^{-1}(v)$
	for any $v\in\CG^{(0)}$, we have that
	\begin{align*}
	U=\{ v\in\CG^{(0)}:\bar{B}_{\rho}(v,S)\subseteq\bigcup_{x\in\bar{B}_{\rho}(u,S)}U_{x}\}
	\end{align*}
	is an open set. Define bisections $C_{i, 0}=U_iU$ and $C_{i, j}=C_{i, 0}C^{-1}_{j, 0}$ for $0\leq i, j\leq n$ and index set $I=\{0, \dots, n\}$ with $i_u=0$. Then by the definition of $U$, we have desired multisection $\CC$. 
	
	Finally, if $\CG$ is ample, one can shrink $U$ and all $U_i$ if necessary to obtain a compact open multisection $\CC$ such that $\ell$ is constant on all $C_{i, j}\in \CC$ by Theorem \ref{thm:coarse-length-functions}.
\end{proof}

\subsection{Fiberwise amenability}
 Amenability of extended uniformly locally finite spaces was first introduced by Block and Weinberger in \cite{B-W}.  Let $(X,d)$ be a locally finite extended metric space and $A$ be
 a subset of $X$. For any $R>0$ we define the following boundaries
 of $A$:
 \begin{enumerate}[label=(\roman*)]
 	\item \emph{outer $R$-boundary}: $\partial_{R}^{+}A=\{x\in X\setminus A:d(x,A)\leq R\}$;
 	\item \emph{inner $R$-boundary}: $\partial_{R}^{-}A=\{x\in A:d(x,X\setminus A)\leq R\}$; 
 	\item \emph{$R$-boundary}: $\partial_{R}A=\{x\in X:d(x,A)\leq R\ \textrm{and}\ d(x,X\setminus A)\leq R\}$. 
 \end{enumerate}

\begin{defn}
	\label{3.2} \label{def:metric-amenability}Let $(X,d)$ be a extended
	locally finite metric space. 
	\begin{enumerate}[label=(\roman*)]
		\item For $R>0$ and $\epsilon>0$, a finite non-empty set $F\subset X$
		is called $(R,\epsilon)$-F{ø}lner if it satisfies 
		\[
		\frac{|\partial_{R}F|}{|F|}\leq\epsilon.
		\]
		\item The space $(X,d)$ is called \textit{amenable} if, for every $R>0$
		and $\epsilon>0$, there exists an $(R,\epsilon)$-F{ø}lner set. 
	\end{enumerate}
\end{defn}

Note that a countable discrete group is amenable exactly when it is amenable as a metric space in the sense above. However, unlike groups,  a general metric space lacks homogeneity  so that amenability defined above sometimes is too weak to apply. So we need the following stronger version of amenability introduced in \cite{M-W}.

\begin{defn}(\cite[Defnition 3.5]{M-W})
	\label{3.0}\label{def:uniform-metric-amenability} An extended metric
	space $(X,d)$ is called \textit{ubiquitously amenable} (or \emph{ubiquitously
		metrically amenable}) if, for every $R>0$ and $\epsilon>0$, there
	exists an $S>0$ such that for any $x\in X$, there is an $(R,\epsilon)$-Følner
	set $F$ in the ball $\bar{B}(x,S)$. 
\end{defn}

It is not hard to see that a countable discrete group is amenable if and only if it is ubiquitously amenable because and right translation of a F{\o}lner set stays F{\o}lner. Analogously to metric setting, one may define (ubiquitously) amenability for groupoids. The following definitions are also borrowed from \cite{M-W}.

\begin{defn}(\cite[Defnition 5.1]{M-W})
	Let $\CG$ be a groupoid. For any subsets $A,K\subseteq\CG$, we define
	the following boundary sets:
	\begin{enumerate}[label=(\roman*)]
		\item \emph{left outer $K$-boundary}: $\partial_{K}^{+}A=(KA)\setminus A=\{yx\in\CG\setminus A:y\in K,x\in A\}$;
		\item \emph{left inner $K$-boundary}: $\partial_{K}^{-}A=A\cap(K^{-1}(\CG\setminus A))=\{x\in A:yx\in\CG\setminus A\mbox{ for some }y\in K\}$;
		\item \emph{left $K$-boundary}: $\partial_{K}A=\partial_{K}^{+}A\cup\partial_{K}^{-}A$. 
	\end{enumerate}
\end{defn}
Observe that if $A$ as above is contained in a single source fiber,
then $KA$ and all these boundary sets are also contained in this
source fiber. This is the reason for the terminology \emph{fiberwise
	amenability}. 
The following concept is analogous to the metric case, too. 
\begin{defn}(\cite[Defnition 5.3]{M-W})
	\label{def:groupoid-Folner} Let $\CG$ be a locally compact étale
	groupoid. For any subset $K\subseteq\CG$ and $\epsilon>0$, a finite
	non-empty set $F\subset X$ is called $(K,\epsilon)$-F{ø}lner if
	it satisfies 
	\[
	\frac{|\partial_{K}F|}{|F|}\leq\epsilon.
	\]
	\end{defn}
This leads to a natural definition of fiberwise amenability. 
\begin{defn}(\cite[Defnition 5.4]{M-W})
	\label{def:fiberwise-amenable}\label{4.1} Let $\CG$ be a locally
	compact étale groupoid. 
	\begin{enumerate}
		\item We say $\CG$ is \textit{fiberwise amenable} if for any compact subset
		$K$ of $\CG$ and any $\epsilon>0$, there exists a $(K,\epsilon)$-F{ø}lner
		set.
		\item We say $\CG$ is \textit{ubiquitously fiberwise amenable} if and only
		if for any compact subset $K$ of $\CG$ and any $\epsilon>0$, there
		exists a compact subset $L$ of $\CG$ such that for any unit $u\in\CG^{(0)}$,
		there is a $(K,\epsilon)$-F{ø}lner set in $Lu\cup\{u\}$. 
	\end{enumerate}
\end{defn}

On the other hand, we look at the metric space $(\CG, \rho_\CG)$ defined in subsection \ref{sub3.1}. The following theorem shows that (ubiquitously) fiberwise amenability (Definition \ref{def:fiberwise-amenable}) are equivalent to (ubiquitously) amenability of the metric space $(\CG, \rho_\CG)$.

\begin{prop}(\cite[Defnition 5.5]{M-W})
	\label{prop:fiberwise-amenability-metric} Let $\CG$ be a $\sigma$-compact
	locally compact Hausdorff étale groupoid and let $(\CG,\rho)$ be
	the extended metric space induced by a coarse length function $\ell$. 
	\begin{enumerate}
		\item The groupoid $\CG$ is fiberwise amenable if and only if for any compact
		subset $K$ of $\CG$ and any $\epsilon>0$, there exists a nonempty
		finite subset $F$ in $\CG$ satisfying 
		\[
		\frac{|KF|}{|F|}\leq1+\epsilon,
		\]
		if and only if $(\CG,\rho)$ is amenable in the sense of Definition
		\ref{3.2}.. 
		\item The groupoid $\CG$ is ubiquitously fiberwise amenable if and only
		if for any compact subset $K$ of $\CG$ and any $\epsilon>0$, there
		exists a compact subset $L$ of $\CG$ such that for any unit $u\in\CG^{(0)}$,
		there is a nonempty finite subset $F$ in $Lu\cup\{u\}$ satisfying
		\[
		\frac{|KF|}{|F|}\leq1+\epsilon,
		\]
		if and only if $(\CG,\rho)$ is ubiquitously amenable in the sense
		of Definition \ref{3.0}. 
	\end{enumerate}
\end{prop}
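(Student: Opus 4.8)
The plan is to prove both parts by setting up a dictionary between compact subsets $K\subseteq\CG$ and metric radii $R>0$ and then comparing the groupoid boundary $\partial_K F$ with the metric boundary $\partial_R F$; coarseness of $\ell$ is exactly what makes this dictionary work. In one direction, for a compact $K$ I take $R=\overline{\ell}(K)$, which is finite since $K\setminus\CG^{(0)}$ is precompact; in the other, for a radius $R$ I take the sublevel set $K_R:=\ell^{-1}([0,R])$, which is compact because it is closed (continuity of $\ell$), precompact off $\CG^{(0)}$ (coarseness), and $\CG^{(0)}$ is compact. Throughout I will tacitly replace $K$ by $K\cup K^{-1}\cup\CG^{(0)}$, which is harmless because $\ell$ is symmetric and this changes neither $\overline{\ell}(K)$ nor compactness; having $\CG^{(0)}\subseteq K$ ensures $F\subseteq KF$, so that $|KF|=|F|+|\partial_K^+F|$ and the middle condition $|KF|/|F|\le 1+\epsilon$ becomes literally $|\partial_K^+F|/|F|\le\epsilon$.

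The heart of the matter is two boundary inclusions, both immediate from $\rho(x,y)=\ell(xy^{-1})$. First, if $R=\overline{\ell}(K)$ then $\partial_K F\subseteq\partial_R F$: writing $y=kx\in(KF)\setminus F$ gives $\ell(yx^{-1})=\ell(k)\le R$, hence $d(y,F)\le R$, and symmetrically for the inner boundary. Second, if $K\supseteq K_R$ then $\partial_R F\subseteq\partial_K F$: a point $y\notin F$ with $d(y,F)\le R$ has some $f\in F$ with $\ell(yf^{-1})\le R$, whence $yf^{-1}\in K$ and $y\in KF$; the inner boundary is handled the same way, using symmetry of $\ell$ to place the relevant element in $K$. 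These two inclusions yield the equivalence of fiberwise amenability with metric amenability (Definition \ref{3.2}) at once: metric Følner sets for $R=\overline{\ell}(K)$ are groupoid Følner sets for $K$, and groupoid Følner sets for $K_R$ are metric Følner sets for $R$.

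For the equivalence with the condition $|KF|/|F|\le 1+\epsilon$ I compare the full boundary with the outer boundary. One implication is trivial since $\partial_K^+F\subseteq\partial_KF$. For the converse the point is to dominate the inner boundary by the outer one: from $\partial_K^-F\subseteq K^{-1}\partial_K^+F$ together with the uniform local finiteness of $(\CG,\rho)$ (so that $N:=\sup_{x}|\bar{B}_\rho(x,\overline{\ell}(K))|<\infty$) one gets $|K^{-1}y|\le N$ for each $y$ and hence $|\partial_K^-F|\le N|\partial_K^+F|$, so that $|\partial_KF|/|F|\le(1+N)\,|\partial_K^+F|/|F|$. Thus, given $K$ and $\epsilon$, applying the middle condition with the same $K$ but with $\epsilon/(1+N)$ in place of $\epsilon$ produces a genuine $(K,\epsilon)$-Følner set. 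This completes part (1).

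For part (2) I run all of the above \emph{uniformly in the base point}, the only genuinely new ingredient being right-invariance of $\rho$. A Følner set inside $Lu\cup\{u\}$ for a compact $L$ with $\overline{\ell}(L)=S$ is precisely a Følner set inside the ball $\bar{B}_\rho(u,S)$ for the unit $u$, so the localized forms of the boundary inclusions above give the ubiquitous equivalence as long as one only localizes at units. The remaining obstacle, and the only step that is not a formal rewriting of boundaries, is that Definition \ref{3.0} demands Følner sets in $\bar{B}_\rho(x,S)$ for \emph{every} $x\in\CG$, not merely for $x\in\CG^{(0)}$; this is exactly where right-invariance is essential. For $x\in\CG_u$ the map $z\mapsto zx$ carries $\bar{B}_\rho(r(x),S)$ isometrically onto $\bar{B}_\rho(x,S)$ and intertwines left multiplication by $K$, hence sends $K$-boundaries to $K$-boundaries; so a Følner set in the ball around the unit $r(x)$ right-translates to one in $\bar{B}_\rho(x,S)$. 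Therefore the uniform choice of $S$ (resp. $L$) obtained over units propagates to all points, and the ubiquitous equivalences follow exactly as in part (1).
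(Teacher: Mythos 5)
Your argument is correct: the dictionary $K\leftrightarrow R$ via $R=\overline{\ell}(K)$ and $K_R=\ell^{-1}([0,R])$ (using coarseness for finiteness/compactness), the two boundary inclusions, the bound $|\partial_K^-F|\le N|\partial_K^+F|$ from uniform local finiteness, and the right-translation $z\mapsto zx$ to propagate Følner sets from balls around units to balls around arbitrary points are all sound. The paper itself offers no proof of this proposition --- it is quoted from \cite{M-W} --- and your reconstruction follows exactly the standard route that the cited source takes, so there is nothing genuinely different to compare.
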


\begin{eg}\label{eg: amenability for actions}
	Let $\alpha:\Gamma\curvearrowright X$ be an action of a countable
	discrete group $\Gamma$ on a compact Hausdorff space $X$. We denote by $X\rtimes_{\alpha}\Gamma$
	the transformation groupoid of this action $\alpha$. When we equip
	$\Gamma$ with a proper length function $\ell_{\Gamma}$ and $X\rtimes_{\alpha}\Gamma$
	with the induced length function $\ell_{X\rtimes_{\alpha}\Gamma}:(\gamma x,\gamma,x)\mapsto\ell_{\Gamma}(\gamma)$
	each source
	fiber $(X\rtimes_{\alpha}\Gamma)_{x}=\{(\gamma x,\gamma,x):\gamma\in\Gamma\}$,
	for $x\in X$, becomes isometric to $\Gamma$. Therefore $X\rtimes_{\alpha}\Gamma$
	is ubiquitous fiberwise amenable if and only if $\Gamma$ is amenable. 
\end{eg}

As what we have mentioned above (see also \cite[Remark 5.7]{M-W}), ubiquitously fiberwise amenability is in general a more useful concept for us than fiberwise amenability. Nevertheless, we have the following theorem.

\begin{thm}(\cite[Theorem 5.13]{M-W})
	\label{4.01} \label{thm:minimal-fiberwise-amenability}Let $\CG$
	be a $\sigma$-compact groupoid. Suppose
	$\CG$ is minimal. Then $\CG$ is fiberwise amenable if and only
	if it is ubiquitously fiberwise amenable. \end{thm}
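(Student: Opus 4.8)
The plan is to prove the nontrivial implication, that a minimal fiberwise amenable $\CG$ is ubiquitously fiberwise amenable, since the converse is immediate from the definitions (a $(K,\epsilon)$-Følner set contained in $Lu\cup\{u\}$ is in particular a $(K,\epsilon)$-Følner set). I would work throughout with the metric reformulation of Proposition \ref{prop:fiberwise-amenability-metric}, so that ``$(K,\epsilon)$-Følner'' means $|KF|/|F|\le 1+\epsilon$, and I would fix a compact $K$ and $\epsilon>0$ at the outset, enlarging $K$ so that $\GU\subseteq K$. This is harmless, since $\GU$ is compact and a $(K\cup\GU,\epsilon)$-Følner set is $(K,\epsilon)$-Følner, and it guarantees $F\subseteq KF$ for every finite $F$ lying in a single source fiber, a fact I will use repeatedly.

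The engine of the argument is a \emph{local spreading} lemma: if one source fiber $\CG_{v_0}$ contains a $(K,\epsilon)$-Følner set, then there are an open neighborhood $W\ni v_0$ and a compact set $L$ such that every fiber $\CG_v$ with $v\in W$ contains a $(K,\epsilon)$-Følner set $F_v\subseteq Lv$. To prove it I would enumerate a Følner set $F=\{x_1,\dots,x_n\}\subseteq\CG_{v_0}$, note that $KF$ is finite by uniform local finiteness, and cover $KF=\{h_1,\dots,h_m\}$ by pairwise disjoint precompact open bisections $V_j\ni h_j$ (reusing these to cover the points of $F$). On $W_0=\bigcap_j s(V_j)$ the slice maps $v\mapsto h_j^v:=(s|_{V_j})^{-1}(v)$ are defined and injective, producing sets $F_v\subseteq H_v:=\{h_j^v\}$ with $|F_v|=n$ and $|H_v|=m$. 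The crux is to shrink $W_0$ to a neighborhood $W$ on which $KF_v\subseteq H_v$; then $|KF_v|\le m=|KF|\le(1+\epsilon)n=(1+\epsilon)|F_v|$, while $\GU\subseteq K$ gives $F_v\subseteq KF_v$, so $F_v$ is $(K,\epsilon)$-Følner and lies in $Lv$ for $L=\bigcup_i\overline{V_i}$. I expect this containment to be the main obstacle, precisely because $K$ is compact but typically infinite, so one cannot argue bisection-by-bisection over the elements of $K$. I would instead argue by contradiction: if it failed there would be $v_k\to v_0$ and $g_k=y_k x_{i_k}^{v_k}\in KF_{v_k}\setminus H_{v_k}$; passing to a subsequence so that $i_k$ is constant and $y_k\to y\in K$ (compactness of $K$), continuity of multiplication forces $g_k\to yx_i\in KF$, hence $g_k$ eventually enters the bisection containing that limit and therefore equals the corresponding slice point of $H_{v_k}$, a contradiction.

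With the lemma in hand I would combine fiberwise amenability, minimality, and compactness as follows. First, fiberwise amenability yields a $(K,\epsilon)$-Følner set in $\CG$; splitting it into its source-fiber pieces and averaging shows that some single fiber $\CG_u$ already contains one. Applying the lemma at $u$ gives a nonempty open set $W_u$ over which every fiber has a $(K,\epsilon)$-Følner set. Here minimality enters: the saturation of the nonempty open set $W_u$ is a nonempty open invariant subset of $\GU$, hence all of $\GU$, so for every unit $v'$ there is a $z$ with $s(z)=v'$ and $r(z)\in W_u$. Right-invariance of $\rho$ makes right translation by $z$ a fiber isometry satisfying $K(Fz)=(KF)z$, so translating the Følner set in $\CG_{r(z)}$ produces a $(K,\epsilon)$-Følner set in $\CG_{v'}$; thus every source fiber contains one. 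Finally I would apply the local spreading lemma once more, now at each $v'\in\GU$, to obtain neighborhoods $W_{v'}$ and compact sets $L_{v'}$; extracting a finite subcover $W_{v'_1},\dots,W_{v'_m}$ of the compact space $\GU$ and setting $L=\bigcup_k L_{v'_k}$ yields, for every $v''\in\GU$, a $(K,\epsilon)$-Følner set inside $L_{v'_k}v''\subseteq Lv''$. This is exactly ubiquitous fiberwise amenability. Note that the transport step only populates the fibers over the dense orbit directly, so the first application of the lemma (thickening to the open set $W_u$) is essential for the saturation argument to reach every unit, and the second application together with compactness is essential because the translating elements $z$ carry no uniform size bound.
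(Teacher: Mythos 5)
Your proof is correct. The paper itself contains no proof of this statement---it is imported from \cite[Theorem 5.13]{M-W}---but your argument follows essentially the same route as the cited source: a local-slice-type construction (compare Lemma \ref{3.6}) propagates a $(K,\epsilon)$-F{\o}lner set from a single source fiber to all fibers over a neighborhood, right-invariance transports F{\o}lner sets along arrows, minimality (the saturation of a nonempty open subset of $\GU$ is everything) reaches every fiber, and compactness of $\GU$ supplies the uniform compact set $L$ required for ubiquitous fiberwise amenability.
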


The following result, established in \cite{M-W} as well, is a groupoid version of ``amenability versus paradoxicality''.  Note that a slight difference in the statement of the first case here is that we use ubiquitously fiberwise amenability instead of fiberwise amenability in the original paper.  This is because we do not assume the minimality of the groupoid here. However, Theorem \ref{thm:minimal-fiberwise-amenability} implies that fiberwise amenability is equivalent to ubiquitously fiberwise amenability for minimal groupoids and thus yields a real dichotomy. See the original version of the theorem (\cite[Theorem 5.15]{M-W}).

\begin{thm}(\cite[Theorem 5.15]{M-W})
	\label{thm: dichotomy} Let $\CG$ be a $\sigma$-compact groupoid. Equip $\CG$ with the canonical extended metric $\rho$. Then we have the following.
	\begin{enumerate}
		\item If $\CG$ is ubiquitously fiberwise amenable then for all $R,\epsilon>0$ there
		is a compact set $K$ with $\CG^{(0)}\subset K\subset\CG$ such that
		for all compact set $L\subset\CG$ and all unit $u\in G^{(0)}$ there
		is a finite set $F_{u}$ satisfying 
		\[
		Lu\subset F_{u}\subset KLu\ \textrm{and}\ \bar{B}_{\rho}(F_{u},R)\leq(1+\epsilon)|F_{u}|.
		\]
		
		\item If $\CG$ is not fiberwise amenable then for all compact set $L\subset\CG$
		and $n\in\mathbb{N}$ there is a compact set $K\subset\CG$ such that
		for all compact set $M\subset\CG$ and all $u\in G^{(0)}$, the set
		$KMu$ contains at least $n|Mu|$ many disjoint sets of the form $L\gamma u$,
		i.e., there exists a disjoint family $\{L\gamma_{i}u\subset KMu:i=1,\dots,n|Mu|\}$. 
	\end{enumerate}
\end{thm}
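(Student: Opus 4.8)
My overall plan is to deduce both clauses from the two halves of Proposition \ref{prop:fiberwise-amenability-metric}, which recast (ubiquitous) fiberwise amenability as the metric F{\o}lner condition written multiplicatively via $|KF|/|F|$; this outer, multiplicative form is precisely what powers a padding argument for clause (1) and a packing argument for clause (2). Two elementary facts are used throughout. First, for any finite $F$ lying in a single source fiber and any compact $K\supseteq\ell^{-1}([0,R])$ one has $\bar{B}_{\rho}(F,R)\subseteq KF$, since $\rho(z,f)\le R$ forces $zf^{-1}\in\ell^{-1}([0,R])$. Second, the quantity $C_{L}:=\sup_{v\in\GU}|(L^{-1}L)v|$ is finite because $(\CG,\rho)$ is uniformly locally finite.

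For clause (1) I would \emph{engulf} $Lu$ in a uniformly available F{\o}lner set. Given $R,\epsilon>0$, fix a compact $K_{1}$ containing $\GU\cup L\cup\ell^{-1}([0,R])$ and apply Proposition \ref{prop:fiberwise-amenability-metric}(2) to a compact $K'\supseteq\bar{B}_{\rho}(K_{1},R)$ and tolerance $\epsilon$, obtaining a compact $L_{1}$ so that every unit $u$ carries a nonempty finite $F^{u}\subseteq L_{1}u\cup\{u\}$ with $|K'F^{u}|\le(1+\epsilon)|F^{u}|$, which we may arrange to contain $u$. Setting $F_{u}:=K_{1}F^{u}$, the inclusion $u\in F^{u}$ together with $L\subseteq K_{1}$ gives $Lu\subseteq F_{u}$; the containment $\bar{B}_{\rho}(F_{u},R)=\bar{B}_{\rho}(K_{1}F^{u},R)\subseteq K'F^{u}$ yields $|\bar{B}_{\rho}(F_{u},R)|\le(1+\epsilon)|F^{u}|\le(1+\epsilon)|F_{u}|$; and $F_{u}\subseteq K_{1}L_{1}u$ sits inside $KLu$ for a compact $K$ built from $K_{1},L_{1}$ and $L$. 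The only real content is whether $K$ can be chosen depending on $R,\epsilon$ alone, uniformly across the (mutually non-isometric) source fibers; this uniformity is exactly what ubiquitous, rather than plain, fiberwise amenability is designed to furnish, and is why minimality in \cite[Theorem 5.15]{M-W} can here be traded for ubiquity.

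For clause (2) I would argue contrapositively and then pack greedily, so that no appeal to Hall's marriage theorem is needed. Failure of fiberwise amenability, read through Proposition \ref{prop:fiberwise-amenability-metric}(1), yields a compact $K_{0}\supseteq\GU$ and an $\epsilon_{0}>0$ with $|K_{0}F|>(1+\epsilon_{0})|F|$ for every finite nonempty $F$ in every fiber; iterating gives $|K_{0}^{m}F|>(1+\epsilon_{0})^{m}|F|$, so fixing $m$ with $(1+\epsilon_{0})^{m}\ge nC_{L}$ and writing $K_{1}:=K_{0}^{m}$ we obtain $|K_{1}Mu|\ge nC_{L}|Mu|$ for all compact $M$ and all $u$. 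Inside the finite set $K_{1}Mu$ I would then choose a maximal family $\{\gamma_{i}u\}$ that is $(L^{-1}L)$-separated, meaning $\gamma_{j}u\notin(L^{-1}L)\gamma_{i}u$ for $i\neq j$. Separation forces the translates $L\gamma_{i}u$ to be pairwise disjoint, since an intersection would place $\gamma_{j}u$ in $(L^{-1}L)\gamma_{i}u$, while maximality forces $K_{1}Mu\subseteq\bigcup_{i}(L^{-1}L)\gamma_{i}u$ and hence $|\{\gamma_{i}u\}|\ge|K_{1}Mu|/C_{L}\ge n|Mu|$. Taking $K:=LK_{1}$ gives $L\gamma_{i}u\subseteq KMu$, so any $n|Mu|$ of the $L\gamma_{i}u$ constitute the desired disjoint family.

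The clean half is clause (2): once non-amenability is converted into the uniform expansion $|K_{0}F|>(1+\epsilon_{0})|F|$, multiplicative iteration together with a greedy maximal-separation packing does everything. The genuine obstacle lies in clause (1): the engulfing construction itself is routine, but securing the ambient compact set $K$ — equivalently, the F{\o}lner scale $L_{1}$, and the centering of the F{\o}lner witness so that it captures the prescribed $Lu$ — simultaneously over all source fibers is the delicate step, and is exactly where the uniform (ubiquitous) hypothesis must be invoked.
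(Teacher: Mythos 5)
The paper does not prove this theorem itself (it is imported verbatim from \cite[Theorem 5.15]{M-W}), so your proposal can only be judged on its own terms. Your clause (2) is sound: negating fiberwise amenability through Proposition \ref{prop:fiberwise-amenability-metric}(1) does give a compact $K_0\supseteq\GU$ and $\epsilon_0>0$ with $|K_0F|>(1+\epsilon_0)|F|$ for every nonempty finite $F$; the iteration $|K_0^mF|>(1+\epsilon_0)^m|F|$ is legitimate since $K_0^m$ is again compact and $K_0F$ is again finite and nonempty by uniform local finiteness; and the maximal $(L^{-1}L)$-separated subfamily of $K_0^mMu$ produces the $n|Mu|$ disjoint translates exactly as you count. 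Here $K=LK_0^m$ is allowed to depend on $L$ and $n$, which matches the quantifiers of (2).

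Clause (1), however, has a genuine gap, and it is exactly the point you flag at the end without closing. The statement quantifies $\forall R,\epsilon\ \exists K\ \forall L$: the compact set $K$ depends only on $R$ and $\epsilon$ and must serve every compact $L$. In your construction every ingredient depends on $L$: you take $K_1\supseteq L$, then $K'\supseteq\bar{B}_{\rho}(K_1,R)$, then $L_1=L_1(K',\epsilon)$ from ubiquity, and the containment $F_u=K_1F^u\subseteq K_1L_1u$ forces the final $K$ to absorb $K_1L_1L^{-1}$, hence to depend on $L$. Ubiquity cannot repair this: it supplies uniformity of $L_1$ over the units $u$ (which you have already spent to get $F^u$ in every fiber), not uniformity over $L$. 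Reordering the choices does not help either: to guarantee $Lu\subseteq K_1F^u$ one needs $K_1\supseteq L(L_1\cup\GU)^{-1}$ --- your ``arrange $F^u$ to contain $u$'' is itself not free, since adding $u$ to a small F{\o}lner set can destroy the estimate $|K'F^u|\leq(1+\epsilon)|F^u|$ and one cannot recenter by right translation without leaving the source fiber --- while $L_1$ is produced from a set $K'$ that must contain $\bar{B}_{\rho}(K_1,R)$, so the dependencies are circular. What your engulfing argument actually proves is the weaker statement $\forall R,\epsilon,L\ \exists K$. To obtain the stated uniform $K$ one must pad $Lu$ by small F{\o}lner sets attached near \emph{each point} of $Lu$ (right translates of the ubiquitous F{\o}lner witnesses, so the padding stays within a distance $S=S(R,\epsilon)$ of $Lu$ and $K=\ell^{-1}([0,S])$ is $L$-independent), and then combine them not by a naive union --- whose F{\o}lner estimate fails because of overlaps --- but by some averaging or layer-cake device that extracts a single F{\o}lner set from the weighted sum $\sum_{x\in Lu}|F_x|^{-1}\mathbf{1}_{F_x}$; that combination step is the real content of \cite[Theorem 5.15]{M-W}(1) and is missing from the proposal.
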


From now on, we only consider \textbf{$\sigma$-compact, locally
	compact, Hausdorff, étale  ample topological groupoids whose unit spaces are compact}. 

\section{F{ø}lner sequences and Banach densities for ample groupoids}
 In this section, we introduce concepts of F{\o}lner sequences for groupoids and Banach densities for groupoids that admits F{\o}lner sequences.
 
 \subsection{F{\o}lner sequences for groupoids}
\begin{defn}\label{defn_normal folner set}
Let $\CG$ be an ample groupoid. Let $K$ be a compact set in $\CG$, $\epsilon>0$ and $N\in \N_+$, we say a compact open set $S$ in $\CG$ is a \textit{normal $(K, \epsilon)$-F{\o}lner set}  if there is a family $\{C^l_{i, j}: i, j\in F_l, 1\leq l\leq m\}$ of compact open bisections with an $i_l\in F_l$ for each $1\leq l\leq m$ such that 
\begin{enumerate}
	\item $\CC_l=\{C^l_{i, j}: i, j\in F_l\}$ is a compact open multisection for each $1\leq l\leq m$;
	\item  $S=\bigcup_{l=1}^m\bigsqcup_{i\in F_l}C^l_{i, i_l}$;
	\item $\GU=\bigsqcup_{l=1}^mC^l_{i_l, i_l}$;  
	\item $Su$ is $(K, \epsilon)$-F{\o}lner for any $u\in \GU$ in the sense of Definition \ref{def:groupoid-Folner}.
\end{enumerate}
\end{defn}

The following lemma will be used to construct normal F{\o}lner sets in groupoids.

\begin{lem}\label{lem: folner slice}
Let $\CG$ be a ubiquitous fiberwise amenable groupoid and $u\in \GU$ such that $\CG^u_u=\{u\}$. Let $K$ be a compact set in $\CG$ and $\epsilon>0$. Then there is a compact open multisection $\CC=\{C_{i, j}: i, j\in F\}$ and an $i_u\in F$ such that 
\begin{enumerate}
	\item $u\in C_{i_u, i_u}$ and
	\item if we write $D=\bigsqcup_{i\in F}C_{i, i_u}$ then $Dv$ is $(K, \epsilon)$-F{\o}lner for any $v\in C_{i_u, i_u}$.
\end{enumerate}
\end{lem}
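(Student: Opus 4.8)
The plan is to manufacture a single $(K,\epsilon)$-F{\o}lner set inside the source fiber $\CG_u$, to thicken it to a compact open multisection that also ``sees'' its $K$-translate, and finally to shrink the base level until the action of $K$ becomes independent of the base point. Since $\CG$ is ample I first enlarge $K$ to a compact \emph{open} set $\hat K\supseteq K$; because $\partial_K A\subseteq\partial_{\hat K}A$ for every $A$, any $(\hat K,\epsilon)$-F{\o}lner set is automatically $(K,\epsilon)$-F{\o}lner, so I may and do assume henceforth that $K$ is compact open. Ubiquitous fiberwise amenability (Definition \ref{def:fiberwise-amenable}) then provides a finite $(K,\epsilon)$-F{\o}lner set $\Phi\subset\CG_u$, and I arrange $u\in\Phi$: take $\Phi=\CG_u$ if this fiber is finite, and otherwise enlarge a sharper F{\o}lner set and adjoin the point $u$, the excess boundary being controlled by the uniform local finiteness of $(\CG,\rho)$.

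Next I capture a whole metric neighborhood. Put $R=\overline{\ell}(K)$ and choose $S_0$ with $\Phi\subset\bar{B}_\rho(u,S_0)$, so that $\ell(yx)\le R+S_0=:S'$ for composable $y\in K$, $x\in\Phi$, whence $\Phi\cup K\Phi\subset\bar{B}_\rho(u,S')$. Since $\CG^u_u=\{u\}$, applying Lemma \ref{3.6} with radius $S'$ yields some $\tilde{S}\ge S'$ and a compact open multisection $\CC'=\{C'_{i,j}:i,j\in I'\}$ with $i_u\in I'$, $u\in C'_{i_u,i_u}$, on which $\ell$ is constant on every bisection and $\bar{B}_\rho(v,\tilde{S})\subset Ev:=\bigsqcup_{i\in I'}C'_{i,i_u}v$ for every $v$ in the level $C'_{i_u,i_u}$; in particular $\Phi\cup K\Phi\subset Eu$. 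I write $\tau_v\colon Eu\to Ev$ for the index-preserving bijection $C'_{i,i_u}u\mapsto C'_{i,i_u}v$, set $J=\{i\in I':C'_{i,i_u}u\in\Phi\}$ (so $i_u\in J$ because $u\in\Phi$) and $D'=\bigsqcup_{i\in J}C'_{i,i_u}$, so that $D'u=\Phi$ and $D'v=\tau_v(\Phi)$.

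The crux, and where I expect the real difficulty because of the failure of global homogeneity stressed in the introduction, is to force the action of $K$ near $u$ to be independent of the base point. Writing $v_i=r(C'_{i,i_u}v)$ and using that both the ladder $C'_{m,i}=C'_{m,i_u}(C'_{i,i_u})^{-1}$ and $K$ are compact open, the set $K\cap C'_{m,i}$ is clopen in $C'_{m,i}$, so ``$C'_{m,i}v_i\in K$'' is a clopen condition on $v$. Intersecting over the finitely many pairs $(i,m)\in I'\times I'$, I obtain a compact open $V\ni u$ inside the level with
\[
C'_{m,i}v_i\in K\iff C'_{m,i}u_i\in K\qquad(v\in V,\ i,m\in I').
\]
Restricting the whole multisection to the base $V$ keeps it compact open and preserves all the properties above for $v\in V$; this is precisely the step that defeats the ``blindness'' of the fiberwise dynamics.

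It then remains to check the F{\o}lner estimate fibrewise. Fix $v\in V$ and $w=C'_{i,i_u}v\in D'v$. Constancy of $\ell$ on ladders gives $\ell(w)=\ell(\tau_v^{-1}(w))\le S_0$, so $\ell(yw)\le R+S_0\le\tilde{S}$ whenever $y\in K$ and $s(y)=r(w)$; hence $K\cdot D'v\subseteq Ev$, and writing $yw=C'_{m,i_u}v$ gives $y=C'_{m,i}v_i\in K$. By the homogeneity of $V$ this forces $C'_{m,i}u_i\in K$, so $\tau_v^{-1}(yw)=(C'_{m,i}u_i)\,\tau_v^{-1}(w)\in K\tau_v^{-1}(w)$; reading the computation backwards (from $K\Phi\subset Eu$ and homogeneity) shows $\tau_v(Kx)=K\tau_v(x)$ for every $x\in\Phi$. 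Since $\tau_v$ is a bijection with $\tau_v(\Phi)=D'v$, it therefore carries $\partial_K^+\Phi$ onto $\partial_K^+(D'v)$ and $\partial_K^-\Phi$ onto $\partial_K^-(D'v)$, so $|\partial_K(D'v)|=|\partial_K\Phi|\le\epsilon|\Phi|=\epsilon|D'v|$ and $D'v$ is $(K,\epsilon)$-F{\o}lner. Taking $F=J$, $D=D'$, and $\CC$ to be the restriction of $\CC'$ to the indices in $J$ over the base $V$ then meets both conclusions of the lemma.
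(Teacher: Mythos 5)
Your overall strategy is sound and genuinely different from the paper's. The paper invokes Theorem \ref{thm: dichotomy}(1) with $L=\GU$ to produce a set $E_u\ni u$ that is F{\o}lner in the \emph{metric} sense $|\bar{B}_\rho(E_u,R)|\le(1+\epsilon)|E_u|$, and then transfers this to the other fibers using only the local constancy of $\ell$ on the bisections supplied by Lemma \ref{3.6}: the index-preserving bijection maps $\bar{B}_\rho(Dv,R)$ \emph{into} $\bar{B}_\rho(E_u,R)$, and $KDv\subseteq\bar{B}_\rho(Dv,R)$ finishes the estimate. You instead work with the combinatorial $K$-boundary directly and, after reducing to $K$ compact open via ampleness, shrink the base level to a clopen $V$ on which membership of the transition elements $C'_{m,i}v_i$ in $K$ is constant in $v$; this yields the exact identity $\tau_v(Kx)=K\tau_v(x)$ and hence $|\partial_K(D'v)|=|\partial_K\Phi|$. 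That is a clean alternative mechanism (equality of boundaries rather than a one-sided ball estimate), at the cost of an extra shrinking of the base; both transfers are valid.

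The one genuine gap is your opening step: arranging $u\in\Phi$. The lemma does require it (since $i_u\in F$, the set $Dv$ contains $v$, so $Du$ must contain $u$), but ubiquitous fiberwise amenability only hands you a $(K,\epsilon)$-F{\o}lner set \emph{somewhere} in $Lu\cup\{u\}$, and you cannot right-translate it within the fiber $\CG_u$ because $\CG_u^u=\{u\}$. Your proposed patch --- adjoin $u$ and control the excess by uniform local finiteness --- is incomplete: uniform local finiteness bounds $|\partial_K^+\{u\}|\le\|K\|$ from \emph{above}, but to absorb the additive error of roughly $\|K\|+1$ into $\epsilon|\Phi'|$ you also need a \emph{lower} bound on $|\Phi'|$. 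In a non-minimal groupoid, producing arbitrarily large F{\o}lner sets inside the fixed fiber $\CG_u$ is not automatic: Lemmas \ref{lem: big ball} and \ref{lem: Folner length arbitrary long} are only available under minimality, and the obvious device of forcing $K$ to contain a large ball runs into a circularity, since the compact set $L$ locating $\Phi'$ depends on the enlarged $K$. The clean repair is exactly the paper's move: apply Theorem \ref{thm: dichotomy}(1) with $L=\GU$, whose conclusion $Lu\subseteq F_u$ forces $u\in F_u$ while retaining the F{\o}lner estimate; with $F_u$ in place of $\Phi$ (and $K$ enlarged to contain $\ell^{-1}([0,R])$ so that the metric estimate dominates the $K$-boundary), your clopen-matching argument then goes through unchanged.
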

\begin{proof}
Equipped with $\CG$ the coarse continuous length function $\ell$ and denote by $\rho$ the induced canonical invariant metric. Since $\CG$ is ample, without loss of any generality, one may assume $\ell$ is locally constant by Theorem \ref{thm:coarse-length-functions}.

Let $K, \epsilon$ be given. Without loss of any generality, one may assume $\GU\subset K$. Now, let $R>0$ such that $KF\subset \bar{B}(F, R)$ for any $F\subset \CG_w$ and any $w\in \GU$. Then for this $R$, Theorem \ref{thm: dichotomy} implies that there is a compact set $M$ with $\GU\subset M\subset \CG$ such that for any compact set $T$ in $\CG$ and any $w\in \GU$ there is a finite set $E_w$ such that \[
Tu\subset E_{w}\subset MTw\ \textrm{and}\ \bar{B}_{\rho}(E_{w},R)\leq(1+\epsilon)|E_{w}|.
\]
Now let $T=\GU$ and choose $R_1>0$ such that
\[E_w\subset Mw\subset \bar{B}_{\rho}(w,R_1)\subset \bar{B}_{\rho}(w,R+R_1)\]
for any $w\in \GU$.  Now for our $u\in \GU$ in the assumption, $R+R_1$ and $\epsilon>0$, Lemma \ref{3.6} shows that there are an $N\in [R+R_1, R+R_1+\epsilon)$, a compact open multisection $\CT=\{C_{i, j}: i, j\in T_u\}$ and an $i_u\in T$ with $u\in C_{i_u, i_u}$ such that
\[\bar{B}_\rho(v, N)\subset\bigsqcup_{i\in T_u}C^u_{i, i_u}v\]
for any $v\in C^u_{i_u, i_u}$ and $\ell$ is constant on each $C^u_{i, j}$ for each  $i, j\in T_u$. Now since $N>R+R_1$, one has
\[E_{u}\subset Mu\subset\bar{B}_{\rho}(u,R_1)\subset \bar{B}_{\rho}(u,R+R_1)\subset \bigsqcup_{i\in T}C^l_{i, i_u}u\]
and thus there is a $F\subset T$ such that 
\[E_{u}=\bigsqcup_{i\in F}C_{i, i_u}u.\]
Recall that $|\bar{B}_{\rho}(E_{u},R)|\leq(1+\epsilon)|E_{u}|$. We write $D=\bigsqcup_{i\in F}C_{i, i_u}$ for simplicity and observe that $E_u=Du$ actually.

Now fix a $v\in C_{i_u, i_u}$, we define $f: \bigsqcup_{i\in T}C_{i, i_u}v\to \bigsqcup_{i\in T}C_{i, i_u}u$ by claiming that if $C_{j, i_u}v=\{x\}$ then $\{f(x)\}=C_{j, i_u}u$. Since all $C_{j, i_u}$ are bisections, $f$ is bijective. Furthermore, observe that $Dv=\bigsqcup_{i\in F}C_{i, i_u}v$ and thus one has $f(Dv)=E_{u}$. Now if $x\in \bar{B}_\rho(Dv, R)$ then  there is an $j\in F$ and a $y\in C_{j, i_u}v$ such that $\rho(x, y)=r\leq R$. Now since $\ell$ is constant on the bisection $C^l_{i, i_l}$, one has $\ell(y)=\ell(z)\leq R_1$ where $\{z\}=C^l_{i, i_u}u$ since $C^l_{i, i_u}u\subset E_{u}\subset \bar{B}_\rho(u, R_1)$. Thus $\ell(x)\leq \ell(y)+\rho(x, y)=R_1+R$, which implies that $x\in \bar{B}_\rho(v, R_1+R)\subset \bigsqcup_{i\in T}C_{i, i_u}v.$ 

So let $\{x\}=C^l_{j_1, i_l}v$ for some $j_1\in T_l$. Now for the $y\in Sv$ above with $\{y\}=C^l_{j, i_l}v$ for some $j\in F_l$ such that $\rho(x, y)=r\leq R_1$, one has $yx^{-1}\in C^l_{j, j_1}$ and  $\ell(yx^{-1})=r\leq R$. Now since $f(y)f(x)^{-1}\in C^l_{j, j_1}$ as well and $\ell$ is constant on $C^l_{j, j_1}$, one has $f(x)\in \bar{B}_\rho(E_{u}, R_1)$ because $f(y)\in E_{u}$. Therefore we have verified that $f(\bar{B}_\rho(Dv, R))\subset \bar{B}_\rho(E_{u}, R)$. Then because $f$ is bijective and $f(Dv)=E_{u}$, one has 
\[|\bar{B}_\rho(Dv, R)|\leq |\bar{B}_\rho(E_{u}, R)|\leq (1+\epsilon)|E_{u}|=(1+\epsilon)|Dv|.\] Now for any  $v\in C^l_{i_u, i_u}$ one has
\[|KDv|\leq |\bar{B}_\rho(Dv, R)|\leq (1+\epsilon)|Dv|,\]
which implies that $Dv$ is $(K, \epsilon)$-F{\o}lner.
\end{proof}

Then using Lemma \ref{lem: folner slice}, one can show normal F{\o}lner sets always exist in the following natural cases.

\begin{prop}\label{prop:normal folner set}
Let $\CG$ be a principal  ubiquitous fiberwise amenable groupoid. Then for each compact set $K$ in $\CG$ and $\epsilon>0$ there is a normal  $(K, \epsilon)$-F{\o}lner set $S$ in $\CG$.
\end{prop}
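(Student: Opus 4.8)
The plan is to construct the normal $(K,\epsilon)$-F{\o}lner set by patching together the local F{\o}lner slices produced by Lemma \ref{lem: folner slice}, using a compactness argument on the unit space $\GU$ to reduce to finitely many such slices, and then disjointifying them into a genuine partition of $\GU$.

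\textbf{Step 1: Apply Lemma \ref{lem: folner slice} at each point.} Since $\CG$ is principal, every isotropy group is trivial, so $\CG^u_u=\{u\}$ for every $u\in\GU$. Thus for each $u\in\GU$ I may invoke Lemma \ref{lem: folner slice} with the given compact set $K$ and $\epsilon$ to obtain a compact open multisection $\CC^u=\{C^u_{i,j}: i,j\in F^u\}$ with a distinguished index $i_u\in F^u$ such that $u\in C^u_{i_u,i_u}$ and, writing $D^u=\bigsqcup_{i\in F^u}C^u_{i,i_u}$, the fiber $D^u v$ is $(K,\epsilon)$-F{\o}lner for every $v\in C^u_{i_u,i_u}$.

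\textbf{Step 2: Extract a finite subcover.} The levels $\{C^u_{i_u,i_u}: u\in\GU\}$ form an open cover of the compact space $\GU$. Extract a finite subcover indexed by $u_1,\dots,u_m$, giving base levels $B_l := C^{u_l}_{i_{u_l},i_{u_l}}$ with $\GU=\bigcup_{l=1}^m B_l$. This is where compactness of $\GU$ (a standing hypothesis) is essential.

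\textbf{Step 3: Disjointify the base levels and truncate the multisections.} The cover $\{B_l\}$ need not be disjoint, whereas condition (3) of Definition \ref{defn_normal folner set} demands a partition $\GU=\bigsqcup_{l=1}^m C^l_{i_l,i_l}$. The plan is to disjointify: set $\widetilde B_1=B_1$ and $\widetilde B_l = B_l\setminus\bigcup_{k<l}B_k$, each of which is compact open since the ample topology has a basis of compact open sets and finite Boolean combinations of compact open sets are compact open. Then restrict the whole multisection $\CC^{u_l}$ to the sub-level $\widetilde B_l$, replacing each ladder $C^{u_l}_{i,i_{u_l}}$ by $C^l_{i,i_l} := C^{u_l}_{i,i_{u_l}}\widetilde B_l$ (and defining the remaining entries $C^l_{i,j}=C^l_{i,i_l}(C^l_{j,i_l})^{-1}$ accordingly). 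The key point is that restricting a multisection to a clopen subset of its base level yields again a compact open multisection, and the F{\o}lner property of $D^{u_l}v$ is inherited since $\widetilde B_l\subseteq B_l$, so $D^l v := \bigsqcup_{i}C^l_{i,i_l}v$ equals $D^{u_l}v$ for each $v\in\widetilde B_l$ and hence remains $(K,\epsilon)$-F{\o}lner.

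\textbf{Step 4: Assemble $S$ and verify the axioms.} Define $S=\bigcup_{l=1}^m\bigsqcup_{i\in F_l}C^l_{i,i_l}$. Conditions (1) and (2) of Definition \ref{defn_normal folner set} hold by construction; condition (3) holds because $\GU=\bigsqcup_{l=1}^m\widetilde B_l=\bigsqcup_{l=1}^m C^l_{i_l,i_l}$ is now a genuine disjoint union; and condition (4) holds since each $u\in\GU$ lies in exactly one $\widetilde B_l$, so $Su = D^l u$ is $(K,\epsilon)$-F{\o}lner by Step 3.

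\textbf{The main obstacle} I anticipate is Step 3: ensuring that the disjointification is compatible with the multisection structure. Truncating the base level $B_l$ to the smaller clopen set $\widetilde B_l$ must be pushed consistently through all ladders and all the composition identities $C_{i,j}C_{j,k}=C_{i,k}$, while preserving compactness and openness of every bisection; the ample hypothesis (compact open basis) is exactly what makes $\widetilde B_l$ compact open and keeps the truncated bisections inside the ample structure. One must also check that $Su$ for a fixed $u$ collapses to precisely the single local fiber $D^l u$ and picks up no contributions from the other indices $l'\neq l$, which follows because the $\widetilde B_l$ are disjoint and $C^{l'}_{i,i_{l'}}u=\emptyset$ whenever $u\notin\widetilde B_{l'}$.
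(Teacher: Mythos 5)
Your proposal is correct and follows essentially the same route as the paper: apply Lemma \ref{lem: folner slice} at every unit (using principality to get trivial isotropy), extract a finite subcover of $\GU$ by the base levels, and then shrink the multisections so that the base levels become a clopen partition. Your Step 3 merely makes explicit the disjointification that the paper compresses into the phrase ``shrink all $C^u_{i,j}$ if necessary,'' so the two arguments coincide.
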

\begin{proof}
Since $\CG$ is principal, Lemma \ref{lem: folner slice} implies that for any $u\in \GU$, there is a compact open multisection $\CC_u=\{C^u_{i, j}: i, j\in F_u\}$ and an $i_u\in F_u$ such that 
\begin{enumerate}
	\item $u\in C^u_{i_u, i_u}$ and
	\item if we write $D_u=\bigsqcup_{i\in F_u}C^u_{i, i_u}$ then $D_uv$ is $(K, \epsilon)$-F{\o}lner for any $v\in C^u_{i_u, i_u}$.
\end{enumerate}
Then using $\GU$ is compact and shrink all $C^u_{i, j}$ if necessary, one has a family $\{\CC_1, \dots, \CC_m\}$  of compact open multisections such that 
\begin{enumerate}
	\item each $\CC_l=\{C^l_{i, j}: i, j\in F_l\}$;
	\item for each $l$ there is an $i_l\in F_l$ such that  $\GU=\bigsqcup_{l=1}^mC^l_{i_l, i_l}$ and
	\item  if we write $D_l=\bigsqcup_{i\in F_l}C^l_{i, i_l}$ then $D_lv$ is $(K, \epsilon)$-F{\o}lner for any $v\in C^l_{i_l, i_l}$
	\end{enumerate}
Now we define $S=\bigcup_{l=1}^mD_l=\bigcup_{l=1}^m\bigsqcup_{i\in F_l}C^l_{i, i_l}$, which is a normal $(K, \epsilon)$-F{\o}lner set by our construction. 
\end{proof}

When the groupoid $\CG$ is minimal, we may largely weaken the conditions needed in  Proposition \ref{prop:normal folner set}. We first introduce the following concepts.

\begin{defn}
	Let $\CG$ be an ample groupoid and $K\subset \CG$ a compact set. We write $\|K\|=\sup_{u\in s(K)}|Ku|$.
\end{defn}

\begin{rmk}\label{5.2}
Note that the number $\|K\|$ is always a finite integer since $K$ is compact. In addition, let $u\in \GU$ and $N\subset \CG_u$ a finite set. Then one has $|KN|=|\bigcup_{x\in N}Kx|\leq \sum_{x\in N}|Kx|\leq \|K\||N|$.
\end{rmk}

Like in the case of amenable groups, right shifts or small perturbations of a  F{\o}lner set stays F{\o}lner.

\begin{lem}\label{lem: shrink folner set}
	Let $\CG$ be a groupoid and $u\in \GU$. Suppose $F\subset \CG_u$ is a $(K, \delta)$-F{\o}lner set and  $\gamma\in \CG$ such that $r(\gamma)=u$. Then
	\begin{enumerate}
		\item if $F'\subset F$ such that $|F'|\geq (1-\epsilon)|F|$ then $F'\gamma$ is $(K, \delta+\epsilon)$-F{\o}lner.
		\item if $|F'\Delta F|<\epsilon|F|$, then $F'$ is $(K, \frac{\epsilon(2+\|K\|)}{1-\epsilon})$-F{\o}lner.
	\end{enumerate}
	 
\end{lem}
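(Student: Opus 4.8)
The plan is to treat both parts by a single boundary bookkeeping, after first disposing of the right translation by $\gamma$. Since $F\subseteq\CG_u$ and $r(\gamma)=u$, right multiplication $x\mapsto x\gamma$ is an injection of $\CG_u$ into $\CG_{s(\gamma)}$ that commutes with left multiplication by elements of $K$; moreover $y(w\gamma)=(yw)\gamma$ lies in $F'\gamma$ exactly when $yw\in F'$. Hence $\partial_{K}^{+}(F'\gamma)=(\partial_{K}^{+}F')\gamma$ and $\partial_{K}^{-}(F'\gamma)=(\partial_{K}^{-}F')\gamma$, so that $|\partial_{K}(F'\gamma)|=|\partial_{K}F'|$ while $|F'\gamma|=|F'|$. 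Thus the right translation leaves the F{\o}lner ratio unchanged, and it suffices to estimate $|\partial_{K}F'|/|F'|$ inside the single fiber $\CG_u$.

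The core step is to compare $\partial_{K}F'$ with $\partial_{K}F$ by charging every boundary point of $F'$ either to a boundary point of $F$ or to the modification $F\Delta F'$. Writing $\partial_{K}F'=\partial_{K}^{+}F'\sqcup\partial_{K}^{-}F'$, I would run the following case analysis. For the outer part, a point $z=yw\in KF'\setminus F'$ (with $y\in K$, $w\in F'$) is either already in $\partial_{K}^{+}F$ (when $w\in F$ and $z\notin F$), or lies in $F\setminus F'$ (when $z\in F$), or is produced by a newly added point, i.e. $z\in K(F'\setminus F)$ (when $w\notin F$). For the inner part, a point $w\in\partial_{K}^{-}F'$ is either in $F'\setminus F$, or already in $\partial_{K}^{-}F$ (when some $yw\notin F$), or satisfies $yw\in F\setminus F'$ for some $y\in K$, hence $w\in K^{-1}(F\setminus F')$. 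Collecting these gives the inclusion
\[
\partial_{K}F'\subseteq\partial_{K}F\cup(F\Delta F')\cup K(F'\setminus F)\cup K^{-1}(F\setminus F').
\]
Now Remark \ref{5.2} supplies the uniform counts $|K(F'\setminus F)|\le\|K\|\,|F'\setminus F|$ and $|K^{-1}(F\setminus F')|\le\|K^{-1}\|\,|F\setminus F'|$, and combining with $|\partial_{K}F|\le\delta|F|$ yields a bound $|\partial_{K}F'|\le\delta|F|+c_{K}\,|F\Delta F'|$ with $c_{K}$ linear in $\|K\|$.

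For part (2) this is the desired estimate: using $|F\Delta F'|<\epsilon|F|$ together with $|F'|>(1-\epsilon)|F|$ and dividing, one reads off a F{\o}lner parameter of the announced form $\tfrac{\epsilon(2+\|K\|)}{1-\epsilon}$, where the precise constant $2+\|K\|$ comes from grouping the $|F\Delta F'|$ contribution with the translated terms (taking $K$ symmetric so that $\|K^{-1}\|=\|K\|$). Part (1) is the special case $F'\subseteq F$, where $F'\setminus F=\emptyset$, so the added-point term $K(F'\setminus F)$ disappears and only the removed set $F\setminus F'$ (of size $\le\epsilon|F|$) enters the outer boundary; dividing by $|F'|\ge(1-\epsilon)|F|$ then gives the claimed F{\o}lner parameter for $F'\gamma$.

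The step I expect to be the main obstacle is the inner boundary $\partial_{K}^{-}F'$. Deleting (or altering) even a small fraction of $F$ can open ``holes'' with large inner boundary, so new inner-boundary points of $F'$ need not be boundary points of $F$ at all; the analysis above charges each such point, through some $y\in K$, to a modified point, and the only reason this remains controlled is the uniform fiber bound $\|K\|$ of Remark \ref{5.2} (equivalently, the uniform local finiteness of $(\CG,\rho)$). Pinning down the exact constant therefore hinges on a careful accounting of which modified points are responsible for which inner-boundary points and on checking that no point is charged twice. This is precisely the place where the fiberwise, non-homogeneous nature of $\CG$ forces us to use the uniform cardinality estimate rather than a translation argument, and I would allocate most of the care of the proof here.
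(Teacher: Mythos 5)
Your counting is sound, and the underlying idea is the same as the paper's: charge each new boundary point of $F'$ either to a boundary point of $F$ or to $F\Delta F'$, and control the latter contribution by the uniform fiber bound $\|K\|$ of Remark \ref{5.2}. The difference is which boundary you track. You work with the two-sided boundary $\partial_K=\partial_K^+\cup\partial_K^-$ of Definition \ref{def:groupoid-Folner}, which forces the extra term $K^{-1}(F\setminus F')$ and the constant $\|K^{-1}\|$; the paper only ever controls the outer quantity $KF'\setminus F'$ (equivalently the ratio $|KF'|/|F'|$, which is the form in which the lemma is actually invoked, e.g.\ in Proposition \ref{prop:normal folner set} and Theorem \ref{thm: almost finite in measre}). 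With that convention both parts collapse to one-line set inclusions: for (1), $KF'\gamma\setminus F'\gamma\subset\bigl((KF\setminus F)\cup(F\setminus F')\bigr)\gamma$, giving $\delta+\epsilon$ directly with no $\|K^{-1}\|$; for (2), $|KF'|\le|K(F'\cap F)|+|K(F'\setminus F)|\le|KF|+\|K\|\,|F'\setminus F|$. This is where your route genuinely diverges from the statement: including the inner boundary, part (1) comes out as $\frac{\delta+\epsilon(1+\|K^{-1}\|)}{1-\epsilon}$ rather than $\delta+\epsilon$, and part (2) acquires $\|K^{-1}\|$, so the constants as printed are not literally reachable by your decomposition (your remark about "taking $K$ symmetric" is doing real work that the hypotheses do not grant). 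To be fair, the paper's own constants are also loose --- its proof of (2) writes $(1+\epsilon)|F|$ where $(1+\delta)|F|$ is meant, and (1) normalizes by $|F|$ rather than $|F'|$ --- and in every application $\delta\le\epsilon$ and only the outer boundary matters, so nothing downstream breaks. But if you want the stated bounds, drop the inner boundary and argue with $|KF'\setminus F'|$ alone; your careful treatment of $\partial_K^-F'$ is correct, just unnecessary here and strictly costlier.
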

\begin{proof}
	For the claim (1), simply observe that 
	\[KF'\gamma\setminus F'\gamma\subset KF\gamma\setminus F'\gamma=(KF\setminus F')\gamma\subset ((KF\setminus F)\cup (F\setminus F'))\gamma.\]
	This implies that $|KF'\gamma\setminus F'\gamma|\leq |KF\setminus F|+|F\setminus F'|=\delta+\epsilon$.
	
	For the  claim (2), observe that 
	\begin{align*}
		|KF'|\leq |K(F'\cap F)|+|K (F'\setminus F)|\leq (1+\epsilon)|F|+(\|K\|\epsilon)|F|\leq \frac{1+(\|K\|+1)\epsilon}{1-\epsilon}|F'|
	\end{align*}
	and thus $|KF'|\leq (1+\frac{\epsilon(\|K\|+2)}{1-\epsilon})|F'|$.
\end{proof}

 We then quote the following result.

\begin{lem}\cite[Lemma 7.5]{M-W}\label{lem: big ball}	Let $\CG$ be a minimal groupoid equipped with the canonical metric $\rho$. Then for any $N>0$ there is a $R>0$ such that for any $x\in \CG$ one has $|\bar{B}_\rho(x, R)|>N$.
\end{lem}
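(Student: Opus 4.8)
The plan is to reduce the statement to balls centred at units and then promote the fibrewise infinitude of source fibres to a single radius that works uniformly, via a compactness argument on $\GU$. First I would normalize to units: since balls are fibrewise, $\bar{B}_{\rho}(x,R)\subset\CG_{s(x)}$, and right-invariance of $\rho$ gives $\rho(x,y)=\ell(xy^{-1})=\ell(yx^{-1})=\rho(r(x),yx^{-1})$ for every $y\in\bar{B}_{\rho}(x,R)$. Hence $y\mapsto yx^{-1}$ restricts to a bijection $\bar{B}_{\rho}(x,R)\to\bar{B}_{\rho}(r(x),R)$ (with inverse $z\mapsto zx$), so $|\bar{B}_{\rho}(x,R)|=|\bar{B}_{\rho}(r(x),R)|$. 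It therefore suffices to find, for each $N$, an $R$ with $|\bar{B}_{\rho}(u,R)|>N$ for all $u\in\GU$.

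Next I would realize these balls as fibres of a compact open set. By Theorem \ref{thm:coarse-length-functions} (ampleness) I may take $\ell$ locally constant, so $K_R:=\ell^{-1}([0,R])$ is clopen; it is compact because coarseness of $\ell$ makes $K_R\setminus\GU$ precompact while $\GU$ is compact. Since $\ell$ vanishes exactly on $\GU$, one has $\GU\subset K_R$ and $\bar{B}_{\rho}(u,R)=K_Ru$, whence $|\bar{B}_{\rho}(u,R)|=|K_Ru|\ge 1$. Writing the compact open set $K_R$ as a finite disjoint union of compact open bisections $V_1,\dots,V_m$ gives $|K_Ru|=\#\{i:u\in s(V_i)\}=\sum_i\mathbf{1}_{s(V_i)}(u)$, an $\N$-valued locally constant (hence continuous) function of $u$, since each $s(V_i)$ is clopen. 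It is nondecreasing in $R$ with pointwise supremum $|\CG_u|$.

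The key step is then a nested-compactness argument. Put $m(R)=\min_{u\in\GU}|K_Ru|$, attained by continuity on the compact space $\GU$, and suppose for contradiction that $m(R)\le N$ for all $R$. The level sets $U_R=\{u\in\GU:|K_Ru|\le N\}$ are clopen, nonempty, and nested decreasing in $R$; choosing $R_n\uparrow\infty$ and applying the finite intersection property to the nonempty compact sets $U_{R_n}$ produces a unit $u_\ast$ with $|K_{R_n}u_\ast|\le N$ for all $n$, so $|\CG_{u_\ast}|=\sup_n|K_{R_n}u_\ast|\le N$. Then $\CG_{u_\ast}$ is finite, hence so is the orbit $[u_\ast]=r(\CG_{u_\ast})$, a finite (therefore closed, $\GU$ being Hausdorff) $\CG$-invariant set. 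Minimality forces $[u_\ast]=\GU$, making $\GU$ finite, contrary to the standing assumption that the unit space is infinite. This contradiction yields $m(R)\to\infty$, so some $R$ gives $m(R)>N$.

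The main obstacle is precisely this last passage from ``each source fibre is infinite'' to a single uniform radius. Fibrewise infinitude itself is cheap from minimality (a finite orbit is closed and invariant, hence all of the infinite $\GU$, absurd), but converting it into one $R$ valid for every $u$ simultaneously is where the work lies: it needs the local constancy of $u\mapsto|K_Ru|$ (this is where ampleness and the locally constant $\ell$ from Theorem \ref{thm:coarse-length-functions} are essential) together with the nested clopen/compact intersection. I would also flag the genuinely degenerate case of a finite unit space—where a minimal groupoid is just a transitive groupoid over finitely many points and the conclusion fails when the isotropy groups are finite—so the argument tacitly uses that $\GU$ (e.g.\ a Cantor set in the intended applications) is infinite.
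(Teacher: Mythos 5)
The paper does not actually prove this lemma; it is imported verbatim as \cite[Lemma 7.5]{M-W}, so there is no in-paper proof to compare against. Your argument is correct and self-contained, and it follows the route one would expect: the reduction to units via right-invariance ($y\mapsto yx^{-1}$ is a bijection $\bar{B}_\rho(x,R)\to\bar{B}_\rho(r(x),R)$) is fine; writing $\bar{B}_\rho(u,R)=K_Ru$ with $K_R=\ell^{-1}([0,R])$ compact open (coarseness gives precompactness of $K_R\setminus\GU$, local constancy of $\ell$ gives openness) and decomposing $K_R$ into disjoint compact open bisections correctly makes $u\mapsto|K_Ru|$ locally constant; and the nested clopen sets $U_{R_n}$ plus minimality (a finite orbit is closed and invariant, hence all of $\GU$) give the contradiction. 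Two small points. First, you are right that the statement silently needs $\GU$ infinite — for a finite minimal groupoid with finite isotropy the conclusion fails — and this hypothesis is indeed implicit in the paper's intended setting (Cantor unit spaces); flagging it is appropriate rather than a defect. Second, your nested-intersection formulation is a mild variant of the more common "pointwise then cover" argument (each fibre is infinite by minimality, so for each $u$ some $R_u$ works, and local constancy plus compactness of $\GU$ extracts a uniform $R$ from a finite subcover); the two are logically equivalent here, and yours has the small advantage of isolating exactly where minimality enters.
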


Then the next lemma shows that in minimal groupoids, F{\o}lner sets can be chosen arbitrary large.

\begin{lem}\label{lem: Folner length arbitrary long}	Let $\CG$ be a  minimal fiberwise amenable groupoid, $n\in \N^+$ and $\epsilon>0$. Suppose $u\in \GU$ such that $\CG^u_u=\{u\}$. Then  there is a compact set $K$ such that for any $(K, \epsilon)$-F{\o}lner set $F\subset \CG_u$ one has $|F|>n$.
\end{lem}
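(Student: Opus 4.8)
The plan is to run the Følner inequality in reverse: rather than building a small almost-invariant set, I force the multiplier $K$ to scatter every point of the fiber $\CG_u$ across a large ball, so that any set that is approximately $K$-invariant is obliged to be large. First I would record the elementary consequence of the Følner condition that drives everything. By Definition \ref{def:groupoid-Folner} a $(K,\epsilon)$-Følner set $F\subset\CG_u$ satisfies $|\partial_K F|\le \epsilon|F|$, and if we arrange $\GU\subseteq K$ then $F\subseteq KF$ (each $x\in F$ equals $r(x)\cdot x$ with $r(x)\in\GU\subseteq K$), so
\[
|KF|-|F|=|KF\setminus F|=|\partial_K^+ F|\le|\partial_K F|\le\epsilon|F|,
\]
whence $|KF|\le(1+\epsilon)|F|$. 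This is the same inequality appearing in Proposition \ref{prop:fiberwise-amenability-metric}. Thus it suffices to exhibit a compact $K\supseteq\GU$ for which $|KF|$ is automatically large.

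The choice will be $K=\ell^{-1}([0,R])$, where $\ell$ is the coarse continuous length function of Theorem \ref{thm:coarse-length-functions} and $R$ is to be fixed. Since $\GU$ is compact and $\ell$ is coarse and continuous, $K$ is a compact set containing $\GU$. The crucial point is the computation of $|Kx_0|$ for a single $x_0\in F$. Writing $v=r(x_0)\in\GU$, composability forces $Kx_0=\{yx_0: y\in K,\ s(y)=v\}$, and right translation $y\mapsto yx_0$ is a bijection from $\{y\in K:s(y)=v\}$ onto $Kx_0$. Using the identity $\bar{B}_\rho(v,R)=\ell^{-1}([0,R])\cap s^{-1}(v)$ already exploited in the proof of Lemma \ref{3.6}, the set $\{y\in K:s(y)=v\}$ is exactly $\bar{B}_\rho(v,R)$, so $|Kx_0|=|\bar{B}_\rho(r(x_0),R)|$.

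Minimality now enters, and only here, through Lemma \ref{lem: big ball}: given a target $N$, choose $R>0$ so that $|\bar{B}_\rho(x,R)|>N$ for every $x\in\CG$, in particular for every unit. Then for any nonempty $(K,\epsilon)$-Følner set $F\subset\CG_u$ and any $x_0\in F$ we get $|KF|\ge|Kx_0|=|\bar{B}_\rho(r(x_0),R)|>N$, and combining this with $|KF|\le(1+\epsilon)|F|$ gives $|F|>N/(1+\epsilon)$. Choosing $N$ at the outset so that $N\ge n(1+\epsilon)$ yields $|F|>n$, as required.

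I do not expect a genuine conceptual obstacle; the difficulty is entirely in correctly matching the pieces. The two points deserving care are the compactness of $\ell^{-1}([0,R])$ (which is precisely coarseness of $\ell$, since $\ell^{-1}((0,R])$ has bounded length hence precompact closure, together with compactness of $\GU$ and continuity of $\ell$) and the verification that the right-translation bijection identifies $Kx_0$ with the ball $\bar{B}_\rho(r(x_0),R)$. I note in passing that fiberwise amenability is not used in establishing the inequality itself, and that the freeness hypothesis $\CG^u_u=\{u\}$ is not needed for this argument; it is presumably kept only for uniformity with the surrounding statements.
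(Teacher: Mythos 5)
Your argument is correct and is essentially the paper's own proof: both take $K$ to contain $\ell^{-1}([0,R])$ with $R$ supplied by Lemma \ref{lem: big ball} so that every ball of radius $R$ has more than $n(1+\epsilon)$ elements, identify $KF$ with a union of such balls, and play this off against $|KF|\le(1+\epsilon)|F|$. Your closing observations (that minimality enters only through Lemma \ref{lem: big ball} and that the hypothesis $\CG^u_u=\{u\}$ is not used) also match the paper's proof, which likewise never invokes that hypothesis.
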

\begin{proof}
	By Lemma \ref{lem: big ball}, for the $n\in \N_+$, choose $R>0$ such that for any $x\in \CG$ one has $|\bar{B}_\rho(x, R)|>n(1+\epsilon)$, where $\rho$ is the canonical extended metric on $\CG$. Now choose a compact set $K$ in $\CG$ such that $\ell^{-1}([0, R])\subset K$. 
	Now our choice of $K$ implies that   $\bar{B}_\rho(F, R)\subset KF$ and thus 
	\[n(1+\epsilon)< |\bar{B}_\rho(F, R)|\leq |KF|\leq (1+\epsilon)|F|.\]	This implies that $|F|>n$.
\end{proof}

Now we are ready to prove the following result.

\begin{prop}\label{prop: minimal groupoid normal folner}
	Let $\CG$ be a minimal fiberwise amenable groupoid and there exists a $u\in \GU$ such that $\CG^u_u=\{u\}$. Then for any compact set $K$ in $\CG$ and $\epsilon>0$, there is a $(K, \epsilon)$-F{\o}lner normal set $S$ in $\CG$.
\end{prop}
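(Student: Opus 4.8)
The plan is to imitate the proof of Proposition \ref{prop:normal folner set}, the difference being that principality there supplies a good multisection based at \emph{every} unit, whereas here Lemma \ref{lem: folner slice} can be invoked only at the single unit $u$ with $\CG^u_u=\{u\}$. Since $\CG$ is minimal, Theorem \ref{thm:minimal-fiberwise-amenability} first upgrades fiberwise amenability to ubiquitous fiberwise amenability, so Lemma \ref{lem: folner slice} does apply at $u$ and yields a compact open multisection $\CC=\{C_{i,j}:i,j\in F\}$ with base index $i_u$, clopen base level $V:=C_{i_u,i_u}\ni u$, and column $D=\bigsqcup_{i\in F}C_{i,i_u}$ for which $Dv$ is $(K',\epsilon')$-F{\o}lner for every $v\in V$ (with $\epsilon'$, and an enlarged compact set $K'\supseteq K$, to be fixed below). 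The remaining task is to propagate this one column over the whole of $\GU$ using minimality.

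For the propagation, fix $w\in\GU$. Minimality says the saturation $r(\CG_V)$ of the nonempty open set $V$ is all of $\GU$, so there is $\gamma\in\CG$ with $r(\gamma)=w$ and $s(\gamma)\in V$; by ampleness choose a compact open bisection $B_w\ni\gamma$ with $s(B_w)\subseteq V$ and $L_w:=r(B_w)$ a clopen neighbourhood of $w$. Right-translation is the key device: for $v\in L_w$, if $\eta\in B_w$ is the unique element with $r(\eta)=v$, then $v':=s(\eta)\in V$, and the fibre of $DB_w^{-1}$ over $v$ is exactly $(Dv')\eta^{-1}$, which is $(K',\epsilon')$-F{\o}lner by right-invariance (Lemma \ref{lem: shrink folner set}(1) with $F'=F$). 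Thus $DB_w^{-1}$ is a column producing a F{\o}lner set in every fibre over $L_w$.

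The main obstacle is that $DB_w^{-1}$ is not yet the column of a multisection \emph{based in the unit space}: its $i_u$-ladder is $C_{i_u,i_u}B_w^{-1}=B_w^{-1}$, a bisection lying off $\GU$, whereas condition (3) of Definition \ref{defn_normal folner set} forces the base level to be the clopen set $L_w\subseteq\GU$. I would resolve this by declaring a new multisection $\CC^w$ with base level $L_w$ and ladders $C_{i,i_u}B_w^{-1}$ for $i\neq i_u$; its column then agrees with $DB_w^{-1}$ except that the off-diagonal element $\eta^{-1}$ is replaced by the unit $v$ in each fibre. This is a two-element perturbation of a F{\o}lner set, and to absorb it I would, at the very start, invoke Lemma \ref{lem: Folner length arbitrary long} to pick $K'\supseteq K$ so large that every $(K',\epsilon')$-F{\o}lner set in $\CG_u$ has cardinality $>n$; choosing $n$ large and $\epsilon'$ small in advance, Lemma \ref{lem: shrink folner set}(2) then guarantees that the swapped column is still $(K,\epsilon)$-F{\o}lner over $L_w$. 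This interplay -- transporting by a bisection, re-basing into $\GU$ at the cost of a bounded perturbation, and killing the perturbation with arbitrarily long F{\o}lner sets -- is the crux of the argument.

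Finally I would assemble $S$ exactly as in Proposition \ref{prop:normal folner set}: the clopen sets $\{L_w:w\in\GU\}$ cover the compact space $\GU$, so finitely many $L_{w_1},\dots,L_{w_m}$ suffice; disjointifying gives a clopen partition $\GU=\bigsqcup_{l=1}^m P_l$ with $P_l\subseteq L_{w_l}$, and restricting each $\CC^{w_l}$ to the base level $P_l$ produces compact open multisections $\CC_l=\{C^l_{i,j}:i,j\in F_l\}$ whose base levels $P_l$ partition $\GU$. Setting $S=\bigcup_{l=1}^m\bigsqcup_{i\in F_l}C^l_{i,i_l}$, disjointness of the $P_l$ shows that for $v\in P_l$ the fibre $Sv$ is precisely the swapped transported column over $L_{w_l}$ at $v$, hence $(K,\epsilon)$-F{\o}lner, so $S$ is a normal $(K,\epsilon)$-F{\o}lner set in the sense of Definition \ref{defn_normal folner set}.
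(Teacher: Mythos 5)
Your proposal is correct and follows essentially the same route as the paper's proof: upgrade to ubiquitous fiberwise amenability via minimality, apply Lemma \ref{lem: folner slice} at the free unit, transport the resulting column over a clopen partition of $\GU$ by compact open bisections, and absorb the two-element re-basing perturbation by combining Lemma \ref{lem: Folner length arbitrary long} with Lemma \ref{lem: shrink folner set}. The only cosmetic differences are the orientation of the transporting bisections and that the paper partitions $\GU\setminus C_{i_u,i_u}$ while keeping the original column over the base level.
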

\begin{proof}
First, enlarge $K$ if necessary, Lemma \ref{lem: Folner length arbitrary long} implies that $|F|>2/\epsilon$ for any $(K, \epsilon)$-F{\o}lner set $F$. Now let $0<\delta<\epsilon$ be small enough such that if $F$ is $(K, \delta)$-F{\o}lner then $F'$ is $(K, \epsilon)$-F{\o}lner whenever $|F\Delta F'|=2<\epsilon|F|$. 
Then \ref{thm:minimal-fiberwise-amenability} shows that $\CG$ is actually ubiquitous fiberwise amenable and thus Lemma \ref{lem: folner slice} implies that there is a compact open multisection $\CC=\{C_{i, j}: i, j\in F\}$ and an $i_u\in F$ such that 
\begin{enumerate}
	\item $u\in C_{i_u, i_u}$ and
	\item if we write $D=\bigsqcup_{i\in F}C_{i, i_u}$ then $Dv$ is $(K, \delta)$-F{\o}lner for any $v\in C_{i_u, i_u}$.
\end{enumerate}
Without loss of generality, we write $F=\{1, \dots, n\}$ for some $n\in \N^+$ and identify $i_u=0$.
Now using the fact that $\CG$ is minimal,  and $\GU\setminus C_{1, 1}$ is still a compact set, there is a family of compact open bisections $\{B_1,\dots, B_m\}$ such that $\{s(B_1),\dots, s(B_m)\}$ form a clopen partition of $\GU\setminus C_{1, 1}$ and $r(B_l)\subset C_{1, 1}$ for all $1\leq l\leq m$.
Now for any $1\leq l\leq m$, define $C^l_{0, 0}=s(B_l)$ and $C^l_{i, 0}=C_{i, 0}B_l$ for all $1\leq i\leq n-1$. Then, define $C^l_{i, j}=C^l_{i, 0}(C^l_{j, 0})^{-1}$ for any $0\leq i, j\leq n-1$. We write $F_l=\{0, \dots, n-1\}$ and observe that $\{C^l_{i, j}: i, j\in F_l\}$ is a multisection.

Finally, write $D_l=\bigsqcup_{i\in F_l}C^l_{i, 0}$. Then for any $w\in C^l_{0, 0}=s(B_l)$, let $\gamma\in B_l$ with $s(\gamma)=w$ and observe that $|D_lw\Delta D\gamma|=2$. This implies that $D_lw$ is $(K, \epsilon)$-F{\o}lner because $D\gamma$, as a right shift of $Dr(\gamma)$, is $(K, \delta)$-F{\o}lner.
Then, by our construction, it is not hard to see \[S=(\bigcup_{l=1}^m\bigsqcup_{i\in F_l}C^l_{i, 0})\cup (\bigsqcup_{j\in F}C_{j, 0})\]
is a normal $(K, \epsilon)$-F{\o}lner set.
\end{proof}

Another useful family of groupoids are \emph{almost finite} groupoids. They were originally introduced by Matui in \cite{Matui} to study Homology theory and topological full groups of groupoids. We record its definition as follows.

\begin{defn}\cite[Definition 6.2]{Matui}\label{defn: almost finiteness}
	Let $\CG$ be a
	groupoid with a compact unit space. $\CG$ is called almost
	finite if for any compact set $K$ in $\CG$ and $\epsilon>0$ there
	is a compact open elementary subgroupoid $\CH$ of $\CG$ with $\HU=\GU$
	such that 
	$|K\CH u\setminus\CH u|<\epsilon|\CH u|$  for any $u\in\GU$.
\end{defn}

\begin{prop}\label{prop: almost finite folner sequence}
	Let $\CG$ be an almost finite groupoid. For any compact set $K$ in $\CG$ and $\epsilon>0$, there is a normal $(K, \epsilon)$-F{\o}lner set.
\end{prop}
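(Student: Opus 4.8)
The plan is to use the compact open elementary subgroupoid supplied by almost finiteness directly as the underlying set $S$, after reindexing its levels so that the chosen base levels exhaust $\GU$. First I would normalize $K$: since enlarging $K$ only strengthens the F{\o}lner condition, I may assume $K=K^{-1}$, and I record that $\|K\|<\infty$ by Remark \ref{5.2}. Set $\epsilon'=\epsilon/(1+\|K\|)$. Applying Definition \ref{defn: almost finiteness} to $K$ and $\epsilon'$ produces a compact open elementary subgroupoid $\CH=\bigcup\CC$, where $\CC=\{C^l_{i,j}:i,j\in F_l,\,l\in I\}$ is a compact open castle with $\HU=\GU$; thus $\GU=\bigsqcup_{l\in I}\bigsqcup_{i\in F_l}C^l_{i,i}$ and $|K\CH u\setminus\CH u|<\epsilon'|\CH u|$ for every $u\in\GU$.

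The key structural observation is that Definition \ref{defn_normal folner set} does \emph{not} require the multisections $\CC_l$ to be mutually disjoint; it asks only that each be a multisection and that the designated base levels partition $\GU$. I would therefore reindex by pairs $\lambda=(l,k)$ with $l\in I$ and $k\in F_l$, taking $\CC_\lambda:=\CC_l$ with designated base $i_\lambda:=k$. Condition (1) is then immediate, since each $\CC_\lambda$ is literally the multisection $\CC_l$. Condition (3) holds because $\bigsqcup_\lambda C^\lambda_{i_\lambda,i_\lambda}=\bigsqcup_{l\in I}\bigsqcup_{k\in F_l}C^l_{k,k}=\HU=\GU$. For condition (2), summing the base columns over all $\lambda$ recovers every ladder of every tower, so $S:=\bigcup_\lambda\bigsqcup_{i\in F_\lambda}C^\lambda_{i,i_\lambda}=\bigcup_{l\in I}\bigsqcup_{i,k\in F_l}C^l_{i,k}=\bigcup\CC=\CH$; in particular $S$ is compact open and $Su=\CH u$ for every $u\in\GU$.

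It remains to verify the F{\o}lner condition (4), which is where the only genuine work lies, since almost finiteness controls only the one-sided boundary $\partial_K^+(\CH u)=K\CH u\setminus\CH u$, whereas Definition \ref{def:groupoid-Folner} uses the full boundary $\partial_K=\partial_K^+\cup\partial_K^-$. Here I would exploit the symmetry $K=K^{-1}$: if $x\in\partial_K^-(\CH u)$ then $yx\in K\CH u\setminus\CH u=\partial_K^+(\CH u)$ for some $y\in K$, so $x=y^{-1}(yx)\in K\partial_K^+(\CH u)$, whence $\partial_K^-(\CH u)\subset K\partial_K^+(\CH u)$ and $|\partial_K^-(\CH u)|\leq\|K\|\,|\partial_K^+(\CH u)|$ by Remark \ref{5.2} (all sets involved lie in $\CG_u$). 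Combining gives $|\partial_K(\CH u)|\leq(1+\|K\|)|K\CH u\setminus\CH u|<(1+\|K\|)\epsilon'|\CH u|=\epsilon|Su|$, so $Su$ is $(K,\epsilon)$-F{\o}lner for every $u\in\GU$, and $S=\CH$ is a normal $(K,\epsilon)$-F{\o}lner set. The main obstacle is precisely this passage from the one-sided boundary control of almost finiteness to the two-sided boundary of Definition \ref{def:groupoid-Folner}, which is what forces the preliminary symmetrization of $K$ and the choice $\epsilon'=\epsilon/(1+\|K\|)$; everything else is bookkeeping about re-presenting the levels of a single castle as overlapping base columns.
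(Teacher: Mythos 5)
Your proposal is correct and follows essentially the same route as the paper: both take the compact open elementary subgroupoid $\CH$ supplied by almost finiteness and reindex its castle by pairs $(l,i)$ so that every level serves as a designated base, making $S=\CH$ a normal F{\o}lner set. The one substantive difference is that you additionally symmetrize $K$ and rescale $\epsilon$ to pass from the one-sided bound $|K\CH u\setminus\CH u|<\epsilon'|\CH u|$ of Definition \ref{defn: almost finiteness} to the two-sided boundary $\partial_K$ required by Definition \ref{def:groupoid-Folner}; the paper's proof leaves this step implicit, so your argument is, if anything, more complete.
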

\begin{proof}
	Let $K\subset \CG$ be a compact set and $\epsilon>0$. Since $\CG$ is almost finite, there is a compact open elementary subgroupoid $\CH$ with $\HU=\GU$
	such that 
	$|K\CH u\setminus\CH u|<\epsilon|\CH u|$  for any $u\in\GU$. We claim that $\CH$ itself is a normal $(K, \epsilon)$-F{\o}lner set. Indeed, first, by considering the fundamental domain of $\CH$, there is a compact open castle $\CC=\{C_{i,j}^{l}:i,j\in F_{l},l\in I\}$ such that $\CH=\bigcup\CC=\bigsqcup_{l\in I}\bigsqcup_{i,j\in F_{l}}C_{i,j}^{l}$. Define a new index set $J=\{p=(l, i): i\in F_l, l\in I\}$ and for each $p\in J$ define $E_p=F_l$ and a particular index $d_p\in E_p$ by $d_p=i$. Using these new notations, observe that $\CH=\bigsqcup_{p\in J}\bigsqcup_{j\in E_p}C^l_{j, d_p}$, which satisfies Definition \ref{defn_normal folner set}.
\end{proof}

Now we announce the following definition, which is an analogue of the concept of F{\o}lner sequences in countable discrete groups.
\begin{defn}\label{defn: folner sequence for groupoids}
Let $\CG$ be a  groupoid. We say a sequence of normal F{\o}lner sets $\{S_n: n\in \N\}$ is  a F{\o}lner sequence in $\CG$ if for any compact set $K$ in $\CG$ and any $u\in \GU$ one has 
\[\frac{|KS_nu\setminus S_nu|}{|S_nu|}\to 0.\]
\end{defn}

In the group action case, if the acting group is amenable then a F{\o}lner sequence of it naturally induces a F{\o}lner sequence of the transformation groupoid of the action.

\begin{eg}\label{eg: canonical folner sequence}
For the transformation groupoid $X\rtimes\Gamma$ case in which $\Gamma$ is a countable discrete amenable group and $X$ is the Cantor set. Suppose $K\subset \Gamma$ is a finite set and $\epsilon>0$. Let  $F$ be a $(K, \epsilon)$-F{\o}lner set containing $e_\Gamma$, and choose a clopen partition $\{V_1,\dots, V_m\}$ of $X$ such that $(F, V_l)$ are \emph{towers} for all $l\leq m$ in the sense that $\{sV_l: s\in F\}$ is disjoint. Now for $s, t\in F$ define a bisection $C^l_{st}=\{(st^{-1}x, st^{-1}, x): x\in tV_l\}$. This implies that  $\CC_l=\{C^l_{st}: s, t\in F\}$ is a multisection for each $1\leq l\leq m$. Observe that $S=\{(\gamma x, \gamma, x): x\in X, \gamma\in F\}=\bigcup_{l=1}^m\bigsqcup_{s\in F}C^l_{s, e_\Gamma}$ is a compact open normal F{\o}lner set. Now one can simply define $S_{n, \Gamma, X}=\{(\gamma x, \gamma, x): x\in X, \gamma\in F_n\}$ where  $\{F_n\subset \Gamma: n\in \N\}$ is a usual F{\o}lner sequence of $\Gamma$ such that $e_\Gamma\in F_n$ for each $n\in \N$. Therefore, all $S_{n, \Gamma, X}$ forms a F{\o}lner sequence of $X\rtimes\Gamma$. We remark that each $S_{n, \Gamma, X}$ has the homogeneity in the sense that each partial source fiber $S_nx$ has the same geometric structure with $F_n$ (see Example \ref{eg: amenability for actions}).
\end{eg}

However, in general groupoids case, because of its fiberwise nature, one cannot expect that such a geometric homogeneity emerges in F{\o}lner sequences. Nevertheless, the normal F{\o}lner sets yields the local homogeneity.
\begin{prop}\label{prop: exist folner sequence}
Let $\CG$ be a groupoid such that for any compact $K$ in $\CG$ and $\epsilon>0$ there is a normal $(K, \epsilon)$-F{\o}lner set (this includes cases in Propositions \ref{prop:normal folner set}, \ref{prop: minimal groupoid normal folner} and \ref{prop: almost finite folner sequence}). There exists a F{\o}lner sequence $\{S_n: n\in \N\}$ for $\CG$.
\end{prop}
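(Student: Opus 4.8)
The plan is to exhaust $\CG$ by compact sets and, at each stage, invoke the standing hypothesis with a Følner parameter that shrinks to zero. Since $\CG$ is $\sigma$-compact, locally compact and Hausdorff, it admits an increasing exhaustion $K_1\subset K_2\subset\cdots$ by compact sets satisfying $\bigcup_{n\in\N}K_n=\CG$ and $K_n\subset\interior{K_{n+1}}$ for every $n$. For each $n\in\N$ I would apply the hypothesis to the compact set $K_n$ and the tolerance $1/n$ to produce a normal $(K_n,1/n)$-Følner set $S_n$; the resulting sequence $\{S_n:n\in\N\}$ of normal Følner sets is the candidate Følner sequence.

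It then remains to verify the condition in Definition \ref{defn: folner sequence for groupoids}, namely that $|KS_nu\setminus S_nu|/|S_nu|\to 0$ for every compact $K\subset\CG$ and every $u\in\GU$. The crucial point is absorption: the open sets $\interior{K_{n+1}}$ cover $\CG$, so compactness of $K$ together with the nesting yields an $N$ with $K\subset K_N$, and hence $K\subset K_n$ for all $n\geq N$. For such $n$, monotonicity of the left outer boundary in its compact parameter (if $K\subset K_n$ then $KS_nu\subset K_nS_nu$) gives
\[
KS_nu\setminus S_nu=\partial_K^+(S_nu)\subset\partial_{K_n}^+(S_nu)\subset\partial_{K_n}(S_nu),
\]
so that
\[
\frac{|KS_nu\setminus S_nu|}{|S_nu|}\leq\frac{|\partial_{K_n}(S_nu)|}{|S_nu|}\leq\frac1n,
\]
the last inequality being exactly item (4) of Definition \ref{defn_normal folner set} for the normal $(K_n,1/n)$-Følner set $S_n$. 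As this bound holds for all $n\geq N$ and the expression is nonnegative, the limit is $0$, as desired.

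The only ingredient that is not pure bookkeeping is the existence of an exhaustion whose members absorb every compact set, which is the standard structure theory of $\sigma$-compact, locally compact Hausdorff spaces and uses no groupoid-specific feature. I do not expect a genuine obstacle here; the one point to be careful about is to choose the exhaustion with the interior-nesting $K_n\subset\interior{K_{n+1}}$ (rather than merely $\bigcup_n K_n=\CG$), since this is precisely what guarantees that an arbitrary compact $K$ lands inside a single $K_N$ and thereby makes the uniform bound $1/n$ applicable for all large $n$.
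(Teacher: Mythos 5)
Your proposal is correct and follows essentially the same route as the paper: exhaust $\CG$ by an increasing sequence of compact sets absorbing every compact $K$, take a normal $(K_n,\epsilon_n)$-F{\o}lner set for each stage with $\epsilon_n\to 0$, and use monotonicity of the left outer $K$-boundary in $K$ to transfer the bound. The only cosmetic difference is that the paper arranges absorption by taking the $K_n$ compact \emph{open} (available since $\CG$ is ample), whereas you use the standard interior-nesting exhaustion $K_n\subset\interior{K_{n+1}}$; both serve the same purpose.
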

\begin{proof}
Let $\{K_n\}$ be an increasing sequence of compact open sets in $\CG$ such that $\bigcup_{n=0}^\infty K_n=\CG$ with $K_0=\GU$. Let $\{\epsilon_n: n\in \N\}$ be a decreasing sequence of positive numbers such that $\lim_{n\to \infty}\epsilon_n=0$. Then the assumption implies that  for each $n$ there is a normal $(K_n, \epsilon_n)$-F{\o}lner set $S_n$ such that $|K_nS_nu|\leq (1+\epsilon_n)|S_nu|$ for each $u\in \GU$. Now let $K$ be a compact set in $\CG$. Because each $K_n$ is compact open, there is an $N$ such that $K\subset K_n$ for any $n>N$. Then for each $u\in \GU$ one has 
\[|KS_n\setminus S_nu|\leq |K_nS_nu\setminus S_nu|\leq \epsilon_n|S_nu|.\]
This thus implies that $\{S_n: n\in \N\}$ is a F{\o}lner sequence.
\end{proof}

\subsection{Banach densities for groupoids}
The classical Ornstein-Weiss quasi-tiling theorem says that the underlying space for a free p.m.p action of an amenable group can be decomposed into towers with F{\o}lner shapes modulo an arbitrary small set with respect to the invariant measure (see e.g. \cite{Kerr-L}). This result has been generalized in \cite{C-J-K-M-S-T} and \cite{D-G} in p.m.p and topological setting, respectively, by considering Banach densities so that one can deal with many invariant measures thanks to the good relation between Banach densities and invariant measures (see Proposition \ref{prop:density vs measure} below). One of main goals in this paper is to prove a topological groupoid version of Ornstein-Weiss quasi tilling theorem, which means one can always find a compact open castle with arbitrary good fiberwise F{\o}lner shape, that covers almost all of the unit space. Such a good property that admits an arbitrary ``good'' quasi-tilling in this paper is called \emph{almost finiteness in measure}.

\begin{defn}\label{defn: af in measure}
	Let $\CG$ be a groupoid. We say $\CG$ is almost finite in measure if for any compact set $K\subset \CG$ and $\epsilon>0$ there is a compact open elementary subgroupoid $\CH$ such that
	\begin{enumerate}
		\item $|K\CH u\setminus \CH u|<\epsilon|\CH u|$ for any $u\in \HU$ and
		\item $\mu(\HU)>1-\epsilon$ for any $\mu\in M(\CG)$.
	\end{enumerate} 
\end{defn}
Suppose $\CG$ is a transformation groupoid of an action of an amenable group on the Cantor set, the above definition coincides with the notion with the same name introduced in \cite{D-G}. This validate the name of our notion. In addition, our almost finiteness in measure is a weaker version of Matui's almost finiteness recorded in Definition \ref{defn: almost finiteness}.

Following the strategy in \cite{C-J-K-M-S-T} and \cite{D-G}, the first step for us is to develop a theory of Banach densities for (ubiquitous) fiberwise amenable groupoids $\CG$.  The proposed density should have good relation with $\CG$-invariant probability measures as in group action case. See Proposition \ref{prop:density vs measure} below.  However, due to lack of the global homogeneity, a simple generalization of original Banach densities to the groupoid setting may be ill-behaved as we will see in Remark \ref{rem: bad density}. We first recall the definition of Banach densities for group actions on compact Hausdorff spaces (see \cite{D-G} for example) for comparison.

\begin{defn}\label{defn: density group action}
Let $\Gamma$ be a countable discrete group and $X$ a compact Hausdorff space. Let $\alpha: \Gamma\curvearrowright X$ be a continuous action.  For a set $A\subset X$ we define
lower and upper Banach densities of $A$ (with respect to the action $\alpha$) by 
\[\barbelow{D}(A)=\sup_{\substack{F\subset \Gamma \\ F\ \text{finite}}}\inf_{x\in X}\frac{1}{|F|}\sum_{s\in F}1_A(sx)\ \ \ \text{and}\ \ \ \bar{D}(A)=\inf_{\substack{F\subset \Gamma \\ F\ \text{finite}}}\sup_{x\in X}\frac{1}{|F|}\sum_{s\in F}1_A(sx).\]
\end{defn}

\begin{prop}(\cite[Proposition 3.3]{D-G})\label{prop:density vs measure}
	Let $\alpha: \Gamma\curvearrowright X$ be a continuous action of a countable discrete group $\Gamma$ on a compact Hausdorff space. Let $A$ be a closed set in $X$. Then $\bar{D}(A)\geq \sup_{\mu\in M_\Gamma(X)}\mu(A)$. If $\Gamma$ is amenable, one actually has $\bar{D}(A)=\sup_{\mu\in M_\Gamma(X)}\mu(A)$.
\end{prop}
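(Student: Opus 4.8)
The plan is to establish the two inequalities separately: the inequality $\bar{D}(A)\geq \sup_{\mu\in M_\Gamma(X)}\mu(A)$ holds for an arbitrary Borel set $A$ and needs neither amenability nor closedness, whereas the reverse inequality will use both hypotheses. For the first inequality I would fix any $\mu\in M_\Gamma(X)$ and any finite $F\subset\Gamma$, and integrate the function $x\mapsto \frac{1}{|F|}\sum_{s\in F}1_A(sx)$ against $\mu$. Using $\int_X 1_A(sx)\,d\mu(x)=\mu(s^{-1}A)=\mu(A)$ for each $s\in F$ by invariance, the $\mu$-average of this function is exactly $\mu(A)$; since the supremum of a function over $X$ dominates its average, $\sup_{x\in X}\frac{1}{|F|}\sum_{s\in F}1_A(sx)\geq \mu(A)$. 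Taking the infimum over finite $F$ yields $\bar{D}(A)\geq\mu(A)$, and then the supremum over $\mu$ gives the claim.

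For the reverse inequality when $\Gamma$ is amenable, I would fix a (left) Følner sequence $\{F_n\}$ and set $d=\bar{D}(A)$. Because $d$ is an infimum over all finite subsets, $\sup_{x}\frac{1}{|F_n|}\sum_{s\in F_n}1_A(sx)\geq d$ for every $n$, so one may choose $x_n\in X$ with $\frac{1}{|F_n|}\sum_{s\in F_n}1_A(sx_n)\geq d-1/n$. Forming the empirical measures $\nu_n=\frac{1}{|F_n|}\sum_{s\in F_n}\delta_{sx_n}$, one has $\nu_n(A)\geq d-1/n$. Since $X$ is compact, the probability measures on $X$ are weak-$*$ compact, so I would pass to a subsequence converging weak-$*$ to a probability measure $\mu$.

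It then remains to verify invariance of $\mu$ and the mass bound $\mu(A)\geq d$. Invariance is the usual consequence of the Følner condition: for $g\in\Gamma$ and $f\in C(X)$ one has $\int f\,d(g_*\nu_n)=\frac{1}{|F_n|}\sum_{t\in gF_n}f(tx_n)$, whose difference with $\int f\,d\nu_n$ has all terms indexed by $gF_n\cap F_n$ cancel, leaving a quantity bounded in absolute value by $\|f\|_\infty\,|gF_n\Delta F_n|/|F_n|\to 0$; since pushforward is weak-$*$ continuous, $g_*\mu=\mu$ for every $g$, so $\mu\in M_\Gamma(X)$. For the mass bound, this is exactly where closedness of $A$ enters: the portmanteau theorem gives $\mu(A)\geq\limsup_n\nu_n(A)\geq d$ for the closed set $A$, whence $\sup_{\mu\in M_\Gamma(X)}\mu(A)\geq d=\bar{D}(A)$.

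The step requiring the most care is the last one, namely invoking upper semicontinuity of $\mu\mapsto\mu(A)$ at closed sets (the inequality $\limsup_n\nu_n(A)\leq\mu(A)$) in the correct direction. This is genuinely where closedness is indispensable: for a merely Borel $A$ the weak-$*$ limit can lose mass across the boundary of $A$, so $\mu(A)$ could drop below $d$ and the equality can fail. Everything else — the integration argument, the weak-$*$ compactness, and the Følner computation for invariance — is routine.
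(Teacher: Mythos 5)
Your proof is correct; the paper itself offers no proof of this proposition, simply citing \cite[Proposition 3.3]{D-G}, and your argument (integration of $x\mapsto \frac{1}{|F|}\sum_{s\in F}1_A(sx)$ against an invariant measure for the easy inequality, then weak-$*$ limits of the empirical measures $\nu_n=\frac{1}{|F_n|}\sum_{s\in F_n}\delta_{sx_n}$ along a F{\o}lner sequence together with the portmanteau inequality for closed sets for the converse) is exactly the standard one given in that reference. The only cosmetic caveat is that for a general compact Hausdorff, possibly non-metrizable, $X$ one should extract a weak-$*$ convergent subnet rather than a subsequence; the F{\o}lner cancellation bound $\|f\|_\infty|gF_n\Delta F_n|/|F_n|$ and the Urysohn/regularity form of the estimate $\limsup\nu_n(A)\leq\mu(A)$ go through verbatim.
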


Motivated by Definition \ref{defn: density group action}, it is natural to try the following definition for groupoids, i.e.,
\[\barbelow{D}_\CG(A)=\sup_{\substack{F\subset \CG \\ F\ \text{compact}}}\inf_{u\in \GU}\frac{1}{|Fu|}\sum_{\gamma\in F}1_A(r(\gamma))\] and
\[ \bar{D}_\CG(A)=\inf_{\substack{F\subset \CG \\ F\ \text{compact}}}\sup_{u\in \GU}\frac{1}{|Fu|}\sum_{\gamma\in F}1_A(r(\gamma)).\]
However, as we will see in Remark \ref{rem: bad density} by using the following example, the fiberwise nature of groupoids makes the proposed definition of groupoid Banach densities above ill-behaved so that there is no good relation to invariant measures even for transformation groupoids.

\begin{eg}\label{eg: ill-behaved density}
	Let $\alpha:\Z\curvearrowright X$ be a minimal action where $X$ is the Cantor set. Fix a small clopen set $V$ and a decreasing sequence $V=V_0\supset V_1\supset\cdots$ of clopen sets such that $\{V_n, 1\cdot V_n, \dots, n\cdot V_n\}$ are disjoint. For each $V_n$, by minimality, there are a clopen partition $\CP_n=\{O^n_1,\dots, O^n_{m_n}\}$ of $X$ finer than the compact open partition $\{X\setminus \bigsqcup_{k=0}^n k\cdot V_n, V_n, 1\cdot V_n, \dots, n\cdot V_n \}$ and $s^n_1, \dots, s^n_{m_n}\in \Z$ such that $s^n_kO^n_k\subset V_n$ and if  $O^n_l\subset k\cdot V_n$ then $s^n_l\neq -k$ for any $0\leq k\leq n$ and $1\leq l\leq m_n$.
	
	Now for each $n$ write $F_n=\{0,\dots, n\}$ and define $T_n=\{(\gamma x, \gamma, x):\gamma\in \{0\}\cup s^n_{k}+F_n, x\in O^n_k, 1\leq k\leq m_n\}$. Then $\{T_n: n\in \N\}$ is a clopen F{\o}lner sequence satisfying Definition \ref{defn: folner sequence for groupoids}. 
\end{eg}

\begin{rmk}\label{rem: bad density}
Let $\CG=X\rtimes \Z$ as in Example \ref{eg: ill-behaved density}.  One may make the original clopen $V=V_0$ small enough such that $\sup_{\mu\in M_\Z(X)}\mu(V)<1/3$ and thus its complement $A=X\setminus V_1$ satisfies $\inf_{\mu\in M_\Z(X)}\mu(A)>2/3$. Then for $T_1$ and any $x\in X$, one always has $\sum_{\gamma\in T_1x}1_A(r(\gamma))\leq 2$ while $|T_1x|=3$, which implies that \[\bar{D}_\CG(A)=\inf_{\substack{F\subset \CG \\ F\ \text{compact}}}\sup_{u\in \GU}\frac{1}{|Fu|}\sum_{\gamma\in F}1_A(r(\gamma))\leq 2/3<\inf_{\mu\in M_\Z(X)}\mu(A).\]
One may want to revise the definition of proposed densities above by replacing the compact set $F$ by compact open bisections $B$ with $s(B)=r(B)=\GU$, which essentially are the Banach densities for the natural action of the topological full group $[[\CG]]$ on $\GU$. This indeed will provide good relation to $\CG$-invariant measures by Proposition \ref{prop:density vs measure}. But it is unknown how to make normal F{\o}lner sets in the groupoid get involved in the work when $\CG$ is ubiquitous fiberwise amenable.
\end{rmk}

On the other hand, in the case of a dynamical system $\alpha: \Gamma \curvearrowright X$ of amenable acting group $\Gamma$ with a F{\o}lner sequence $\{F_n: n\in \N\}$ on the compact Hausdorff space $X$,  for densities defined in Definition \ref{defn: density group action} and any $A\subset X$ one actually has 
\[\bar{D}(A)=\lim_{n\to\infty}\sup_{x\in X}\frac{1}{|F_n|}\sum_{s\in F_n}1_A(sx)\ \ \text{and}\ \ \barbelow{D}(A)=\lim_{n\to\infty}\inf_{x\in X}\frac{1}{|F_n|}\sum_{s\in F_n}1_A(sx).\]
See e.g. \cite{D-G}. Motivated by this, we naturally have the following definition of groupoid densities.

\begin{defn}\label{defn: densities for groupoid} Let $\CG$ be a groupoid and $\CS=\{S_n: n\in\N\}$  a F{\o}lner sequence of $\CG$, we define the following densities for any set $A$ in $\GU$ with respect to $\CS$.
	\[\barbelow{D}^+_{\CS}(A)=\limsup_{n\to \infty}\inf_{u\in \GU}\frac{1}{|S_nu|}\sum_{\gamma\in S_nu}1_A(r(\gamma))\]
	\[\barbelow{D}^-_{\CS}(A)=\liminf_{n\to \infty}\inf_{u\in \GU}\frac{1}{|S_nu|}\sum_{\gamma\in S_nu}1_A(r(\gamma))\]
	\[\bar{D}^+_{\CS}(A)=\limsup_{n\to \infty}\sup_{u\in \GU}\frac{1}{|S_nu|}\sum_{\gamma\in S_nu}1_A(r(\gamma)).\]
	\[\bar{D}^-_{\CS}(A)=\liminf_{n\to \infty}\sup_{u\in \GU}\frac{1}{|S_nu|}\sum_{\gamma\in S_nu}1_A(r(\gamma)).\]
\end{defn}

\begin{rmk}
We warn that the densities defined above depends on the choice on the F{\o}lner sequence $\CS$. Then, by definition, observe $\bar{D}^+_{\CS}(A)=1-\barbelow{D}^-_{\CS}(\GU\setminus A)$ and $\bar{D}^-_{\CS}(A)=1-\barbelow{D}^+_{\CS}(\GU\setminus A)$. Finally, the definition of normal F{\o}lner set implies that $r|_{S_nu}$ is injective for any $n\in \N$ and $u\in \GU$. This implies that $|A\cap r(S_nu)|=\sum_{\gamma\in S_nu}1_A(r(\gamma))$, which will be used frequently without no further mention.
\end{rmk}

The following is our first result on the relation between our densities and $\CG$- invariant measures. We denote by $M(\GU)$ the set of all Borel probability measures on $\GU$ and by $M(\CG)$ the set of all $\CG$-invariant Borel probability measures on $\GU$.

\begin{lem}\label{lem: density less measure}
Let $\CG$ be a groupoid that admits a F{\o}lner sequence $\CS=\{S_n: n\in \N\}$ and $A$ a compact open set in $\GU$. Then there is a $\nu\in M(\CG)$ such that $\bar{D}^+_\CS(A)=\nu(A)\leq \sup_{\mu\in M(\CG)}\mu(A)$.
\end{lem}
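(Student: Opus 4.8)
The plan is to realize $\bar{D}^+_{\CS}(A)$ as $\nu(A)$ for a weak\textsuperscript{*} limit $\nu$ of empirical measures built along the F{\o}lner sequence, and then to verify that this $\nu$ is $\CG$-invariant by exploiting the F{\o}lner condition. First, for each $n$ and each $u\in\GU$ I would form the empirical probability measure
\[\lambda_n^u=\frac{1}{|S_nu|}\sum_{\gamma\in S_nu}\delta_{r(\gamma)}\in M(\GU),\]
which is well-defined since $r|_{S_nu}$ is injective, and for which $\lambda_n^u(A)=\frac{1}{|S_nu|}\sum_{\gamma\in S_nu}1_A(r(\gamma))$. Because $S_n$ is a normal F{\o}lner set, $\GU$ decomposes into its finitely many clopen levels, on each of which $|S_nu|$ is constant and each map $u\mapsto r(C^l_{i,i_l}u)$ is continuous; as $A$ is clopen this makes $u\mapsto\lambda_n^u(A)$ locally constant, so the supremum $\sup_u\lambda_n^u(A)$ is attained.

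I would then pass to a subsequence $(n_j)$ with $\sup_u\lambda_{n_j}^u(A)\to\bar{D}^+_{\CS}(A)$, choose maximizers $u_j$, set $\lambda_j:=\lambda_{n_j}^{u_j}$ (so $\lambda_j(A)\to\bar{D}^+_{\CS}(A)$), and use weak\textsuperscript{*} compactness of $M(\GU)$ to extract a subsequence converging to some $\nu\in M(\GU)$. Since $A$ is clopen, $1_A$ is continuous, so $\lambda_j(A)\to\nu(A)$ along this subsequence and hence $\nu(A)=\bar{D}^+_{\CS}(A)$.

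The heart of the argument is to show $\nu\in M(\CG)$, for which it suffices to check $\nu(s(B))=\nu(r(B))$ for every compact open bisection $B$. Fixing such a $B$ and setting $K=B\cup\GU$, I would consider for each $j$ the transfer map $\Phi$ on $\{\gamma\in S_{n_j}u_j: r(\gamma)\in s(B)\}$ sending $\gamma$ to $\beta_\gamma\gamma$, where $\beta_\gamma$ is the unique element of $B$ with $s(\beta_\gamma)=r(\gamma)$. This map is injective (as $B$ is a bisection), preserves the source $u_j$, sends its range into $r(B)$, and lands in $BS_{n_j}u_j$. Splitting its image according to whether it lies in $S_{n_j}u_j$ or not yields
\[|\{\gamma\in S_{n_j}u_j: r(\gamma)\in s(B)\}|\le |\{\gamma\in S_{n_j}u_j: r(\gamma)\in r(B)\}|+|KS_{n_j}u_j\setminus S_{n_j}u_j|,\]
so that $\lambda_j(s(B))\le\lambda_j(r(B))+\frac{|KS_{n_j}u_j\setminus S_{n_j}u_j|}{|S_{n_j}u_j|}$. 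The F{\o}lner property forces the error term to $0$, and running the same argument with $B^{-1}$ gives the reverse inequality, whence $\lambda_j(s(B))-\lambda_j(r(B))\to 0$. As $s(B)$ and $r(B)$ are clopen, weak\textsuperscript{*} convergence gives $\nu(s(B))=\nu(r(B))$.

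This identity for all compact open bisections is precisely the statement that $\nu$ is $\CG$-invariant (the ample structure guarantees such bisections suffice), so $\nu\in M(\CG)$; combining with $\nu(A)=\bar{D}^+_{\CS}(A)$ gives $\bar{D}^+_{\CS}(A)=\nu(A)\le\sup_{\mu\in M(\CG)}\mu(A)$, as required. The main obstacle I expect is the invariance step: arranging the bookkeeping in the transfer argument so that the discrepancy between the $s(B)$- and $r(B)$-counts is controlled \emph{exactly} by the F{\o}lner boundary $|KS_nu\setminus S_nu|$. Everything else, namely the attainment of the supremum, weak\textsuperscript{*} compactness, and the continuity of $1_A$, is routine.
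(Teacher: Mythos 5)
Your proposal is correct and follows essentially the same route as the paper: form the empirical measures $\frac{1}{|S_{n_k}u_k|}\sum_{\gamma\in S_{n_k}u_k}\delta_{r(\gamma)}$ along a subsequence realizing the $\limsup$, take a weak\textsuperscript{*} cluster point $\nu$, and use clopenness of $A$ to get $\nu(A)=\bar{D}^+_{\CS}(A)$. The only difference is cosmetic: the paper uses approximate maximizers $u_k$ (within $1/k$) rather than arguing attainment of the supremum, and it delegates the invariance of $\nu$ to a citation of \cite[Proposition 5.9]{M-W}, whereas you spell out the transfer-map argument controlling $\lambda_j(s(B))-\lambda_j(r(B))$ by the F{\o}lner boundary.
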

\begin{proof}
For any $n\in \N$, define
	\[\bar{D}^+_{S_n}(A)=\sup_{u\in \GU}\frac{1}{|S_nu|}\sum_{\gamma\in S_nu}1_A(r(\gamma)).\]
Then by definition of $\bar{D}^+_\CS(A)$, there is a subsequence $\bar{D}^+_{S_{n_k}}(A)$ converging to $\bar{D}^+_\CS(A)$. For each $k$, choose a $u_k\in \GU$ such that
\[|\bar{D}_{S_{n_k}}(A)-\frac{1}{|S_{n_k}u_k|}\sum_{\gamma\in S_{n_k}u_k}1_A(r(\gamma))|<\frac{1}{k}.\]
This actually implies that
\[\bar{D}^+_\CS(A)=\lim_{k\to \infty}\frac{1}{|S_{n_k}u_k|}\sum_{\gamma\in S_{n_k}u_k}1_A(r(\gamma)).\]
Now define a probability measure $\mu_k=(1/|S_{n_k}u_k|)\sum_{\gamma\in S_{n_k}u_k}\delta_{r(\gamma)}$ on $\GU$ and let $\nu$ be a cluster point of the sequence $\{\mu_k\}$ in $M(\GU)$ under weak*-topology. Then by passing to a subsequence if necessary one has $\nu(A)=\lim_{k\to \infty}\mu_k(A)$. Now it is left to verify that $\nu\in M(\CG)$. But this follows  by the same argument of \cite[Proposition 5.9]{M-W}.
\end{proof}

On the other hand, let  $\alpha: \Gamma\curvearrowright X$ be an amenable group $\Gamma$ acting on the Cantor set $X$.  Applying the method in the p.m.p setting to Cantor dynamical systems, the way to establish Ornstein-Weiss quasi-tiling, or almost finiteness in measure in \cite{D-G}, is to look for certain sub-castles contained in  $S_{\Gamma, X, n}$ defined in Example \ref{eg: canonical folner sequence} by induction. However, one cannot proceed in this canonical way using a general F{\o}lner sequence $\CS$ even for $\Z$-systems. For example,  it is impossible using $T_n$ in Example \ref{eg: ill-behaved density} to establish the  almost finiteness in measure for $X\rtimes\Z$ because all multisections induced by $\{(\gamma x, \gamma, x):\gamma\in \{0\}\cup s^n_{k}+F_n, x\in O^n_k\}$ ``collapse'' eventually. In another words, there are too many overlaps for these multisections contained in each $T_n$. We have seen this is also an obstruction in establishing effective connection between densities and invariant measures. These thus suggest that one may need to work with F{\o}lner sets in which multisections should be ``disjoint'' in some sense, and thus leads to the following definition. Given a family $\CA=\{A_i, i\in I\}$ of sets. We say $\CA$ is \emph{at most $n$-colorable} if there are at most $n$ disjoint subfamily $\CA_1,\dots, \CA_n$ of  $\CA$ such that $\CA=\bigcup_{k=1}^n\CA_k$.

\begin{defn}\label{defn: good folner set}
Let $S=\bigcup_{l=1}^m\bigsqcup_{i\in F_l}C^l_{i, i_l}$ be a normal F{\o}lner set in the sense of \ref{defn_normal folner set}.  Define the \textit{height} of $S$ to be $h(S)=\min\{|I_l|, 1\leq l\leq m\}$. Let $M\in \N_+$ and $\epsilon>0$.  We say $S$  is $(M, \epsilon)$\textit{-good}  if there is an injective function $f_l: \{0, \dots, h(S)-1\}\to I_l$ for any $1\leq l\leq m$ such that 
\begin{enumerate}
	\item $f_l(0)=i_l$ for each $1\leq l\leq m$
	\item $\inf_{\mu\in M(\CG)}\mu(\bigcup_{l=1}^mC^l_{f_l(k), f_l(k)})\geq 1-\epsilon$ for any $0\leq k\leq h(S)-1$.
	\item There is a $L\subset \{1,\dots, m\}$ with $\inf_{\mu\in M(\CG)}\mu(\bigsqcup_{l\in L}C^l_{i_l, i_l})>1-\epsilon$ such that $\{C^l_{f_l(k), f_l(k)}: l\in L\}$ is at most $M$-colorable for any $0\leq k\leq h(S)-1$.
\end{enumerate}
\end{defn}

\begin{prop}\label{prop:almost finite in measure admits good Folner sequence}
	Let $\CG$ be a  groupoid that is almost finite in measure and  $\{\epsilon_n: n\in \N\}$ a decreasing sequence with $\lim_{n\to \infty}\epsilon_n=0$. Then $\CG$ admits a F{\o}lner sequence $\CS=\{S_n: n\in \N\}$ such that all $S_n$ are $(1, \epsilon_n)$-good.
\end{prop}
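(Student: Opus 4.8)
The plan is to build, for each $n$, a single normal F{\o}lner set $S_n$ out of the elementary subgroupoid supplied by almost finiteness in measure, present it in the column form of Definition \ref{defn_normal folner set} so that $(1,\epsilon_n)$-goodness can be read off, and then assemble the $S_n$ into a F{\o}lner sequence as in Proposition \ref{prop: exist folner sequence}. Concretely, I would fix an increasing exhaustion $\{K_n\}$ of $\CG$ by compact open sets with $K_0=\GU$, choose $\delta_n\le\epsilon_n$ small (reserving room for the perturbation estimates of Lemma \ref{lem: shrink folner set}), and apply Definition \ref{defn: af in measure} to $(K_n,\delta_n)$ to obtain a compact open elementary subgroupoid $\CH_n=\bigcup\CC_n$ with compact open castle $\CC_n=\{C^l_{i,j}:i,j\in F_l,\ l\in I_n\}$, satisfying $|K_n\CH_n u\setminus \CH_n u|<\delta_n|\CH_n u|$ for all $u\in \CH_n^{(0)}$ and $\mu(\CH_n^{(0)})>1-\delta_n$ for every $\mu\in M(\CG)$. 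The set $S_n$ will consist of $\CH_n$ itself together with an auxiliary family of multisections over the clopen leftover $W_n=\GU\setminus \CH_n^{(0)}$.

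The goodness will come from re-indexing $\CH_n$ exactly as in Proposition \ref{prop: almost finite folner sequence}: I re-base each tower $\CC_l$ at every one of its levels, producing the overlapping multisections indexed by $J_{\mathrm{core}}=\{(l,i):l\in I_n,\ i\in F_l\}$, so that the core part of $S_n$ is literally $\CH_n$, its base levels $\{C^l_{i,i}\}$ partition $\CH_n^{(0)}$, and the fiber of the multisection based at $u$'s level is exactly $\CH_n u$, hence $(K_n,\delta_n)$-F{\o}lner. Using the cyclic relabelling $f_{(l,i)}(k)=(i+k)\bmod |F_l|$ (so $f_{(l,i)}(0)$ is the chosen base), the level-$k$ slice $\bigcup_{(l,i)}C^l_{f_{(l,i)}(k),f_{(l,i)}(k)}$ equals $\CH_n^{(0)}$ for every $k$, which yields condition (2) of Definition \ref{defn: good folner set} with value at least $\mu(\CH_n^{(0)})>1-\epsilon_n$. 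Taking $L=J_{\mathrm{core}}$, the base-level slice is again $\CH_n^{(0)}$ of measure $>1-\epsilon_n$, and for each fixed $k$ the levels are pairwise disjoint by the castle disjointness together with condition (2) of Definition \ref{5.6}, i.e. they are $1$-colorable; this gives condition (3) with $M=1$.

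The remaining, and I expect hardest, point is to equip the measure-small leftover $W_n$ with genuinely F{\o}lner fibers, since almost finiteness in measure only controls fibers over $\CH_n^{(0)}$ and a trivial singleton over a point of $W_n$ is never F{\o}lner. Here I would first note that almost finiteness in measure forces $\CG$ to be fiberwise amenable, upgrade this to ubiquitous fiberwise amenability, and then, using principality so that $\CG^u_u=\{u\}$ for every $u\in W_n$, apply Lemma \ref{lem: folner slice} at each point of $W_n$ with a compactness argument exactly as in Proposition \ref{prop:normal folner set} to cover the clopen set $W_n$ by finitely many compact open multisections with $(K_n,\epsilon_n)$-F{\o}lner fibers, chosen at least as tall as the core towers so that $h(S_n)$ is not collapsed. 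These auxiliary multisections are deliberately excluded from $L$, so they are irrelevant to condition (3) and only enlarge the unions in condition (2); together with the core they make the base levels of $S_n$ partition all of $\GU$ and every fiber $S_n u$ $(K_n,\epsilon_n)$-F{\o}lner, so $S_n$ satisfies Definition \ref{defn_normal folner set} and is $(1,\epsilon_n)$-good.

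Finally, with $\delta_n\le\epsilon_n$ and the right-shift and perturbation bookkeeping of Lemma \ref{lem: shrink folner set} absorbed into the choice of constants, the family $\{S_n\}$ meets the limit condition of Definition \ref{defn: folner sequence for groupoids} because $K_n$ exhausts $\CG$, exactly as in the proof of Proposition \ref{prop: exist folner sequence}; hence $\CS=\{S_n\}$ is the desired F{\o}lner sequence with each $S_n$ being $(1,\epsilon_n)$-good. The crux throughout is the leftover: promoting the fiberwise-F{\o}lner property from the full-measure part $\CH_n^{(0)}$ to all of $\GU$ while keeping the auxiliary towers tall enough and outside $L$.
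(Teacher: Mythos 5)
Your proposal follows essentially the same route as the paper's own proof: decompose the elementary subgroupoid $\CH_n$ given by almost finiteness in measure into a castle, re-base each tower at every level with the cyclic relabelling $f_{(l,i)}(k)=(i+k)\bmod|F_l|$ so that each level-$k$ slice of the core equals $\CH_n^{(0)}$ (giving $(1,\epsilon_n)$-goodness), patch the measure-small leftover by the method of Proposition \ref{prop:normal folner set}, and assemble the sequence via an exhaustion as in Proposition \ref{prop: exist folner sequence}. If anything, you are more explicit than the paper about the hypotheses (principality and ubiquitous fiberwise amenability) implicitly invoked when covering the leftover $\GU\setminus\CH_n^{(0)}$ via Lemma \ref{lem: folner slice}, which the paper compresses into the phrase ``using the same method in Proposition \ref{prop:normal folner set}.''
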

\begin{proof}
It suffices to show that for any compact set $K\subset \CG$ and $\epsilon>0$, there is a $(1, \epsilon)$-good normal $(K, \epsilon)$-F{\o}lner set. Indeed, let such $K$ and $\epsilon>0$ be given. Since $\CG$ is almost finite in measure, there is a compact open elementary groupoid $\CH$ such that
\begin{enumerate}
	\item $|K\CH u\setminus \CH u|<\epsilon|\CH u|$ for any $u\in \HU$ and
	\item $\mu(\HU)>1-\epsilon$ for any $\mu\in M(\CG)$.
\end{enumerate} 
For $\CH$, there is a compact open castle $\CC=\{C^l_{i, j}: i, j\in F_l, l\in I\}$ such that $\CH=\bigcup\CC$. Without loss of generality, we write $F_l=\{1,\dots, k_l\}$ for some $k_l\in \N$.   Define a new index set $J=\{p=(l, i): i\in F_l, l\in I\}$ and for each $p\in J$ define $E_p=F_l$ and a particular index $d_p\in E_p$ by $d_p=i$. Then define the bisection $A^p_{j, d_p}=C^l_{j, i}$ and $A^p_{j, j'}=C^l_{j, j'}$ for $j, j'\in E_p$. Observe that $\CH=\bigsqcup_{p\in J}\bigsqcup_{j\in E_p}A^p_{j, d_p}$. Then using the same method in Proposition \ref{prop:normal folner set}, there are multisections $\CD^q=\{D^q_{m,n}: m,n\in T_q\}$ with particular index $n_q\in T_q$, for each $q\in Q$ where $Q$ is a finite index set, satisfying
\begin{enumerate}
\item $\GU\setminus \HU=\bigsqcup_{q\in Q}D^q_{n_q, n_q}$ and
\item if we write $D=\bigcup_{q\in Q}\bigsqcup_{m\in T_q}D^q_{m, n_q}$ then $\CD u$ is $(K, \epsilon)$-F{\o}lner for any $u\in \GU\setminus \HU$.
\end{enumerate}
Then we define $S=\CH\cup D$, which is a normal $(K, \epsilon)$-F{\o}lner set. Now we verify that $S$ is $(1, \epsilon)$-good. Let $h(S)$ be the height of $S$. Then we first work on $\CH$ part. For each $p=(l, i)\in J$ with $E_p=F_l=\{1, \dots, k_l\}$ define the map $f_p:\{0, \dots h(S)-1\} \to E_p$ by $f_p(j)=i+j \mod k_l$. Then for each $q\in Q$ choose an arbitrary subset $T'_q\subset T_q$ with $|T'_p|=h(S)$ and a bijective map $g_q: \{0, \dots, h(S)-1\} \to T'_q$ with $g_q(0)=n_q$. Observe that \[\bigsqcup_{p\in J}A^p_{f_p(k), f_p(k)}=\bigsqcup_{l\in I}\bigsqcup_{i\in F_l}C^l_{i, i}=\HU\]  for each $0\leq k\leq h(S)$, which entails 
\[\inf_{\mu\in M(\CG)}\mu(\bigsqcup_{p\in J}A^p_{f_p(k), f_p(k)}\cup \bigcup_{p\in J}D^q_{g_q(k), g_q(k)})\geq \inf_{\mu\in M(\CG)}(\HU)>1-\epsilon.\]
Then consider $\CH=\bigsqcup_{p\in J}\bigsqcup_{j\in E_p}A^p_{j, d_p}$, which satisfying \[\inf_{\mu\in M(\CG)} \mu(\bigsqcup_{p\in J}A^p_{d_p, d_p})=\inf_{\mu\in M(\CG)}\mu(\HU)>1-\epsilon.\]
In addition, for each $0\leq k\leq h(S)-1$, the family $\{A^p_{f_p(k), f_p(k)}: p\in J\}$ is exactly the level sets $\{C^l_{i, i}: i\in F_l, l\in I\}$, which is disjoint ant thus has only one color.
\end{proof}

We also need the following concept.

\begin{defn}\label{defn: control length}
	Let $S=\bigcup_{l=1}^m\bigsqcup_{i\in F_l}C^l_{i, i_l}$ be a normal F{\o}lner set in the sense of \ref{defn_normal folner set}. Let $N\in \N_+$ We say $S$ has $N$-\textit{controlled} height if $|I_l|\leq N\cdot h(S)$ for any $1\leq l\leq m$.
\end{defn}

\begin{rmk}\label{rmk: 1-controlled height}
	 We remark that the normal F{\o}lner sets constructed in Proposition \ref{prop: minimal groupoid normal folner} has $1$-controlled height  and thus Proposition \ref{prop: exist folner sequence} implies that there  exists a F{\o}lner sequence consists of normal F{\o}lner sets with the $1$-controlled height for minimal groupoids in which there is a $u\in \GU$ such that $\CG^u_u=\{u\}$. This thus applies to a second countable minimal almost finite groupoid $\CG$, which is always topological principal (see \cite[Remark 6.6]{Matui}). On the other hand, provided a decreasing sequence $\{\epsilon_n: n\in \N\}$ with $\lim_{n\to \infty}\epsilon_n=0$, the same argument of Proposition \ref{prop:almost finite in measure admits good Folner sequence} shows that a minimal almost finite groupoid $\CG$ also admits a F{\o}lner sequence $\CS=\{S_n: n\in \N\}$ in which each $S_n$ is $(1, \epsilon_n)$-good by using the normal F{\o}lner sets in Proposition \ref{prop: almost finite folner sequence}. It is still unknown to the author whether a minimal almost finite groupoids admits a F{\o}lner sequence satisfying both of these two properties. 
\end{rmk}

\begin{prop}\label{prop: density bigger than measure}
Let $\CG$ be a groupoid. Let $M, N\in \N$ and a decreasing sequence $\{\epsilon_n: n\in \N\}$ such that $\lim_{n\to \infty}\epsilon_n=0$. Suppose $\CG$ admits a  F{\o}lner sequence $\CS=\{S_n: n\in \N\}$  in which all $S_n$ are $(M, \epsilon_n)$-good and have $N$-controlled height. Then  $\bar{D}^-_\CS(A)\geq \frac{1}{N}\sup_{\mu\in M(\CG)}\mu(A)$ for any clopen set $A$ in $\GU$.
\end{prop}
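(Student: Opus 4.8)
The plan is to fix an invariant measure $\mu\in M(\CG)$ and a stage $n$, bound the $\mu$-average of the relevant counting function from below, and then pass to $\liminf_n$ and $\sup_\mu$. Write $F_n(u)=\frac{1}{|S_nu|}\sum_{\gamma\in S_nu}1_A(r(\gamma))$, so that $\bar{D}^-_\CS(A)=\liminf_n\sup_{u\in\GU}F_n(u)$ by Definition \ref{defn: densities for groupoid}. Because $A$ is clopen and every ladder is a compact open bisection on which $r$ is a homeomorphism, each set $\{u:r(C^l_{i,i_l}u)\in A\}$ is clopen, so $F_n$ is locally constant, hence Borel and bounded; consequently $\sup_{u}F_n(u)\geq\int_{\GU}F_n\,d\mu$ for every $\mu\in M(\GU)$. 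It therefore suffices to show $\int_{\GU}F_n\,d\mu\geq\frac1N(\mu(A)-\epsilon_n)$ for $\mu\in M(\CG)$.

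For the integral I would use the normal F{\o}lner structure $S_n=\bigcup_{l=1}^m\bigsqcup_{i\in F_l}C^l_{i,i_l}$ and the partition $\GU=\bigsqcup_{l=1}^m C^l_{i_l,i_l}$ of Definition \ref{defn_normal folner set}. For $u$ in the base level $C^l_{i_l,i_l}$ one has $S_nu=\{C^l_{i,i_l}u:i\in F_l\}$, a set of exactly $|F_l|$ elements, so $F_n(u)=\frac1{|F_l|}\sum_{i\in F_l}1_A(r(C^l_{i,i_l}u))$. Each ladder $C^l_{i,i_l}$ induces the partial homeomorphism $C^l_{i_l,i_l}\to C^l_{i,i}$, $u\mapsto r(C^l_{i,i_l}u)$, which is measure preserving by $\CG$-invariance of $\mu$; integrating over each base level and summing over $l$ gives
\[
\int_{\GU}F_n\,d\mu=\sum_{l=1}^m\frac1{|F_l|}\sum_{i\in F_l}\mu(A\cap C^l_{i,i}).
\]

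To extract the constant $1/N$, let $f_l:\{0,\dots,h(S_n)-1\}\to F_l$ be the injections provided by $(M,\epsilon_n)$-goodness (Definition \ref{defn: good folner set}). Discarding the nonnegative summands with $i$ outside the image of $f_l$ and invoking the bound $|F_l|\leq N\,h(S_n)$ of Definition \ref{defn: control length}, I obtain
\[
\sum_{l=1}^m\frac1{|F_l|}\sum_{i\in F_l}\mu(A\cap C^l_{i,i})\geq\frac1{N\,h(S_n)}\sum_{k=0}^{h(S_n)-1}\sum_{l=1}^m\mu(A\cap C^l_{f_l(k),f_l(k)}).
\]
For each fixed $k$, condition (2) of Definition \ref{defn: good folner set} gives $\mu\big(\bigcup_l C^l_{f_l(k),f_l(k)}\big)\geq 1-\epsilon_n$, so subadditivity yields $\sum_l\mu(A\cap C^l_{f_l(k),f_l(k)})\geq\mu\big(A\cap\bigcup_l C^l_{f_l(k),f_l(k)}\big)\geq\mu(A)-\epsilon_n$. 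Summing over the $h(S_n)$ values of $k$ cancels the prefactor and gives $\int_{\GU}F_n\,d\mu\geq\frac1N(\mu(A)-\epsilon_n)$. Taking $\liminf_n$ (using $\epsilon_n\to0$) and then $\sup_{\mu\in M(\CG)}$ finishes the argument.

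I expect the measure computation in the second step to be the main obstacle: one must keep track of which ladder implements which partial homeomorphism and confirm that $\CG$-invariance makes it $\mu$-preserving, together with the (routine) check that $F_n$ is integrable, for which clopenness of $A$ is exactly what is needed. It is worth noting that the colorability clause (condition (3) of Definition \ref{defn: good folner set}) and the precise value of $M$ are not used here; only conditions (1)--(2) together with the $N$-controlled height enter the estimate.
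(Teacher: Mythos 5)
Your proof is correct and follows essentially the same route as the paper's: both arguments reduce the claim to the estimate $\sup_{u\in\GU}\frac{1}{|S_nu|}\sum_{\gamma\in S_nu}1_A(r(\gamma))\geq\frac{1}{N}(\mu(A)-\epsilon_n)$ by averaging over the base levels $C^l_{i_l,i_l}$, transporting $\mu$ along the ladders via invariance, applying condition (2) of $(M,\epsilon_n)$-goodness to each slice $k$, and invoking the $N$-controlled height, and your observation that condition (3) and the value of $M$ are never used matches the paper's proof as well. The only cosmetic difference is that you realize the averaging as an integral of the locally constant function $F_n$ against $\mu$ (so that $\sup\geq\int$ does the work), whereas the paper first chops the castle so that each level is either contained in or disjoint from $A$ and then uses the discrete inequality ``weighted average $\leq$ maximum'' over the base levels --- the two devices are interchangeable here.
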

\begin{proof}
Let $\epsilon>0$ and $A\subset \GU$ be a  clopen set. Let $n$ big enough such that $\epsilon_n<\epsilon$ and, by definition of $\bar{D}^-_\CS(A)$, there is a $(M, \epsilon_n)$-good normal F{\o}lner set $S_n=\bigcup_{l=1}^m\bigsqcup_{i\in F_l}C^l_{i, i_l}$ with $N$-controlled length such that $\bar{D}^-_\CS(A)+\epsilon>\bar{D}^+_{S_n}(A)$ where $\bar{D}^+_{S_n}(A)=\sup_{u\in \GU}(1/|Su|)\sum_{\gamma\in Su}1_A(r(\gamma))$. By the standard chopping technique,  without loss of generality, we may assume either $C^l_{i, i}\subset A$ or $C^l_{i, i}\cap A=\emptyset$ for any $i\in F_l$ and $1\leq l\leq m$. Note that after this refinement, the F{\o}lner set $S_n$ is still $(M, \epsilon_n)$-good and has $N$-controlled height. First, by definition, one has
\[\bar{D}^+_{S_n}(A)= \max_{l=1}^m\sup_{u\in C^l_{i_l, i_l}}\frac{1}{|S_nu|}\sum_{\gamma\in S_lu}1_A(r(\gamma))=\max_{l=1}^m\frac{|\{C^l_{i, i}\subset A: i\in I_l\}|}{|I_l|}.\]
Then the $(M, \epsilon_n)$-goodness of $S_n$ implies that for each $1\leq l\leq m$ there is an injection $f_l: J_l=\{0, \dots, h(S_n)-1\}\to I_l$ satisfying (1) and (2) in Definition \ref{defn: good folner set}. 

For each $0\leq k\leq h(S_n)-1$ define $A_k=A\cap (\bigcup_{l=1}^m C^l_{f_l(k), f_l(k)})$. This implies $\mu(A)-\epsilon\leq \mu(A_k)$ for any $\mu\in M(\CG)$ because $\inf_{\mu\in M(\CG)}\mu(\bigcup_{l=1}^mC^l_{f_l(k), f_l(k)})>1-\epsilon_n>1-\epsilon$. On the other hand, let $\mu\in M(\CG)$. For each $k$,  our construction implies that 
\[\mu(A_k)\leq\sum\{\mu(C^l_{f_l(k), f_l(k)}): C^l_{f_l(k), f_l(k)}\subset A, 0\leq l\leq m\},\]
which entails
\begin{align*}
h(S_n)(\mu(A)-\epsilon)&\leq \sum_{k=0}^{h(S_n)-1}\sum\{\mu(C^l_{f_l(k), f_l(k)}): C^l_{f_l(k), f_l(k)}\subset A, 0\leq l\leq m\}\\
&=\sum_{l=1}^m|\{C^l_{i, i}\subset A: i\in f_l(J_l)\}|\mu(C^l_{i_l, i_l}).
\end{align*}
Then using $|I_l|\leq N\cdot h(S_n)$ for each $1\leq l\leq m$ one actually has 
\begin{align*}
	\mu(A)-\epsilon&\leq \max_{l=1}^m\frac{|\{C^l_{i, i}\subset A: i\in I_l\}|}{h(S_n)}\cdot\sum_{l=1}^m\mu(C^l_{i_l, i_l})
	=\max_{l=1}^m\frac{|\{C^l_{i, i}\subset A: i\in I_l\}|}{h(S_n)}\\
	&\leq \max_{l=1}^m\frac{N\cdot|\{C^l_{i, i}\subset A: i\in I_l\}|}{|I_l|}\leq N\cdot \bar{D}^+_{S_n}(A)\leq N\cdot (\bar{D}^-_\CS(A)+\epsilon)
\end{align*}
Now, one actually has $\sup_{\mu\in M(\CG)}\mu(A)\leq N\cdot (\bar{D}^-_\CS(A)+\epsilon)+\epsilon$. Now let $\epsilon\to 0$, we have $\sup_{\mu\in M(\CG)}\mu(A)\leq N\cdot \bar{D}^-_\CS(A)$ as desired.
\end{proof}

Combining Lemma \ref{lem: density less measure} and Proposition \ref{prop: density bigger than measure}, we have the following result.
\begin{prop}\label{prop: measure equal density}
Let $\CG$ be a  groupoid. Let $M, N\in \N$ and a decreasing sequence $\{\epsilon_n: n\in \N\}$ such that $\lim_{n\to \infty}\epsilon_n=0$. Suppose there is a  F{\o}lner sequence $\CS=\{S_n: n\in \N\}$  in which all $S_n$ are $(M, \epsilon_n)$-good and have $N$-controlled height. Then for any clopen set $A\subset \GU$, one has
\[\frac{1}{N}\sup_{\mu\in M(\CG)}\mu(A)\leq \bar{D}^-(A)\leq \bar{D}^+(A)\leq \sup_{\mu\in M(\CG)}\mu(A).\] In particular, if $N=1$, one has 
$\sup_{\mu\in M(\CG)}\mu(A)=\bar{D}^-(A)= \bar{D}^+(A).$
\end{prop}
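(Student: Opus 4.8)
The plan is to assemble the displayed chain of inequalities directly from the two preceding results, treating the middle inequality as a formal consequence of the definitions. Since $\GU$ is compact and zero-dimensional, any clopen set $A \subset \GU$ is automatically compact open, so both Lemma \ref{lem: density less measure} and Proposition \ref{prop: density bigger than measure} apply to $A$ without further ado.

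First I would observe that the rightmost inequality $\bar{D}^+_\CS(A) \leq \sup_{\mu \in M(\CG)} \mu(A)$ is exactly Lemma \ref{lem: density less measure}: that lemma produces a single invariant measure $\nu \in M(\CG)$ with $\bar{D}^+_\CS(A) = \nu(A)$, which is in turn bounded above by the supremum over all of $M(\CG)$. Next, the leftmost inequality $\frac{1}{N}\sup_{\mu \in M(\CG)} \mu(A) \leq \bar{D}^-_\CS(A)$ is precisely the conclusion of Proposition \ref{prop: density bigger than measure}; this is the one step that actually consumes the hypotheses that each $S_n$ is $(M,\epsilon_n)$-good and has $N$-controlled height.

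For the middle inequality $\bar{D}^-_\CS(A) \leq \bar{D}^+_\CS(A)$, I would simply unwind Definition \ref{defn: densities for groupoid}. Setting $a_n = \sup_{u \in \GU} \frac{1}{|S_n u|} \sum_{\gamma \in S_n u} 1_A(r(\gamma))$, these two densities are respectively $\liminf_n a_n$ and $\limsup_n a_n$, so the inequality reduces to the tautology $\liminf_n a_n \leq \limsup_n a_n$. This closes the chain.

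Finally, for the \emph{in particular} clause, setting $N = 1$ forces the two outer terms $\frac{1}{N}\sup_{\mu} \mu(A)$ and $\sup_{\mu} \mu(A)$ to coincide, so the four-term chain collapses to the equalities $\sup_{\mu \in M(\CG)} \mu(A) = \bar{D}^-_\CS(A) = \bar{D}^+_\CS(A)$. I expect no genuine obstacle here, since both analytic inputs were already established upstream; the only point worth flagging is that all four densities depend on the fixed Følner sequence $\CS$, so these (in)equalities are assertions about $\bar{D}^{\pm}_\CS$ for this particular $\CS$ rather than about any sequence-independent invariant.
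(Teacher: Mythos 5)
Your proposal is correct and matches the paper exactly: the paper gives no separate argument, simply stating that the proposition follows by combining Lemma \ref{lem: density less measure} (the upper bound) with Proposition \ref{prop: density bigger than measure} (the lower bound), with the middle inequality being the tautological $\liminf \leq \limsup$. Your observation that clopen sets in the compact unit space are compact open, and your caveat about dependence on the fixed F{\o}lner sequence $\CS$, are both accurate and consistent with the paper's Remark \ref{rmk: right def of density}.
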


\begin{rmk}\label{rmk: right def of density}
Let $M\in \N$ and a decreasing sequence $\{\epsilon_n: n\in \N\}$ such that $\lim_{n\to \infty}\epsilon_n=0$. Let $\CG$ be a $\sigma$-compact groupoid, equipped with a F{\o}lner sequence $\CS=\{S_n: n\in \N\}$ in which all $S_n$ are $(M, \epsilon_n)$-good and have $1$-controlled height. Then \ref{prop: measure equal density} allows us to define upper density and lower density with respect to $\CS$ for any clopen set $A$ by
\[\bar{D}_\CS(A)=\lim_{n\to \infty}\sup_{u\in \GU}\frac{1}{|S_nu|}\sum_{\gamma\in S_nu}1_A(r(\gamma))\]
and
\[\barbelow{D}_\CS(A)=\lim_{n\to \infty}\inf_{u\in \GU}\frac{1}{|S_nu|}\sum_{\gamma\in S_nu}1_A(r(\gamma)).\]
In addition the value of two densities for a given clopen set $A$ does not depend on the choice of F{\o}lner sequences $\{S_n:n\in \N\}$ in which  all $S_n$ are  $(M, \epsilon_n)$-good for some $M\in \N$ and a decreasing sequence $\epsilon_n\to 0$ and have $1$-controlled height.
\end{rmk}

\section{Tilings for ample \'{e}tale groupoids}
In this section, we establish our first main result, i.e., Theorem A. We mainly follow the strategy introduced in \cite{C-J-K-M-S-T} and \cite{D-G} to find proper subcastles in ``good'' F{\o}lner sequences by induction. We begin with the following definition.  

\begin{defn}\label{defn: folner set unit space}
Let $K$ be a compact set in $\CG$ and $\delta>0$. We say a set $A$ in $\GU$ is $(K, \delta)^*$-invariant if for any compact set $F$ in $\CG$ and any $\epsilon>0$ there is a normal $(F, \epsilon)$-F{\o}lner set $S$ such that 
\[\sup_{u\in \GU}\frac{|r(KA\setminus A)\cap r(Su)|}{|A\cap r(Su)|}<\delta.\]
\end{defn}

\begin{prop}\label{prop: folner set in unit space}
Let $\CG$ be a  groupoid that admits a F{\o}lner sequence $\CS=\{S_n: n\in \N\}$  and $K, W$ compact sets in $\CG$. Let $\epsilon, \delta,\delta_0>0$ with $\delta>\delta_0$ and $C\subset s(W)$. Now for any $c\in C$, Let $F_c\subset Wc$ be a $(K, \delta_0(1-\epsilon))$-F{\o}lner set such that $r|_{F_c}$ is injective. Suppose the  collection $\{r(F_c): c\in C\}$ is $\epsilon$-disjoint and the set $A=\bigcup_{c\in C}r(F_c)$ satisfies $\barbelow{D}^-_\CS(A)>0$.   Then $A$ is  $(K, \delta)^*$-invariant.
\end{prop}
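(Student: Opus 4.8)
The plan is to unwind Definition~\ref{defn: folner set unit space} and to produce the required F{\o}lner set directly from the given sequence $\CS$. Fix a compact set $F\subset\CG$ and a tolerance $\beta>0$ (the quantifiers in Definition~\ref{defn: folner set unit space}); since $\CS=\{S_n:n\in\N\}$ is a F{\o}lner sequence, for all large $n$ the set $S_n$ is a normal $(F,\beta)$-F{\o}lner set, so it suffices to exhibit a single $n$ with
\[
\sup_{u\in\GU}\frac{|B\cap r(S_nu)|}{|A\cap r(S_nu)|}<\delta ,
\]
where $B:=r(KA)\setminus A$ is the unit-space $K$-boundary of $A$ occurring in the numerator of Definition~\ref{defn: folner set unit space} (recall $KA=\{\kappa\in K:s(\kappa)\in A\}$ since $A\subset\GU$). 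Everything thus reduces to a fiberwise estimate that is \emph{uniform in $u$}.

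First I would record the containment $B\subseteq\bigcup_{c\in C}r(\partial_K^+F_c)$. If $v=r(\kappa)\in B$ with $\kappa\in K$ and $w:=s(\kappa)\in A$, then $w=r(\eta)$ for some $c\in C$ and $\eta\in F_c$; the composite $\kappa\eta$ lies in $KF_c$ and has $r(\kappa\eta)=v\notin A\supseteq r(F_c)$, whence $\kappa\eta\in KF_c\setminus F_c=\partial_K^+F_c$ and $v\in r(\partial_K^+F_c)$. Since $r|_{F_c}$ is injective we have $|r(\partial_K^+F_c)|\le|\partial_K^+F_c|\le|\partial_KF_c|\le\delta_0(1-\eps)|F_c|=\delta_0(1-\eps)|r(F_c)|$, and the $\eps$-disjointness of $\{r(F_c):c\in C\}$ gives $\sum_{c}|r(F_c)|\le(1-\eps)^{-1}|A|$; together these encode the \emph{global} statement that the total boundary is at most $\delta_0$ times the total interior.

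The heart of the proof is to localize this global bound to each fiber $V:=r(S_nu)$, and here the hypothesis $F_c\subset Wc$ is essential: every point of the shape $r(F_c)\cup r(\partial_K^+F_c)$ is the range of a groupoid element lying in $\bar{B}_{\rho}(c,R_0)$ with $R_0:=\overline{\ell}(KW)$, so the shapes are uniformly \emph{local}. Call a shape \emph{full} (for $V$) if it is contained in $V$ and \emph{straddling} otherwise. The full shapes contribute at most $\delta_0(1-\eps)\sum_{\mathrm{full}}|r(F_c)|$ to $B\cap V$ by the F{\o}lner bound above, while their pairwise disjoint $\eps$-cores lie in $V$ and contribute at least $(1-\eps)\sum_{\mathrm{full}}|r(F_c)|$ to $A\cap V$; hence the full part of the ratio is already $\le\delta_0$. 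For a straddling shape there is a point $p\in V$ and a point $q\notin V$ of the same shape; writing $p=r(\gamma)$ with the (injective) $\gamma\in S_nu$, the element carrying $q$ yields an element of $\CG_u$ at $\rho$-distance $\le 2R_0$ from $\gamma$ that lies outside $S_nu$, so $\gamma$ belongs to the inner $2R_0$-boundary of $S_nu$. By the F{\o}lner property of $S_n$ applied to the compact set $\ell^{-1}([0,2R_0])$, this inner boundary has cardinality $o(|S_nu|)$ \emph{uniformly in $u$}; since $r|_{S_nu}$ is injective, straddling shapes therefore contribute only $o(|S_nu|)$ to $B\cap V$.

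Combining the two contributions gives, uniformly in $u$,
\[
\frac{|B\cap r(S_nu)|}{|A\cap r(S_nu)|}\le\delta_0+\frac{o(|S_nu|)}{|A\cap r(S_nu)|}\le\delta_0+o(1),
\]
where the hypothesis $\barbelow{D}^-_{\CS}(A)>0$ lets me bound $|A\cap r(S_nu)|\ge c\,|S_nu|$ for some $c>0$ and all large $n$, uniformly in $u$. As $\delta_0<\delta$, the right-hand side is $<\delta$ for all $n$ large enough, which furnishes the normal $(F,\beta)$-F{\o}lner set required by Definition~\ref{defn: folner set unit space}. I expect the straddling estimate to be the main obstacle: one must transport the F{\o}lner boundary bound from the source fibers $\CG_u$ to counts of units in $\GU$ in a way that is uniform over all $u$, and it is exactly here that the boundedness of the shapes ($F_c\subset Wc$) and the positivity of $\barbelow{D}^-_{\CS}(A)$ are used.
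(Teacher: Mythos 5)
Your proposal is correct and follows essentially the same route as the paper: split the shapes into those fully contained in $r(S_nu)$ (controlled by the $(K,\delta_0(1-\eps))$-F{\o}lner property of each $F_c$ together with the $\eps$-disjointness) and the straddling ones (controlled by the F{\o}lner property of $S_n$ and the lower bound $|A\cap r(S_nu)|\gtrsim\barbelow{D}^-_\CS(A)|S_nu|$). The only cosmetic difference is that you phrase the straddling estimate via the inner $2R_0$-boundary of $S_nu$ with $R_0=\overline{\ell}(KW)$, whereas the paper works with the compact set $T=WW^{-1}K^{-1}\cup\GU$ and the set $B=\{b\in r(S_nu):r(Tb)\not\subset r(S_nu)\}\subset r\bigl(T^{-1}(TS_nu\setminus S_nu)\bigr)$ — these are the same bound in different clothing.
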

\begin{proof}
Set $A=\bigcup_{c\in C}r(F_c)$ and $T=WW^{-1}K^{-1}\cup \GU$. Let $L\subset \CG$ be a compact set and $\epsilon_0>0$.  Then there is a normal F{\o}lner set $V=S_n\in \CS$ for a large enough $n$ such that 
\begin{enumerate}
\item $V$ is a normal $(L, \epsilon)$-F{\o}lner set.
\item $Vu$ is additionally $(T, \frac{\barbelow{D}^-_\CS(A)}{2\|T^{-1}\|}(\delta-\delta_0))$-F{\o}lner for any $u\in \GU$.
\item $\inf_{u\in \GU}\frac{|A\cap r(Vu)|}{|Vu|}>\frac{\barbelow{D}^-_\CS(A)}{2}$.
\end{enumerate}
Let $u\in \GU$ and define $B=\{b\in r(Vu): r(Tb)\not\subset r(Vu)\}$. Note that for any $b\in B$ there is a $\gamma\in T$ such that $r(\gamma b)\notin r(Vu)$ and thus one has 
\[r(\gamma b)\in r(TVu)\setminus r(Vu)\subset r(TVu\setminus Vu),\]
which yields $b\in r(T^{-1} r(TVu\setminus Vu))$. Thus, one has $B\subset r(T^{-1}(TVu\setminus Vu))$. Then using Remark \ref{5.2} and properties (1), (2) for $V$ above, one has 
\begin{align*}
|B|&\leq |T^{-1}(TVu\setminus Vu)|\leq \|T^{-1}\|\cdot|TVu\setminus Vu|\\
&= \|T^{-1}\|\cdot \frac{|TVu\setminus Vu|}{|Vu|}\cdot \frac{|r(Vu)|}{|A\cap r(Vu)|}\cdot|A\cap r(Vu)|\\
&\leq  \|T^{-1}\|\cdot\frac{\barbelow{D}^-_\CS(A)}{2\|T^{-1}\|}(\delta-\delta_0)\cdot \frac{2}{\barbelow{D}^-_\CS(A)}|A\cap r(Vu)|=(\delta-\delta_0)|A\cap r(Vu)|.
\end{align*}
Now, define $C'=\{c\in C: r(F_c)\subset r(Vu)\}$. Then the $\epsilon$-disjointness of $\{r(F_c)\}$, the fact that $r|_{F_c}$ are all injective, and $\bigcup_{c\in C'}r(F_c)\subset A\cap r(Vu)$ imply that \[(1-\epsilon)\sum_{c\in C'}|F_c|=(1-\epsilon)\sum_{c\in C'}|r(F_c)|\leq |A\cap r(Vu)|.\] 
Now we claim if $c\in C\setminus C'$ then $r(KF_c)\cap r(Vu)\subset B$. Indeed, if $r(KF_c)\cap r(Vu)\neq \emptyset$, let $b\in r(KF_c)\cap r(Vu)$ where $c\in C\setminus C'$. One can write $b=r(\gamma x)$ where $x\in F_c$ and $\gamma\in K$. Since $c\notin C'$, there is a $y\in F_c$ such that $r(y)\notin r(Vu)$. Then using $s(y)=s(x)=c$, one actually has $r(y)=r(yx^{-1}\gamma^{-1}b)\in r(Tb)$. This implies that $r(Tb)\not\subset r(Vu)$. Therefore one has $r(KF_c)\cap r(Vu)\subset B$ by the definition of $B$.

Now, let $v\in (r(KA)\setminus A)\cap r(Vu)$. Write $v=r(\gamma x)\notin A$ such that $\gamma\in K$ and $x\in F_c$  for some $c\in C$. If $c\in C'$ then $v\in r(KF_c)\setminus r(F_c)$ because $v\notin A$. On the other hand, if $c\in C\setminus C'$ then $v\in r(KF_c)\cap r(Vu)\subset B$. This implies 
\[(r(KA)\setminus A)\cap r(Vu)\subset B\cup \bigcup_{c\in C'}(r(KF_c)\setminus r(F_c))\]
in which the cardinality of the latter set satisfies
\begin{align*}
|\bigcup_{c\in C'}(r(KF_c)\setminus r(F_c))|&\leq  \sum_{c\in C'}|r(KF_c)\setminus r(F_c)|\leq \sum_{c\in C'}|r(KF_c\setminus F_c)|\\
&\leq \sum_{c\in C'}\delta_0(1-\epsilon)|F_c|\leq \delta_0|A\cap r(Vu)|.
\end{align*}
Therefore, one has
 \begin{align*}
 |(r(KA)\setminus A)\cap r(Vu)|&\leq |B|+|\bigcup_{c\in C'}(r(KF_c)\setminus r(F_c))|\\
 &\leq (\delta-\delta_0)|A\cap r(Vu)|+\delta_0|A\cap r(Vu)|\leq\delta|A\cap r(Vu)|
 \end{align*}
 Now since $u$ is arbitrary, one has $A$ is $(K, \delta)^*$-invariant.
\end{proof}

\begin{prop}\label{prop: estimate of density}
	Let $\CG$ be a groupoid that admits a F{\o}lner sequence $\CS=\{S_n: n\in\N\}$ in which all $S_n$ are $(M, \epsilon_n)$-good and have $1$-controlled height for some $M\in \N$ and a decreasing sequence $\{\epsilon_n\}$ satisfying $\lim_{n\to \infty}\epsilon_n=0$. Let $0<\epsilon, \delta, \eta<1$ satisfying  $\epsilon(1+\delta)<1$.  Let $M\in \N_+$ and $T=\bigcup_{l=1}^m\bigsqcup_{i\in F_l}C^l_{i, i_l}$ be a $(M, \eta)$-good $1$-controlled normal F{\o}lner set. Suppose $A\subset \GU$ is $(T^{-1}, \delta)^*$-invariant and $B\supset A$ is a clopen set satisfying $|B\cap r(Tu)|\geq \epsilon|Tu|$ for any $u\in \GU$. Then $\barbelow{D}_\CS(B)\geq (1-(\epsilon/M))(1+\delta))\barbelow{D}_\CS(A)+(1-\eta)\epsilon/M$.
\end{prop}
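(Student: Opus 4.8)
The plan is to fix a large $n$ and a unit $u$, work inside the single fibre $r(S_nu)$, bound $|B\cap r(S_nu)|$ from below, and then divide by $|S_nu|$, take $\inf_u$, and let $n\to\infty$. Since $\CS$ has $1$-controlled height, every column of $T$ has the common size $q:=h(T)$ (indeed $|F_l|\le h(T)=\min_l|F_l|$ forces $|F_l|=q$, so each $f_l$ is a bijection). Hence each $v\in\GU$ lies in a unique base level $C^{l}_{i_{l},i_{l}}$ and determines a single tile $\tau(v):=r(Tv)$ with $|\tau(v)|=q$, $v\in\tau(v)$, and $|B\cap\tau(v)|\ge\epsilon q$ by hypothesis (using that $r|_{Tv}$ is injective). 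Writing $\sigma_k(v)$ for the point of $\tau(v)$ in the level $C^{l(v)}_{f_{l(v)}(k),f_{l(v)}(k)}$, I have $\sigma_0=\id$ and $\tau(v)=\{\sigma_k(v):0\le k\le q-1\}$. The uniform Følner property of the $S_n$ gives $|r(TS_nu)\setminus r(S_nu)|\le|TS_nu\setminus S_nu|=o(|S_nu|)$ uniformly in $u$, so tiles based in $r(S_nu)$ stay inside $r(S_nu)$ up to a negligible set.

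The core is a counting step governed by colourability. Let $L$ and $E=\bigsqcup_{l\in L}C^{l}_{i_{l},i_{l}}$ be as in condition (3) of $(M,\eta)$-goodness, so $\inf_{\mu}\mu(E)>1-\eta$ and, for each $k$, the family $\{C^{l}_{f_{l}(k),f_{l}(k)}:l\in L\}$ is at most $M$-colourable. Colourability says precisely that for fixed $k$ the map $v\mapsto\sigma_k(v)$ is at most $M$-to-one on the base points in $E$, so for any set $X$,
\[
\sum_{v\in E\cap r(S_nu)}\mathbf 1_X(\sigma_k(v))\le M\,|X\cap r(S_nu)|+M\,o(|S_nu|);
\]
summing over the $q$ levels converts any tilewise count over $E$-based tiles into a fibre count at the cost of a factor $qM$. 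Moreover $\inf_\mu\mu(E)>1-\eta$ together with Proposition \ref{prop: measure equal density} (the case $N=1$) yields $|E\cap r(S_nu)|\ge(1-\eta)|S_nu|-o(|S_nu|)$ uniformly in $u$.

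The key idea is \emph{not} to subtract $A$ tile by tile (which over-discounts) but to discard whole tiles meeting $A$. Put $C'=\{v\in E\cap r(S_nu):\tau(v)\cap A=\emptyset\}$; for $v\in C'$ every $B$-point of $\tau(v)$ lies in $B\setminus A$, so $|(B\setminus A)\cap\tau(v)|\ge\epsilon q$. Running the colourable count on $B\setminus A$ over $C'$ gives
\[
\epsilon q\,|C'|\le\sum_{v\in C'}|(B\setminus A)\cap\tau(v)|\le qM\,|(B\setminus A)\cap r(S_nu)|+qM\,o(|S_nu|),
\]
hence $|(B\setminus A)\cap r(S_nu)|\ge\tfrac{\epsilon}{M}|C'|-o(|S_nu|)$. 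Now a base point $v$ with $\tau(v)\cap A\neq\emptyset$ either lies in $A$ (via $\sigma_0$) or satisfies $\sigma_k(v)\in A$ for some $k\ge1$, and the latter forces $v\in r(T^{-1}A\setminus A)$; thus $|C'|\ge|E\cap r(S_nu)|-|A\cap r(S_nu)|-|r(T^{-1}A\setminus A)\cap r(S_nu)|$. Applying $(T^{-1},\delta)^*$-invariance in the pointwise form $|r(T^{-1}A\setminus A)\cap r(S_nu)|\le\delta|A\cap r(S_nu)|$ gives $|C'|\ge|E\cap r(S_nu)|-(1+\delta)|A\cap r(S_nu)|$. Writing $|B\cap r(S_nu)|=|A\cap r(S_nu)|+|(B\setminus A)\cap r(S_nu)|$, dividing by $|S_nu|$, and inserting the coverage bound for $E$, I arrive at
\[
\frac{|B\cap r(S_nu)|}{|S_nu|}\ge\Big(1-\tfrac{\epsilon}{M}(1+\delta)\Big)\frac{|A\cap r(S_nu)|}{|S_nu|}+\tfrac{\epsilon}{M}(1-\eta)-o(1).
\]
Both coefficients are nonnegative since $\tfrac{\epsilon}{M}(1+\delta)\le\epsilon(1+\delta)<1$, so $\inf_u$ distributes; letting $n\to\infty$ and using $\inf_u|A\cap r(S_nu)|/|S_nu|\to\barbelow{D}_\CS(A)$ gives the claim.

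The main obstacle is the step where $(T^{-1},\delta)^*$-invariance is used \emph{pointwise along the given sequence} $\CS$: its definition only furnishes \emph{some} normal Følner set witnessing the ratio bound, and one cannot simply recompute $\barbelow{D}_\CS(B)$ along that witness, because lower densities are well behaved only along $(M,\epsilon_n)$-good $1$-controlled sequences — this is exactly the pathology of Example \ref{eg: ill-behaved density}. The way I would close the gap is to promote the ratio bound to the invariant-measure inequality $\mu(r(T^{-1}A\setminus A))\le\delta\,\mu(A)$ for every $\mu\in M(\CG)$: choosing a witnessing Følner set for test data $(K_n,\epsilon_n)$ and forming the empirical measures $\tfrac1{|Su|}\sum_{\gamma\in Su}\delta_{r(\gamma)}$ as in the proof of Lemma \ref{lem: density less measure}, every cluster point is $\CG$-invariant and, both sets being clopen, inherits the bound. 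A cluster-point argument along $\CS$ then converts this back into $\sup_u|r(T^{-1}A\setminus A)\cap r(S_nu)|/|A\cap r(S_nu)|\le\delta+o(1)$, with fibres on which $|A\cap r(S_nu)|$ is comparably small absorbed into the error. Verifying this transfer carefully, rather than the (clean) combinatorial core, is where the real work lies.
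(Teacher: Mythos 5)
Your combinatorial core is the same as the paper's. The paper also fixes a large $n$ and a unit $u_0$, discards the arrows $x\in S_nu_0$ whose tile $r(Tx)$ meets $A$ or whose range leaves the well-covered base $P=\bigsqcup_{l\in L}C^l_{i_l,i_l}$, lower-bounds the survivors by $\bigl(1-(1+\delta)\alpha-(\eta+\theta)\bigr)|S_nu_0|$ using exactly your identity $\{x:A\cap r(Tx)\neq\emptyset\}=\{x:r(x)\in r(T^{-1}A)\}$, and then counts $B\setminus A$ on the surviving tiles via $M$-colourability; the only cosmetic difference is that the paper extracts a single level $k$ with $\sum_{x\in F''}1_{B\setminus A}(r(\eta^k_x))\geq\epsilon|F''|$ before applying colourability, whereas you apply it at every level and sum, and the paper corrects for the Følner boundary at the end (passing from $r(TFu_0)$ back to $r(Fu_0)$) rather than inside the count. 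The one place you genuinely diverge is the step you flag as the main obstacle: the paper does not perform your measure-theoretic transfer. It simply asserts, citing Definition \ref{defn: folner set unit space} together with Proposition \ref{prop: measure equal density}, that for all large $n$ the set $F=S_n$ itself satisfies $\sup_u|r(T^{-1}A)\cap r(Fu)|/|A\cap r(Fu)|<1+\delta$; that is, it reads $(T^{-1},\delta)^*$-invariance as being witnessed along the given sequence $\CS$, which is consistent with how that property is actually produced in Proposition \ref{prop: folner set in unit space}, where the witness is always taken to be $S_n\in\CS$ for $n$ large. So your caution is a fair criticism of how the definition is phrased, but the repair you sketch is not what the paper does, and as written it does not quite close: promoting the bound to $\mu\bigl(r(T^{-1}A\setminus A)\bigr)\leq\delta\mu(A)$ for all invariant $\mu$ and transferring back along $\CS$ controls the fibrewise ratio only by $\delta\cdot\sup_{\mu}\mu(A)/\barbelow{D}_\CS(A)$ rather than by $\delta$, since the numerator is governed by the upper density and the denominator by the lower one; the fibres where $|A\cap r(S_nu)|$ is small cannot simply be absorbed into the error, because the supremum of the ratio is attained precisely there. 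If you want to keep your stricter reading of the definition, the cleaner fix is the paper's implicit one: require (or note, from Proposition \ref{prop: folner set in unit space}) that the witnessing Følner sets can be taken from $\CS$ itself.
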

\begin{proof}
First Remark \ref{rmk: right def of density} allows to define densities $\bar{D}_\CS(\cdot)$ and $\barbelow{D}_\CS(\cdot)$ at least for clopen sets. Let $\theta>0$. Using the terminology of Definition \ref{defn: good folner set}, for $T$ we denote by $P=\bigsqcup_{l\in L}C^l_{i_l. i_l}$ with $\inf_{\mu\in M(\CG)}\mu(P)\geq 1-\eta$ such that the family $\{C^l_{k, k}: l\in L\}$ is at most $M$-colorable for each $0\leq k\leq h(T)-1$. On the other hand, since $T$ has $1$-controlled height, one has $|I_l|=h(T)$ is a constant for any $1\leq l\leq m$. Then to simplify our notation, we may identify each $I_l$ by $\{0, \dots, h(T)-1\}$ in the way that $f_l$ defined in Definition \ref{defn: good folner set} is the identity.

Now Proposition \ref{prop: measure equal density}, Remark \ref{rmk: right def of density} and Definition \ref{defn: folner set unit space} imply that there is an $N\in \N$ such that for any $n>N$ the normal $(T, \theta)$-F{\o}lner set $F=S_n$ satisfies 
	\begin{enumerate}
	\item $\inf_{u\in \GU}\frac{|A\cap r(Fu)|}{|Fu|}>\barbelow{D}_\CS(A)-\theta$.
	\item $\sup_{u\in \GU}\frac{|r(T^{-1}A)\cap r(Fu)|}{|A\cap r(Fu)|}<1+\delta$.
	\item $\sup_{u\in \GU}\frac{|(\GU\setminus P)\cap r(Fu)|}{|Fu|}<\eta+\theta$.
	\end{enumerate}
Now fix a $u_0\in \GU$ and set $\alpha=\frac{|A\cap r(Fu_0)|}{|Fu_0|}\geq \barbelow{D}_\CS(A)-\theta$. Then we define $F'=\{x\in Fu_0: A\cap r(Tx)=\emptyset\}$. We claim $F'=Fu_0\setminus \{x\in Fu_0: r(x)\in r(T^{-1}A)\}$. Indeed, let $x\in Fu_0$. If $u\in A\cap r(Tx)$ then one can write $u=r(\eta)\in A$ for some $\eta\in T$ with $s(\eta)=r(x)$. Then one has $r(x)=s(\eta)=r(\eta^{-1})\in r(T^{-1}A)$ since $s(\eta^{-1})=r(\eta)=u\in A$. This implies that if $A\cap r(Tx)\neq \emptyset$ then $r(x)\in r(T^{-1}A)$. On the other hand, if $r(x)\in r(T^{-1}A)$, one writes $r(x)=r(\gamma)$ for some $\gamma\in T^{-1}$ with $s(\gamma)\in A$. Then $r(\gamma^{-1}x)=r(\gamma^{-1})=s(\gamma)\in A$. Because $\gamma^{-1}\in T$, one actually has $A\cap r(Tx)\neq \emptyset$. We have verified that $r(x)\in r(T^{-1}A)$ is equivalent to $A\cap r(Tx)\neq \emptyset$ and thus $F'=Fu_0\setminus \{x\in Fu_0: r(x)\in r(T^{-1}A)\}$.

Note that $|\{x\in Fu_0: r(x)\in r(T^{-1}A)\}|=|r(Fu_0)\cap r(T^{-1}A)|$ and thus one has 
\begin{align*}
\frac{|F'|}{|Fu_0|}&=\frac{|Fu_0|-|r(Fu_0)\cap r(T^{-1}A)|}{|Fu_0|}=1-\frac{|r(Fu_0)\cap r(T^{-1}A)|}{|A\cap r(Fu_0)|}\cdot\frac{|A\cap r(Fu_0)|}{|r(Fu_0)|}\\
&\geq 1-(1+\delta)\alpha.
\end{align*}
Now define $F''=\{x\in F': r(x)\in P\}$. Then $(3)$ above implies that \[|F''|\geq |F'|-(\eta+\theta)|Fu_0|\geq ((1-(1+\delta)\alpha)-(\eta+\theta))|Fu_0|.\]
Then since $A\cap r(Tx)=\emptyset$ for any $x\in F''\subset F'$ and $|B\cap r(Tu)|\geq\epsilon|Tu|$ holds for any $u\in \GU$ by assumption,  for any $x\in F''$, one has 
\[\sum_{\eta\in Tx}1_{B\setminus A}(r(\eta))=|(B\setminus A)\cap r(Tx)|\geq \epsilon|Tx|.\]
Then one has 
\[\sum_{x\in F''}\sum_{\eta\in Tx}1_{B\setminus A}(r(\eta))\geq \epsilon|Tx||F''|=\epsilon|F''|h(T).\]
Then for any $x\in F''$, there is an $l\in L$ such that $r(x)\in C^l_{i_l, i_l}$. Recall we have identified each $I_l$ with $\{0, \dots, h(T)-1\}$ above, one has $Tx=\bigsqcup_{k=0}^{h(T)-1}C^l_{k, 0}x$. Denote by $\{\eta^k_x\}=C^l_{k, 0}r(x)$. Then one has
\[\sum_{k=0}^{h(T)-1}\sum_{x\in F''}1_{B\setminus A}(r(\eta^k_x))\geq\epsilon|F''|h(T).\]
This implies that there is a $0\leq k\leq h(T)-1$ such that $\sum_{x\in F''}1_{B\setminus A}(r(\eta^{k}_x))\geq\epsilon|F''|$.
Since the family $\{C^l_{k, k}: l\in L\}$ is at most $M$-colorable, one has
\[|(B\setminus A)\cap \{r(\eta^{k}_x): x\in F''\}|\geq \frac{\epsilon}{M}|F''|.\]
Note that all $\eta^{k}_x\in T$. Therefore, one has 
\begin{align*}
\frac{|B\cap r(TFu_0)|}{|Fu_0|}&\geq \frac{|A\cap r(Fu_0)|}{|Fu_0|}+\frac{|(B\setminus A)\cap \{r(\eta^{k}_x): x\in F''\}|}{|F''|}\cdot \frac{|F''|}{|Fu_0|}\\
&\geq\alpha+ (\epsilon/M)(1-(1+\delta)\alpha-(\eta+\theta))\\
&\geq(1-(\epsilon/M)(1+\delta))(\barbelow{D}_\CS(A)-\theta)+(\epsilon/M)(1-\eta+\theta).\end{align*}
Finally, since $\GU\subset T$ and $F$ is a normal $(T, \theta)$-F{\o}lner set, observe that 
\begin{align*}
|B\cap r(TFu_0)|-|B\cap r(Fu_0)|&=|(B\cap r(TFu_0))\setminus(B\cap r(Fu_0))|\\
&\leq |r(TFu_0)\setminus r(Fu_0)|\leq |r(TFu_0\setminus Fu_0)|\leq \theta|Fu_0|.
\end{align*}
Therefore, one has
\[\frac{|B\cap r(Fu_0)|}{|Fu_0|}\geq (1-(\epsilon/M)(1+\delta))(\barbelow{D}_\CS(A)-\theta)+(\epsilon/M)(1-\eta+\theta)-\theta.\]
Now since $u_0$ is arbitrary, one actually has 
\[\barbelow{D}_\CS(B)\geq (1-(\epsilon/M)(1+\delta))(\barbelow{D}_\CS(A)-\theta)+(\epsilon/M)(1-\eta+\theta)-\theta.\]
Now let $\theta\to 0$ one has $\barbelow{D}_\CS(B)\geq (1-(\epsilon/M)(1+\delta))\barbelow{D}_\CS(A)+(\epsilon/M)(1-\eta).$
\end{proof}

Now we introduce several operations on multisections and castles.

\begin{defn}\label{defn: sub-castles}
Let $\CC=\{C_{i, j}: i, j\in F\}$ and $\CD=\{D_{i, j}: i, j\in E\}$ be multisections. We say $\CD$ is \textit{contained in} $\CC$ if $E\subset F$ and $D_{i, j}\subset C_{i, j}$ for any $i, j\in E$. Let $\CT=\{\CD_1,\dots, \CD_n\}$ be a castle. We say $\CT$ is contained in a multisection $\CC$ if all multisections $\CD_i$ inside $\CT$ are contained in $\CC$.
\end{defn}

\begin{defn}\label{defn: restriction of towers}
	Let $\CC=\{C_{i, j}: i, j\in F\}$ be a multisection. Let $V\subset C_{i_0, i_0}$ for some $i_0\in F$ and $T\subset F$, we write $(\CC|_{V}, T)$ for the sub-mutisection $\{B_{i, j}, i, j\in T\}$ in which each $B_{i, j}= C_{i, i_0}V(C_{j, i_0})^{-1}$. We remark that $i_0$ may or may not in $T$. 
\end{defn}

Let $\CC$ be a castle Recall our notation in Remark \ref{rmk: simplify notation}, for simplicity, we write $\CC u$=$\CH_\CC u$ for any $u\in \HU_\CC=\bigcup\CUU$ where $\CH_\CC=\bigcup \CC$.

\begin{lem}\label{lem: construct castle first step}
Let  $\CG$ be a  groupoid. Suppose $0<\epsilon<1/2$ and $S$ is a compact open set in $\CG$ such that there is a family $\{C^l_{i, j}: i, j\in F_l, 1\leq l\leq m\}$ of compact open bisections with an $i_l\in F_l$ for each $1\leq l\leq m$ such that 
\begin{enumerate}
	\item $\CC_l=\{C^l_{i, j}: i, j\in F_l\}$ is a compact open multisection for each $1\leq l\leq m$;
	\item  $S=\bigcup_{l=1}^m\bigsqcup_{i\in F_l}C^l_{i, i_l}$ and $\GU=\bigsqcup_{l=1}^mC^l_{i_l, i_l}.$
\end{enumerate} 
Let $Y$ be a clopen set in $\GU$. Then there is a compact open castle $\CT=\{\CQ_1,\dots, \CQ_n\}$  such that 
\begin{enumerate}
	\item each $\CQ_l$ itself is a castle contained in $\CC_l$ for each $1\leq l\leq m$ in the sense of Definition \ref{defn: sub-castles} and $|\CQ_l u|\geq (1-\epsilon)|F_l|$ for any $u\in \bigcup\QU_l$.
	\item 
	The set $A=\bigsqcup_{l=1}^m\bigcup\QU_l$ satisfies $Y\cap A=\emptyset$ 
	 and $|A\cap r(Su)|\geq \epsilon|Su|$ for any $u\in \GU$.
\end{enumerate}
\end{lem}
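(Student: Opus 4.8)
The plan is to build the castle $\CT$ greedily, by repeatedly carving out, inside the given multisections $\CC_l$, compact open sub-towers that avoid $Y$, use at least a $(1-\epsilon)$-fraction of the indices, and have levels disjoint from everything selected so far; the density bound will then fall out of the maximality of the resulting family. Since $\GU$ is compact and zero-dimensional and each $C^l_{i,j}$ and $Y$ are compact open, I would first fix a finite clopen partition $\CP$ of $\GU$ refining $Y$ and all the level sets $C^l_{i,i}$, and fine enough that each bisection $C^l_{i,i_l}$ carries $\CP$-atoms to $\CP$-atoms. This reduces the construction to a finite bookkeeping problem: availability (below) will be constant on atoms, the occupied set will always be a union of atoms, and no shrinking of bases will ever be needed, which in particular guarantees that the greedy process terminates and that $\CT$ is a genuine finite castle.

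Concretely, I would maintain the compact open set $U\subseteq\GU$ of range points already covered by the levels chosen so far (initially $U=\emptyset$, and $U$ only grows). Given the current $U$, call $u\in C^l_{i_l,i_l}$ \emph{available} if the index set $T(u)=\{\,i\in F_l: r(C^l_{i,i_l}u)\notin U\cup Y\,\}$ satisfies $|T(u)|\ge(1-\epsilon)|F_l|$. Availability is constant on $\CP$-atoms, so it cuts out a compact open piece $V$ of some base $C^l_{i_l,i_l}$ on which $T(u)$ equals a fixed $T$; to this piece I attach the restricted multisection $(\CC_l|_V,T)$ of Definition \ref{defn: restriction of towers}, whose levels $C^l_{i,i_l}V(C^l_{i,i_l})^{-1}$ ($i\in T$) automatically avoid $U\cup Y$ since their range points do at every point of $V$ (here the atom-refinement removes the need to shrink $V$). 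I then enlarge $U$ by these new levels and iterate until no available point remains. As $U$ strictly grows through unions of the finitely many atoms, the process stops and produces finitely many sub-multisections, which I group according to $l$ into $\CQ_1,\dots,\CQ_m$.

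Writing $A=\bigsqcup_{l=1}^m\bigcup\QU_l=U$ for the final occupied set, the verifications are then short. Condition (1) holds by construction: every sub-multisection placed in $\CQ_l$ is a restriction of $\CC_l$ in the sense of Definition \ref{defn: sub-castles} with an index set of size $\ge(1-\epsilon)|F_l|$, so $|\CQ_l u|\ge(1-\epsilon)|F_l|$ for $u$ in its base, and tracking $U$ makes all the levels pairwise disjoint, so $\CT$ really is a castle. Since each level we ever add avoids $Y$ by the definition of $T(u)$, we get $A\cap Y=\emptyset$. For the density I would argue from maximality: for any $u\in C^l_{i_l,i_l}$, the terminal state has $u$ unavailable, i.e. $|T(u)|<(1-\epsilon)|F_l|$, and since $r|_{Su}$ is injective and $A\cap Y=\emptyset$ this reads $|F_l|-|A\cap r(Su)|-|Y\cap r(Su)|<(1-\epsilon)|F_l|$, that is $|A\cap r(Su)|+|Y\cap r(Su)|\ge\epsilon|Su|$. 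When $Y$ meets the fibre $Su$ trivially — in particular at the initial step $Y=\emptyset$ — this is exactly $|A\cap r(Su)|\ge\epsilon|Su|$; in general the inequality controls $A$ together with $Y$ on each fibre, which is precisely the form that feeds $B=A\cup Y$ into the density boost of Proposition \ref{prop: estimate of density}.

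The main obstacle I anticipate is the cross-multisection disjointness: the bases $C^l_{i_l,i_l}$ partition $\GU$, but the higher levels $C^l_{i,i}$ of different multisections may overlap, so one cannot simply keep the full sub-tower on each pattern-piece and must instead carry the occupied set $U$ along and only ever use free indices, which is what forces the greedy/maximal formulation and the reduction to the finite partition $\CP$ for termination. The second delicate point is the density estimate itself: the clean output of maximality bounds $A$ \emph{together with} $Y$ on each fibre, so one must keep track of this combination rather than of $A$ alone, and the bare statement $|A\cap r(Su)|\ge\epsilon|Su|$ is what one gets precisely on fibres that $Y$ does not already occupy.
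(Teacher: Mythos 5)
Your construction is essentially the paper's: the paper likewise processes the multisections $\CC_1,\dots,\CC_m$ in turn, partitions each base $C^l_{i_l,i_l}$ into the clopen pieces $V_{T,l}$ on which the set of indices $i$ whose range point avoids the previously occupied set $A_{l-1}$ (with $A_0=Y$) is exactly $T$, keeps only the pieces with $|T|\ge(1-\epsilon)|F_l|$, attaches $(\CC_l|_{V_{T,l}},T)$, and derives the density bound from the same available/unavailable dichotomy. Two remarks. First, your reduction to a fixed finite partition $\CP$ whose atoms are carried to atoms by the bisections is neither needed nor in general achievable (a finite clopen partition invariant under finitely many partial homeomorphisms of a Cantor set need not exist); this is harmless, because at each stage the pattern map $u\mapsto T(u)$ is automatically locally constant with finitely many values (the sets $r((C^l_{i,i_l})^{-1}(U\cup Y))$ are clopen), so each multisection can be exhausted in one pass as in the paper and termination is not an issue. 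Second, and more substantively, your closing observation is correct and in fact exposes an inaccuracy in the lemma as stated: maximality only yields $|(A\cup Y)\cap r(Su)|\ge\epsilon|Su|$, because the $\ge\epsilon|F_l|$ indices at an unavailable point land in $A_{l-1}\supseteq Y$ rather than in $A$. The paper's final inequality $|A\cap r(Su)|\ge|\{i\in F_l:r(C^l_{i,i_l}u)\subset A_{l-1}\}|$ silently discards the $Y$-portion and is not valid in general; indeed, taking $Y=\GU$ forces $A=\emptyset$ and conclusion (2) fails outright. As you note, the corrected bound on $A\cup Y$ is exactly what the application in Theorem \ref{thm: almost finite in measre} consumes, where it is $B=A\sqcup Y$ that is fed into Proposition \ref{prop: estimate of density}; so your argument proves the statement that is actually needed, and the literal conclusion (2) should be weakened accordingly.
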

\begin{proof}
	Let $S=\bigcup_{l=1}^m\bigsqcup_{i\in F_l}C^l_{i, i_l}$ be the compact open set above. Denoted by $V_l$ all the $C^l_{i_l, i_l}$ for simplicity. Then $\{V_l: l=1\dots, m\}$ forms a clopen partition of $\GU$. Define $A_0=Y$ and $\CQ_0=\{A_0\}$. We then recursively define $A_1,\dots, A_m$ as well as castles
	\[\CQ_k=\{\CB_{T, k}=\{B^{T, k}_{i, j}: i, j\in T\}: T\subset F_k, |T|\geq (1-\epsilon)|F_k|\}\]
	so that $A_{k+1}=A_k\sqcup \bigcup\QU_k$ for $1\leq k\leq m-1$.
	Now, suppose we have constructed sets $A_0,\dots, A_{l-1}$ and corresponding $\CQ_0,\dots, \CQ_{l-1}$ with the desired property. Now for $V_l=C^l_{i_l, i_l}$ and $T\subset F_l$ with $|T|\geq (1-\epsilon)|F_l|$, define
	\[V_{T, l}=V_l\cap \bigcap_{i\in F_l\setminus T}r((C^l_{i, i_l})^{-1}A_{l-1})\cap \bigcap_{i\in T}(\GU\setminus r((C^l_{i, i_l})^{-1}A_{l-1})).\]
	Note that $V_{T, l}$ may be empty and we define $\CB_{T, l}=(\CC_l|_{V_{T, l}}, T)=\{B^{T, l}_{i, j}: i, j\in T\}$ in which  $B^{T, l}_{i, j}=C^l_{i, i_l}V_{T, l}(C^l_{j, i_l})^{-1}$ for any $i, j\in T$. We warn that $i_l$ may not in $T$.
	
	First we claim that $\{V_{T, l}: T\subset F_l, |T|\geq (1-\epsilon)|F_l|\}$ is disjoint. Indeed, for such $T_1, T_2\subset F_l$, if $T_1\neq T_2$, without loss of generality, one may assume there is an $i\in T_1\setminus T_2$. Now if $u\in V_{T_1, l}\cap V_{T_2, l}\neq \emptyset$, then by definition, one actually has $u\notin r((C^l_{i, i_l})^{-1}A_{l-1})$. On the other hand, $i\notin T_2$ implies that $u\in r((C^l_{i, i_l})^{-1}A_{l-1})$, which is a contradiction. This establishes our claim and thus
	\[\CQ_l=\{\CB_{T, l}=\{B^{T, l}_{i, j}: i, j\in T\}: T\subset F_l, |T|\geq (1-\epsilon)|F_l|\}\] 
	is a castle because $\CC_l$ is a multisection.
	
	Then we claim $A_{l-1}$ is disjoint from $\bigcup\QU_l$. Suppose not, let $u\in B^{T, l}_{i, i}\cap A_{l-1}$ for some $i\in T\subset  F_l$ such that $|T|\geq (1-\epsilon)|F_l|$. Note that $B^{T, l}_{i, i}=r(C^l_{i, i_l}V_{T, l})$, which implies that $\{u\}=r(C^l_{i, i_l}v)$ for some $v\in V_{T, l}\subset \GU\setminus r((C^l_{i, i_l})^{-1}A_{l-1})$ since $i\in T$. However, note that $\{v\}=r((C^l_{i, i_l})^{-1}u)\subset r((C^l_{i, i_l})^{-1}A_{l-1})$. This is a contradiction and thus establishes the claim that $A_{l-1}\cap \QU_l=\emptyset$.
	
	Finally, we define $A_l=A_{l-1}\sqcup \bigcup\QU_l$. This finishes the induction process. Now define $A=\bigsqcup_{l=1}^m\bigcup\QU_l$, which satisfies $Y\cap A=\emptyset$ by our construction. 
	
 It is left to show $|A\cap r(Su)|\geq \epsilon|Su|$ for any $u\in \GU$. Let $u\in \GU$. Then there is a $V_l$ such that $u\in V_l$ since $\{V_l: l=1,\dots, m\}$ form a partition of $\GU$. Suppose $u\in V_{T, l}$ for some $T\subset F_l$ with $|T|\geq (1-\epsilon)|F_l|$. Then because $V_{T, l}\subset V_l=C^l_{i_l, i_l}$,  one has \[|\CB_{T, l}u|=|T|\geq (1-\epsilon)|F_l\geq \epsilon|F_l|=\epsilon|Su|.\]
On the other hand, if $u\notin V_{T, l}$ for any $T\subset F_l$ with $|T|\geq (1-\epsilon)|F_l|$ then $|\{i\in F_l: r(C^l_{i, i_l}u)\subset A_{l-1}\}|\geq \epsilon|F_l|$ must hold. Otherwise, if we write $T=\{i\in F_l: r(C^l_{i, i_l}u)\not\subset A_{l-1}\}$ then $T$ has to satisfy $|T|\geq (1-\epsilon)|F_l|$. We remark that each $r(C^l_{i, i_l}u)$ is a singleton. Now it is not hard to see $u\in V_{T, l}$, which is a contradiction to our assumption on $u$ in the first place. Thus we have 
	\[|A\cap r(Su)|\geq |\{i\in F_l: r(C^l_{i, i_l}u)\subset A_{l-1}\}|\geq \epsilon|F_l|=\epsilon|Su|.\]
\end{proof}

\begin{rmk}\label{5.8}
Using the same notations of Lemma \ref{lem: construct castle first step}, let $S=\bigcup_{l=1}^m\bigsqcup_{i\in F_l}C^l_{i, i_l}$ with $\GU=\bigsqcup_{l=1}^mC^l_{i_l, i_l}$. The Lemma  actually allows to construct a castle $\CT=\{\CQ_1,\dots, \CQ_m\}$ ``inside'' $S$ in the sense that each sub-castle $\CQ_l$ 
is contained in $\CC_l=\{C^l_{i, j}: i, j\in F_l\}$ in the sense of Definition \ref{defn: sub-castles}. We will apply this to obtain a quasi-tiling of groupoids in the following theorem. 
\end{rmk}

\begin{thm}\label{thm: almost finite in measre}
Let $\CG$ be a  groupoid that admits a F{\o}lner sequence $\CS=\{S_n: n\in\N\}$ in which all $S_n$ are $(M, \epsilon_n)$-good and have $1$-controlled height for some $M\in \N$ and a decreasing sequence $\{\epsilon_n: n\in \N\}$ satisfying $\lim_{n\to \infty}\epsilon_n=0$.  Then $\CG$ is almost finite in measure.
\end{thm}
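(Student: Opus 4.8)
The plan is to run the Ornstein–Weiss accretion scheme inside the given F{\o}lner sequence: I will build the required compact open elementary subgroupoid $\CH$ as a castle assembled layer by layer, each layer being a large sub-castle cut out of one of the good shapes $S_n$, and I will monitor the lower density $\barbelow{D}_\CS$ of the already–tiled part of $\GU$, showing it increases by a definite amount at each layer and hence exceeds $1-\epsilon$ after finitely many layers. Fix the compact $K$ and $\epsilon>0$ of Definition \ref{defn: af in measure}. The crucial bookkeeping observation is that for a clopen $A\subset\GU$ one has $\barbelow{D}_\CS(A)=\inf_{\mu\in M(\CG)}\mu(A)$: by Remark \ref{rmk: right def of density} the densities are genuine limits; $1$-controlled height makes $|S_nu|=h(S_n)$ constant in $u$, so from $|A\cap r(S_nu)|+|(\GU\setminus A)\cap r(S_nu)|=|S_nu|$ (using injectivity of $r|_{S_nu}$) one gets $\bar{D}_\CS(\GU\setminus A)=1-\barbelow{D}_\CS(A)$; and Proposition \ref{prop: measure equal density} with $N=1$ applied to the clopen set $\GU\setminus A$ gives $\bar{D}_\CS(\GU\setminus A)=\sup_\mu\mu(\GU\setminus A)=1-\inf_\mu\mu(A)$. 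Thus verifying condition (2) of Definition \ref{defn: af in measure} is exactly pushing $\barbelow{D}_\CS$ of the tiled region above $1-\epsilon$.

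I fix small constants $\beta,\delta_0,\delta,\eta$ (with $\delta>\delta_0$, $\beta(1+\delta)<1$, $\delta_0+\beta<\epsilon$) and an integer $J$, whose sizes are dictated by the recursion below, and I choose the shapes $T_1,\dots,T_J$ from $\CS$ in \emph{decreasing} order of invariance. Concretely, in reverse order: pick $T_J=S_{n_J}$ which is $(K,\delta_0)$-F{\o}lner fiberwise and $(M,\eta)$-good; having chosen $T_{j+1},\dots,T_J$, pick $n_j>n_{j+1}$ so large that $T_j=S_{n_j}$ is still $(K,\delta_0)$-F{\o}lner and $(M,\eta)$-good and, in addition, every fiber $T_ju$ is so invariant that its $(1-\beta)$-fractions are $(T_i^{-1},\delta_0(1-\beta))$-F{\o}lner for all $i>j$ (possible since each $T_i^{-1}$ is a fixed compact set and $\CS$ is a F{\o}lner sequence of normal sets). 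Now I build the castle inductively, setting $A_0=\emptyset$. Given the castle $\CT^{(j-1)}$ with clopen level-union $A_{j-1}$, I apply Lemma \ref{lem: construct castle first step} to $S=T_j$ with forbidden set $Y=A_{j-1}$, producing a sub-castle $\CT_j$ of $T_j$ whose towers are $(1-\beta)$-fractions of the fibers $T_ju$, whose level-union $A'_j$ is disjoint from $A_{j-1}$, and with $|A'_j\cap r(T_ju)|\ge\beta|T_ju|$ for all $u$. I set $A_j:=A_{j-1}\sqcup A'_j$; the combined multisections still form a castle because the levels of $\CT^{(j-1)}$ and of $\CT_j$ are disjoint, so all cross products vanish.

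To run the increment I must feed Proposition \ref{prop: estimate of density} the hypothesis that $A_{j-1}$ is $(T_j^{-1},\delta)^*$-invariant; this is precisely the output of Proposition \ref{prop: folner set in unit space}. Writing $A_{j-1}=\bigcup_{c\in C}r(F_c)$ over a transversal $C$ of $\CT^{(j-1)}$, each tile $F_c$ is a right-translate of a $(1-\beta)$-fraction of some fiber $T_iu$ with $i<j$, hence by the reverse choice together with Lemma \ref{lem: shrink folner set}(1) it is $(T_j^{-1},\delta_0(1-\beta))$-F{\o}lner with $r|_{F_c}$ injective, the family $\{r(F_c)\}$ is genuinely disjoint (castle levels), and $\barbelow{D}_\CS(A_{j-1})>0$ for $j\ge2$. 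Thus $A_{j-1}$ is $(T_j^{-1},\delta)^*$-invariant, and since $A_j\supset A_{j-1}$ is clopen with $|A_j\cap r(T_ju)|\ge\beta|T_ju|$, Proposition \ref{prop: estimate of density} (with fraction $\beta$) gives
\[
\barbelow{D}_\CS(A_j)\ \ge\ \bigl(1-(\beta/M)(1+\delta)\bigr)\,\barbelow{D}_\CS(A_{j-1})+(1-\eta)\beta/M .
\]
The base case $j=1$ is the same estimate with $A_0=\emptyset$ (vacuously $(T_1^{-1},\delta)^*$-invariant), yielding $\barbelow{D}_\CS(A_1)\ge(1-\eta)\beta/M>0$, which supplies the positivity needed at the next step.

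Finally I read off both conditions. With $d_j=\barbelow{D}_\CS(A_j)$, $\lambda=(\beta/M)(1+\delta)$ and $c=(1-\eta)\beta/M$, the recursion $d_j\ge(1-\lambda)d_{j-1}+c$ with $d_0=0$ gives $d^*-d_j\le(1-\lambda)^j d^*$ where $d^*=c/\lambda=(1-\eta)/(1+\delta)$; choosing $\eta,\delta$ small makes $d^*>1-\epsilon/2$ and choosing $J$ large makes $(1-\lambda)^Jd^*<\epsilon/2$, so $d_J>1-\epsilon$. Put $\CH=\bigcup\CT^{(J)}$, a compact open elementary subgroupoid with $\HU=A_J$. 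The correspondence of the first paragraph gives $\mu(\HU)=\inf_\mu\mu(A_J)=d_J>1-\epsilon$ for every $\mu\in M(\CG)$, which is condition (2). For condition (1), each fiber $\CH u$ is a right-translate of one of the tiles $F_c$, a $(1-\beta)$-fraction of a $(K,\delta_0)$-F{\o}lner fiber, hence $(K,\delta_0+\beta)$-F{\o}lner by Lemma \ref{lem: shrink folner set}(1); since $\delta_0+\beta<\epsilon$ this yields $|K\CH u\setminus\CH u|<\epsilon|\CH u|$ for all $u\in\HU$. I expect the main obstacle to be the \emph{ordering of the shapes}: the increment requires the tiled region $A_{j-1}$ to be invariant relative to the \emph{next} shape $T_j$, and converting ``tiled by $T_i$-tiles'' into ``$(T_j^{-1},\delta)^*$-invariant'' through Proposition \ref{prop: folner set in unit space} forces the earlier shapes to be far more invariant than the later ones, so the reverse nesting must be reconciled with the fixed target $K$ that every shape must still respect for condition (1); the $1$-controlled height hypothesis is exactly what keeps the fiber cardinalities constant and makes the density/measure bookkeeping exact throughout.
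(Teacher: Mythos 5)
Your proposal is correct and follows essentially the same route as the paper: carve sub-castles out of successive good shapes via Lemma \ref{lem: construct castle first step}, certify $(T_j^{-1},\delta)^*$-invariance of the already-tiled region with Proposition \ref{prop: folner set in unit space}, iterate the density increment of Proposition \ref{prop: estimate of density} until the tiled region has lower density above $1-\epsilon$, and convert density to measure via Proposition \ref{prop: measure equal density}. The only difference is notational — you index the shapes in decreasing order of invariance and build forward, while the paper indexes them so that $S_p$ is F{\o}lner with respect to $S_q^{-1}$ for $q<p$ and builds the castles from $\CT_n$ down to $\CT_1$ — which is the same construction up to relabeling.
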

\begin{proof}
	Let $K$ be a compact set in $\CG$ and $0<\epsilon<1/2$. Now choose a $0<\eta<\epsilon$. Then there is an $N\in \N$ such that for all $n>N$ one has $\epsilon_n<\eta$ so that the F{\o}lner set $S_n$ is in fact $(M, \eta)$-good. Thus, without loss of any generality, we may assume all $S_n\in \CS$ is $(M, \eta)$-good. Choose a $0<\beta<(\epsilon-\eta)/(1-\epsilon)$ and an $n\in \N_+$ such that $(1-\epsilon/M)^n<1-\frac{(1-\epsilon)(1+\beta)}{1-\eta}$ and $\epsilon(1+\beta)<1$. Our construction implies that $(1+\beta)^{-1}(1-(1-\frac{(1+\beta)\epsilon}{M})^n)(1-\eta)>1-\epsilon$.  Choose another number $0<\beta_0<\beta$. Then by induction, one can find normal F{\o}lner sets $S_1, \dots, S_n$ such that for each $1\leq p\leq n$ one has
	\begin{enumerate}[label=(\roman*)]
		\item $\CC^p_l=\{C^l_{i, j}: i, j\in F^p_l\}$ is a compact open multisection for each $1\leq l\leq m_p$;
		\item $S_p=\bigcup_{l=1}^{m_p}\bigsqcup_{i\in F_l^p}C^{l, p}_{i, i^p_l}$ 
		\item $\GU=\bigsqcup_{l=1}^{m_p}C^{l, p}_{i^p_l, i^p_l}$; 
		\item $S_pu$ is $(K, \epsilon-\beta_0\epsilon)$-F{\o}lner for any $u\in \GU$; 
		\item $S_pu$ is $(S^{-1}_q, \beta_0(1-\epsilon)-\beta_0\epsilon)$-F{\o}lner for any $q<p$ and $u\in \GU$; and
		 \item each $S_p$ is $(M, \eta)$-good and has $1$-controlled height.
	\end{enumerate}
Now, by a recursive procedure, we will construct from $n$ to $1$ compact open castles $\CT_n,\dots,\CT_1$ such that
\begin{enumerate}
	\item $\bigcup\TU_n,\dots, \bigcup\TU_1$ are disjoint sets.
	\item each $\CT_p$ is ``inside'' $S_p$ in the sense of Remark \ref{5.8} and $|\CT_p u|\geq (1-\beta\epsilon)|F^p_l|$ whenever $u\in (\bigcup\TU_p)\cap (\bigcup(\CC^p_l)^{(0)})$ with $1\leq l\leq m_p$.
	\item $|(\bigcup\CT^{(0)}_p)\cap r(S_pu)|\geq \epsilon|S_pu|$ for any $u\in \GU$ and $1\leq p\leq n$.
	\item $\barbelow{D}_\CS(\bigcup_{p=k}^n\bigcup\TU_p)\geq (1+\beta)^{-1}(1-(1-(\epsilon/M)(1+\beta))^{n+1-k})(1-\eta)$ for each $1\leq k\leq n$.
\end{enumerate}
	First, plug $Y=\emptyset$ and $S=S_n$ into Lemma \ref{lem: construct castle first step} to obtain a castle $\CT_n$ satisfying (1)-(3) above. Define $A_n=\bigcup\TU_n$. Then Proposition \ref{prop: estimate of density} implies that $\barbelow{D}^-_\CS(A_n)\geq (\epsilon/M)(1-\eta)$, which satisfies (4) as well.
	
	Now, let $1\leq k\leq n$ and suppose we have constructed $\CT_{n}, \dots, \CT_{k+1}$ satisfying (1)-(4) above. We define $A_p=\bigcup\TU_p$ for all $k+1\leq p\leq n$. Now plug  $Y=\bigsqcup_{p=k+1}^nA_p$ and $S=S_k$ in Lemma \ref{lem: construct castle first step} to obtain a castle 
	\[\CT_k=\{\CB^k_{T, l}=\{B^{k, T, l}_{i, j}: i, j\in T \}: T\subset F^k_l, |T|\geq (1-\beta_0\epsilon)|F^k_l|, l=1,\dots, m_k\}\]
	such that, if write $A_k$ for $\bigcup\TU_k$, then one has $A_k\cap (\bigsqcup_{p=k+1}^nA_p)=\emptyset$ and $|A_k\cap r(S_ku)|\geq \epsilon|S_ku|$ for any $u\in \GU$.
	
	Now write $B=\bigsqcup_{p=k}^nA_p$ and $A=\bigsqcup_{p=k+1}^nA_p$.
	Let $p>k$. For any $u\in \bigcup\TU_p$, (2) above implies that there is an $l\leq m_p$ and an $i\in F^p_l$  such that $u\in C^{l, p}_{i, i}$ and $|\CT_pu|\geq (1-\beta_0\epsilon)|F^p_l|$. Let $\gamma\in C^{l, p}_{i^p_l, i}$ with $s(\gamma)=u$. Then $r(\gamma)\in C^{l, p}_{i^p_l, i^p_l}$. Observe that $\CT_pu\gamma^{-1}\subset S_pr(\gamma)$ and satisfies \[|\CT_pu\gamma^{-1}|=|\CT_pu|\geq (1-\beta_0\epsilon)|F^p_l|=(1-\beta_0\epsilon)|S_pr(\gamma)|.\]
	Then Lemma \ref{lem: shrink folner set} implies that $\CT_pu$ is $(S^{-1}_k, \beta_0(1-\epsilon))$-F{\o}lner as well as $(K, \epsilon)$-F{\o}lner  by the setting (iv) and (v) for all $S_n,\dots, S_1$.
	Now  for each $p>k$, choose a transversal $D_p$ of each castle $\CT_p$ in the sense of Definition \ref{defn: transversal}. Then $A=\bigsqcup_{p=k+1}^n\bigcup\TU_p=\bigsqcup_{p=k+1}^n\bigsqcup\{r(\CT_pu): u\in D_p\}$ in which $\{r(\CT_pu): u\in D_p, k+1\leq p\leq n\}$ is a disjoint family. In addition, one has $\barbelow{D}_\CS(A)\geq \barbelow{D}_\CS(A_n)\geq (\epsilon/M)(1-\eta)>0$. Then
	 Proposition \ref{prop: folner set in unit space} implies that $A$ is $(S_k^{-1}, \beta)^*$-invariant. Finally, because $|B\cap r(S_ku)|\geq |A_k\cap r(S_ku)|\geq \epsilon|S_ku|$ for any $u\in \GU$. Then Proposition \ref{prop: estimate of density} implies
	 \begin{align*}
	 	\barbelow{D}_\CS(B)&\geq (1-(\epsilon/M)(1+\beta))\barbelow{D}_\CS(A)+(\epsilon/M)(1-\eta)\\
	 	&\geq (1-\frac{\epsilon(1+\beta)}{M})(1+\beta)^{-1}(1-(1-\frac{\epsilon(1+\beta)}{M})^{n+1-(k+1)})(1-\eta)+\frac{\epsilon(1-\eta)}{M}\\
	 	&=(1+\beta)^{-1}(1-(1-(\epsilon/M)(1+\beta))^{n+1-k})(1-\eta)
	 \end{align*}
as desired. Thus we have finished the construction. 

Now we write $\CT=\{\CT_1,\dots, \CT_n\}$, which is a castle and denote by $\CH_\CT$ the subgroupoid generated by $\CT$ such that $\HU_\CT=\bigsqcup_{p=1}^n\bigcup\TU_p$. Then, for any $u\in \HU_\CT$, there is a $1\leq p\leq n$ such that $\CH_\CT u=\CT_pu$ and thus our construction above implies that 
\[|K\CH_\CT u\setminus\CH_\CT u|=|K\CT_pu\setminus \CT_pu|\leq \epsilon|\CT_pu|=\epsilon|\CH_\CT u|.\]
In addition, one has
\[\barbelow{D}_\CS(\HU_\CT)=\barbelow{D}_\CS(\bigsqcup_{p=1}^n\bigcup\TU_p)\geq (1+\beta)^{-1}(1-(1-(1+\beta)(\epsilon/M))^n)(1-\eta)>1-\epsilon,\]
which implies that $\inf_{\mu\in M(\CG)}(\HU_\CT)>1-\epsilon$ by Remark \ref{rmk: right def of density}.
\end{proof}
Finally, we provide an example on groupoids generated by partial dynamical systems.

\begin{eg}\label{eg: partial dynamical system}
	Let $\Gamma$ be a finitely generated infinite amenable group with generators $g_1,\dots, g_n$. Let $X$ be the Cantor set.  For each $1\leq i\leq n$, assume $g_i$ acts on $X$ in a way that  the $g_ix$ is defined for all but finitely many points in $X$, which means
	 \[g_i: X\setminus \{x^i_1, \dots, x^i_{n_i}\}\to X\setminus\{y^i_1, \dots, y^i_{n_i}\}\]
	 is a homeomorphism. Then these naturally induce a partial dynamical system of $\Gamma$ on $X$, denoted by $\alpha$. Then we consider the groupoid $\CG=X\rtimes_\alpha \Gamma$. Suppose $\alpha$ is free and  for each unit $x\in X$, the source orbit $\CG x$ is infinite.  We claim $\CG$ is almost finite in measure.
	 
	 Indeed, first observe that for any finite set $F\subset \Gamma$, there are all but only finitely many points $x\in X$ such that $gx$ is well-defined for any $g\in F$. Now let $\{F_n:n\in \N\}$ be a F{\o}lner sequence of $\Gamma$ with $e_\Gamma\in F_n$.  Let $n\in \N^+$, we enumerate all points $x$ such that $F_nx$ is not defined by $\{x_1,\dots, x_m\}$. In addition, because each source fiber $\CG x$ is infinite and the partial action is free, for any $x_i$ there is a $g_i\in \Gamma$ such that $g_ix_i\in X\setminus \{x_1,\dots, x_m\}$. Now either define $T_{n, i}=F_ng_i$ if $g^{-1}_i\in F_n$ or define $T_{n, i}=\{e_\Gamma\}\cup (F_n\setminus \{h_i\})g_i$ for some arbitrary $h_i\in F_n$ when $g^{-1}_i\notin F_n$. Observe that in either case $|T_{n, i}\Delta F_ng_i|\leq 2$ and $|T_{n, i}|=|F_n|$.
	 Then for each $x_i$, one can choose a clopen neighborhood $N_i$ such that
	 \begin{enumerate}
	 	\item  each $(T_{n, i}, N_i)$ is a tower,
	 	\item $\{N_i: 1\leq i\leq m\}$ is a disjoint family and
	 	\item $\sup_{\mu\in M(\CG)}\mu(N_i)<1/nm$ by \cite[Lemma 6.13]{M-W}.
	 \end{enumerate} 
  Then choose a clopen partition $\{O_1, \dots, O_k\}$ of $X\setminus \bigsqcup_{i=1}^mN_i$ such that $(F_n, O_j)$ is a tower for any $1\leq j\leq k$. Finally define 
  \[S_n=\bigcup_{j=1}^k\{(\gamma x, \gamma. x): \gamma\in F_n, x\in O_j\}\cup \bigcup_{i=1}^m\{(\gamma x, \gamma. x): \gamma\in T_{n, i}, x\in N_i\}\]
  and   easy to verify that $\{S_n: n\in \N\}$ is a F{\o}lner sequence in which all $S_n$ is $(1, 1/n)$-good and has the $1$-controlled height. Therefore, $\CG$ is almost finite in measure.
 \end{eg}

\section{Uniform property $\Gamma$}
In this section we provide the main application of our almost finiteness in measure by establishing Theorem B that  reduced $C^*$-algebra $C^*(\CG)$ has the uniform property $\Gamma$ for a minimal principal second countable groupoid $\CG$, which is almost finite in measure.
This result is motivated by the same result in \cite{D-G} for transformation groupoids. However, as we explains in the introduction, their method cannot be simply generalized to deal with general almost finite in measure groupoids. Instead, we use the technique called \textit{extendability} and \textit{almost elementariness} introduced in \cite{M-W} to establish the result. Recall the following definition of the \textit{uniform property $\Gamma$} introduced in \cite{C-E-T-W}.
\begin{defn}\cite[Definiton 2.1 and Proposition 2.2]{C-E-T-W}\label{defn:uniform gamma}
Let $A$ be a separable $C^*$-algebra with a compact $T(A)\neq \emptyset$. Then $A$ is said to have uniform property $\Gamma$ if for any finite subset $\CF\subset A$, any $\epsilon>0$, and any $n\in \N$, there exist pairwise orthogonal positive contractions $e_1,\dots, e_n\in A$ such that for $i=1,\dots, n$ and $a\in \CF$, one has $\|[e_i, a]\|<\epsilon$ and $\sup_{\tau\in T(A)}|\tau(ae_i)-(1/n)\tau(a)|<\epsilon$.
\end{defn}
Let $\CC=\{C_{i,j}^{l}:i,j\in F_{l},l\in I\}$ be a castle and $K$
be a compact set in $\CG$ with $\CG^{(0)}\subset K$. We say that
$\CC$ is $K$-\emph{extendable} if there is another castle $\CD=\{D_{i,j}^{l}:i,j\in E_{l},l\in I\}$ with $\CC\subset \CD$
such that 
\[
K\cdot\bigsqcup_{i,j\in F_{l}}C_{i,j}^{l}\subset\bigsqcup_{i,j\in E_{l}}D_{i,j}^{l}
\]
where $E_{l}\subset F_{l}$ and $C_{i,j}^{l}=D_{i.j}^{l}$ if $i,j\in E_{l}$
for all $l=1,\dots m$. In this case, we also say that $\CC$ is $K$-extendable
to $\CD$. Now we have the following definition, which is a weak version of \cite[Definition 6.9]{M-W}. In addition, since all groupoid considered in this paper are ample, we may ask the castles below to be compact open. See \cite[Proposition 6.12]{M-W}.
\begin{defn}
	\label{defn: ae in measure} Let $\CG$ be a groupoid
	with a compact unit space. We say that $\CG$ is \textit{almost elementary in measure}
	if for any compact set $K$ satisfying $\CG^{(0)}\subset K\subset\CG$,
	any open cover $\CV$ and $\epsilon>0$, there
	are compact open castles $\CC=\{C_{i,j}^{l}:i,j\in F_{l},l\in I\}$ and $\CD=\{D_{i,j}^{l}:i,j\in E_{l},l\in I\}$
	satisfying 
	\begin{enumerate}[label=(\roman*)]
		\item $\CC$ is $K$-extendable to $\CD$; 
		\item every $\CD$-level is contained in an open set $V\in\CV$; 
		\item $\sup_{\mu\in M(\CG)}\mu(\CG^{(0)}\setminus\bigsqcup_{l\in I}\bigsqcup_{i\in F_{l}}C_{i,i}^{l})<\epsilon$. 
	\end{enumerate}
\end{defn}

We remark that we mainly focus on the groupoids $\CG$ such that $M(\CG)\neq \emptyset$. This is because all topological principal ample groupoids $\CG$ such that  $M(\CG)=\emptyset$ is almost elementary in measure automatically. This is because first, without loss of generality, one may assume $K=\bigcup_{i=1}^nO_i$ for some compact open bisections $O_1=\GU,\dots, O_n$.  Then since $\CG$ is topological principal, there are a $u\in \GU$ and $1=i_1<\dots< i_k\leq n$ satisfying that there is a small compact open neighborhood $W$ of $u$ in $\GU$ such that $\{r(O_{i_j}W): j=1, \dots, k\}$ is disjoint and each $r(O_{i_j}W)\subset V$ for some $V\in \CV$ as well as $KW=\bigsqcup_{j=1}^kO_{i_j}W$. Then we define $\CC=\{W\}$ and $\CD=\{O_{i_j}WO^{-1}_{i_l}: 1\leq j,l\leq k\}$, which satisfy Definition \ref{defn: ae in measure}. On the other hand, when $\CG$ is fiberwise amenable, in which case $M(\CG)\neq \emptyset$, we have the following result, which was essentially established in \cite{M-W}.  
\begin{prop}\label{thm: ae in measure equal af in measure}
 Let $\CG$ be a $\sigma$-compact groupoid. Then if $\CG$ is almost finite in measure then $\CG$ is almost lelemenatry in measure and fiberwise amenable. If $\CG$ is additionally assumed to be minimal and second countable, the converse also holds.
\end{prop}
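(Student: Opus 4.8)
The plan is to run the measure-theoretic analogue of the equivalence between almost finiteness and almost elementariness from \cite{M-W}, replacing every covering/F{\o}lner estimate taken uniformly over $\GU$ by the corresponding estimate taken against all of $M(\CG)$. Throughout I use the standing focus $M(\CG)\neq\emptyset$, so that the unit space of any elementary subgroupoid witnessing Definition \ref{defn: af in measure} is automatically nonempty.

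For the forward direction, suppose $\CG$ is almost finite in measure. Fiberwise amenability is immediate: given a compact $K$ with $\GU\subset K$ and $\epsilon>0$, Definition \ref{defn: af in measure} produces a compact open elementary $\CH$ with $|K\CH u\setminus\CH u|<\epsilon|\CH u|$ for all $u\in\HU$ and $\mu(\HU)>1-\epsilon$ for every $\mu\in M(\CG)$; since $M(\CG)\neq\emptyset$ forces $\HU\neq\emptyset$, any $u\in\HU$ gives a finite set $\CH u$ with $|K\CH u|\le(1+\epsilon)|\CH u|$, which is a witness of fiberwise amenability via Proposition \ref{prop:fiberwise-amenability-metric}. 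For almost elementariness in measure, fix $K\supset\GU$, an open cover $\CV$ and $\epsilon>0$, and apply almost finiteness in measure to obtain such an $\CH$ with $\mu(\HU)>1-\epsilon$ and small $K$-boundary. Writing $\CH=\bigcup\CC$ for a compact open castle $\CC=\{C_{i,j}^l:i,j\in F_l,l\in I\}$, I first refine $\CC$ by the standard chopping technique so that every level lies in some $V\in\CV$. I then build the extension $\CD$: the set $K\cdot\bigsqcup_{i,j\in F_l}C_{i,j}^l$ is compact, so Proposition \ref{build-tower} and the multisection construction let me enlarge each index set to $E_l\supset F_l$, producing a castle $\CD\supset\CC$ with $K\cdot\bigsqcup_{i,j\in F_l}C_{i,j}^l\subset\bigsqcup_{i,j\in E_l}D_{i,j}^l$ and $C_{i,j}^l=D_{i,j}^l$ on $F_l$, the new levels being chosen fine enough to lie inside $\CV$. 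Thus $\CC$ is $K$-extendable to $\CD$, every $\CD$-level sits in $\CV$, and $\sup_{\mu\in M(\CG)}\mu(\GU\setminus\bigsqcup_{l,i}C_{i,i}^l)=\sup_{\mu}\mu(\GU\setminus\HU)<\epsilon$, which is exactly Definition \ref{defn: ae in measure}.

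For the converse, assume $\CG$ is minimal, second countable, fiberwise amenable and almost elementary in measure; by Theorem \ref{thm:minimal-fiberwise-amenability} it is ubiquitously fiberwise amenable. Given $K$ and $\epsilon$, the target is a compact open elementary $\CH=\bigcup\CC$ whose source fibers are $(K,\epsilon)$-F{\o}lner and whose base has measure greater than $1-\epsilon$. Ubiquitous fiberwise amenability, together with Lemmas \ref{lem: big ball} and \ref{lem: Folner length arbitrary long}, supplies castles whose fibers are $(K,\epsilon)$-F{\o}lner and can be made uniformly long, while almost elementariness in measure supplies $K$-extendable castles $\CC\subset\CD$ whose levels cover all but an $\epsilon$-measure set of $\GU$. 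The remaining work is to merge these two inputs into a single castle $\CC$ that has both $(K,\epsilon)$-F{\o}lner source fibers and $\sup_{\mu\in M(\CG)}\mu(\GU\setminus\bigsqcup C_{i,i}^l)<\epsilon$. Once $\CC$ has F{\o}lner fibers, $|K\CH u\setminus\CH u|<\epsilon|\CH u|$ holds directly for $u\in\HU$, and the measure-covering clause of Definition \ref{defn: ae in measure} provides the second requirement of Definition \ref{defn: af in measure}.

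The main obstacle is exactly this merging step. One must reconcile the F{\o}lner requirement coming from fiberwise amenability with the measure-covering requirement coming from almost elementariness in measure, making the towers long and F{\o}lner \emph{simultaneously on every tower} $l$ while keeping $\mu(\HU)$ close to $1$; this is the step where minimality (to grow F{\o}lner fibers uniformly at every unit via Lemma \ref{lem: big ball}) and second countability (to exhaust $\CG$ by compact sets and extract the castle by a limiting/iteration argument) are indispensable, and it mirrors the corresponding implication relating almost finiteness to almost elementariness established in \cite{M-W}. Once the uniform F{\o}lner bound is secured, both clauses of Definition \ref{defn: af in measure} hold and $\CG$ is almost finite in measure.
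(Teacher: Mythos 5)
Your overall architecture is right, and the paper in fact disposes of both implications by citing the detailed constructions in \cite{M-W}: the forward direction is obtained by running the proof of Theorem 7.4 there (with its final three lines deleted), and the converse by the first half of the proof of Proposition 7.6 there. Your treatment of fiberwise amenability in the forward direction is correct and complete, including the observation that $M(\CG)\neq\emptyset$ forces $\HU\neq\emptyset$. The difficulties lie in the two places where the actual work happens, and in both you stop short.

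First, in the forward direction your construction of the extension castle $\CD$ does not go through as written. Proposition \ref{build-tower} requires $r|_{Kv}$ to be injective, a freeness hypothesis not available here, and it only produces a multisection around a single unit. More seriously, the $K$-translates $K\cdot\bigsqcup_{i,j\in F_l}C^l_{i,j}$ do not decompose into bisections compatible with the multisection structure of $\CC$ without first refining the base and discarding part of the castle; once anything is discarded, your claimed identity $\sup_{\mu}\mu(\GU\setminus\bigsqcup_{l,i}C^l_{i,i})=\sup_{\mu}\mu(\GU\setminus\HU)$ no longer holds and clause (iii) of Definition \ref{defn: ae in measure} has to be re-established for the surviving sub-castle. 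That re-derivation is precisely the content of the argument in \cite{M-W} that the paper leans on. Second, in the converse you correctly isolate the merging of the F{\o}lner requirement with the measure-covering requirement as ``the main obstacle,'' but you then leave it unresolved: nothing in your text explains how to produce one castle whose source fibers are $(K,\epsilon)$-F{\o}lner \emph{and} whose levels cover all but an $\epsilon$-measure set for every $\mu\in M(\CG)$. Since that is the entire content of the implication, the converse is not proved. The paper's route is to reuse the castle $\CC'$ built in the first half of the proof of \cite[Proposition 7.6]{M-W}, where ubiquitous fiberwise amenability is injected into the extendable castle supplied by almost elementariness (in measure) to select F{\o}lner sub-fibers tower by tower; some version of that construction must be carried out explicitly for your argument to close.
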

\begin{proof}
	The first part of the following theorem can be established by using the same proof of \cite[Theorem 7.4]{M-W} by deleting the final three lines. The second part can be shown by the first half of the proof of \cite[Proposition 7.6]{M-W}, i.e.,  the elementary groupoid $\CH_{\CC'}$ for the castle $\CC'$ defined in the proof of \cite[Proposition 7.6]{M-W} satisfying  $|K\CH_{\CC'}u|<(1+\epsilon)|\CH_{\CC'}u|$ for any $u\in \HU_{\CC'}$ and $\sup_{\mu\in M(\CG)}\mu(\GU\setminus\HU_{\CC'})<\epsilon$ for a prescribed compact set $K\subset \CG$ and $\epsilon>0$ as desired.
	\end{proof}

Then we proceed as in Section 8 of \cite{M-W} in a virtually identical way. In fact, the only difference between almost elementariness in measure in Definition \ref{defn: ae in measure} and the original almost elementariness introduced in \cite{M-W} is how to evaluate the ``smallness'' of the reminder beyond the castle $\CC$. The almost elementariness ask that the reminder is dynamical small, which is stronger than the smallness uniformly in measure in  Definition \ref{defn: ae in measure} for almost elementary in measure. Therefore all results in the Section 8 of \cite{M-W} for almost elementariness can be reproduced to obtain their ``in measure'' versions by deleting the \emph{groupoid strict comparison} (see \cite[Definition 6.2]{M-W}) in their proofs. Thus we have the following nesting version of almost finiteness in measure written in the language of extenability by proposition \ref{thm: ae in measure equal af in measure} and the discussion above. Compare it to Theorem 8.12 in \cite{M-W}. 

\begin{prop}\label{prop:nesting}
	Let $\CG$ be a  minimal groupoid. Suppose $\CG$ is
	almost finite in measure. Then for any compact set $K=\bigcup_{i=0}^nM_i$ in which all $M_i$ are compact open bisections and $M_0=\GU$,  any $\epsilon>0$,
	any open cover $\CV$ and any integer $N\in\N$ there are compact open castles
	$\CA$, $\CB$, $\CC$ and $\CD$ such that 
	\begin{enumerate}
		\item for any $0\leq i\leq n$ and $\CC$-level $C$, either $C\subset s(M_{i})$
		or $C\cap s(M_{i})=\emptyset$ and 
		\item whenever a $\CC$-level $C\subset s(M_{i})$ for some $i\leq n$ then
		there is a $D\in\CD$ such that $s(D)=C$ and $M_{i}C=D$. 
		\item $\CA$ is $K$-extendable to $\CB$ and $\CC$ is $K$-extendable
		to $\CD$; 
		\item $\CB$ is nested in $\CD$ with multiplicity at least $N$ in the sense of \cite[Definition 8.8]{M-W}; 
		\item $\CA$ is nested in $\CC$ with multiplicity at least $N$ in the sense of \cite[Definition 8.8]{M-W}; 
		\item any $\CD$-level is contained in a member of $\CV$; 
		\item $\sup_{\mu\in M(\CG)}\mu(\GU\setminus\bigcup\CA^{(0)})<\epsilon$. 
	\end{enumerate}
	\qed
\end{prop}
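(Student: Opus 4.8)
The plan is to transfer the corresponding theorem of \cite{M-W}, namely \cite[Theorem 8.12]{M-W}, from its original hypotheses to the present ``in measure'' setting. The first step is a reduction: since $\CG$ is almost finite in measure, Proposition \ref{thm: ae in measure equal af in measure} shows that $\CG$ is simultaneously fiberwise amenable and almost elementary in measure in the sense of Definition \ref{defn: ae in measure}. These two properties are precisely the ``in measure'' counterparts of the standing hypotheses of \cite[Theorem 8.12]{M-W}, where almost elementariness is used together with groupoid strict comparison. Thus the goal is to run the construction of \cite[Theorem 8.12]{M-W} with almost elementariness in measure in place of almost elementariness, and to show that strict comparison can be dispensed with.

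The construction itself is an iterated application of the elementariness property, carried out exactly as in Section 8 of \cite{M-W}. First I would apply almost elementariness in measure to a sufficiently large compact set and a sufficiently fine open refinement of $\CV$ to produce the inner pair $(\CA,\CB)$, with $\CA$ being $K$-extendable to $\CB$. Next I would apply the property a second time, with a much larger compact set and an open refinement of $\CV$ fine enough to guarantee condition (6), to produce the outer pair $(\CC,\CD)$ in such a way that $\CB$ lies inside $\CD$ and $\CA$ inside $\CC$ with the prescribed multiplicity $N$; the multiplicity is afforded by fiberwise amenability, which by minimality upgrades to ubiquitous fiberwise amenability (Theorem \ref{thm:minimal-fiberwise-amenability}), so that the F{\o}lner-type control of Theorem \ref{thm: dichotomy} lets the fibers of the outer castle be taken long enough to accommodate $N$ disjoint translates of the inner castle. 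This yields the extendability conditions (3) and the nesting conditions (4) and (5). The compatibility conditions (1) and (2) are obtained by the routine chopping of the $\CC$-levels along the clopen partition of $\GU$ generated by the sets $s(M_i)$ --- each $M_i$ being a compact open bisection --- after which each $\CC$-level is either contained in or disjoint from every $s(M_i)$, and, since $M_i\subset K$ and $\CC$ is $K$-extendable to $\CD$, the ladder $M_iC$ is recorded as a $\CD$-level.

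The only genuine departure from \cite{M-W} occurs in the estimate of the remainder $\GU\setminus\bigcup\CA^{(0)}$. In \cite[Theorem 8.12]{M-W} this remainder is shown to be dynamically small, and groupoid strict comparison is invoked solely to convert that dynamical smallness into the required bound; in the present setting clause (iii) of Definition \ref{defn: ae in measure} already furnishes the uniform measure estimate $\sup_{\mu\in M(\CG)}\mu(\GU\setminus\bigcup\CA^{(0)})<\epsilon$ for the base castle produced, which is exactly condition (7). Consequently the main point to verify --- and the step I expect to be the real obstacle --- is that in the argument of \cite[Theorem 8.12]{M-W} strict comparison enters \emph{only} at this final remainder estimate and plays no role in the extendability or in the combinatorial nesting. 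Granting this (which is the content of the author's remark that the Section~8 results reproduce verbatim upon deleting strict comparison), one simply reads off (7) from almost elementariness in measure and the proof is complete.
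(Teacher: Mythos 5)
Your proposal matches the paper's own argument: the paper likewise reduces via Proposition \ref{thm: ae in measure equal af in measure} to almost elementariness in measure plus fiberwise amenability, then reruns the construction of \cite[Theorem 8.12]{M-W} verbatim, observing that groupoid strict comparison is used there only to control the remainder $\GU\setminus\bigcup\CA^{(0)}$, a bound that clause (iii) of Definition \ref{defn: ae in measure} now supplies directly. Your account is essentially the same proof, spelled out in somewhat more detail than the paper's brief citation of Section 8 of \cite{M-W}.
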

Then we recall a fundamental construction introduced in \cite{M-W} that will be the main tool to establish Theorem B.
\begin{rmk}\label{rmk: ccp map}
	\label{8.8} Let $\CG$ be a groupoid. Let $n\in\N$ and $\epsilon>0$. In addition,
	let $N\in\N$ such that $N>2/\epsilon$ and $K$ be a compact set
	in $\CG$. Suppose $\CA,\CB,\CC$ and $\CD$ are compact open castles such
	that 
	\begin{enumerate}[label=(\roman*)]
		\item $\CA$ is $K$-extendable to $\CB$ and $\CC$ is $K$-extendable
		to $\CD$. 
		\item $\CB$ is nested in $\CD$ with multiplicity at least $nN$. 
		\item $\CA$ is nested in $\CC$ with multiplicity at least $nN$. 
		\item $\mu(\bigcup\CA^{(0)})>1-\epsilon/2$ for any $\mu\in M(\CG)$. 
	\end{enumerate}
	If we write $\CA$ and $\CB$ explicitly, say, by $\CA=\{A_{i,j}^{l}:i,j\in F_{l},l\in I\}$
	and $\CB=\{B_{i,j}^{l}:i,j\in E_{l},l\in I\}$ where $F_{l}\subset E_{l}$
	for each $l\in I$.
	Then consider the collection $\{1_{B}:B\in\CB\}$ of functions in $C_{c}(\CG)$ satisfying
	\begin{enumerate}[label=(\roman*)]
		\item $s(1_{B})=1_{s(B)}$ and $r(1_{B})=1_{r(B)}$ for each $B\in\CB$. 
		\item $1_{r(B)}*1_{B}=1_{B}$ and $1_{B}*1_{s(B)}=1_{B}$ for each $B\in\CB$. 
	\end{enumerate}
Now we write $\CC$ and $\CD$ explicitly by $\CC=\{C_{t,s}^{p}:t,s\in T_{p},p\in J\}$
	and $\CD=\{D_{t,s}^{p},t,s\in S_{p},p\in J\}$. Let $\HU\subset\CD^{(0)}$
	be a subset containing $\CC^{(0)}$. Now, since $\CUU\subset\HU$,
	one has that $\HU$ contains some $\CD$-levels from multisection
	$(\CD^{p})^{(0)}$ for any $p\in J$. Denote by $(\CH^{p})^{(0)}=(\CD^{p})^{(0)}\cap\HU$.
	Now, for each $p\in J$,  let $l\in I$ such that $\CB^{l}$ is
	nested in $\CD^{p}$ with multiplicity at least $nN$. Fix a level
	$D_{t,t}^{p}$ where $t\in S_{p}$ and define $P_{p,t,l}=\{B\in\BUl:B\subset D_{t,t}^{p}\}$.
	Note that $|P_{p,t,l}|\geq nN$. Then for $m=1,\dots,n$ choose a
	subset $P_{p,t,l,m}\subset P_{p,t,l}$ such that $|P_{p,t,l,m}|=\floorstar{|P_{p,t,l}|/n}.$
	In addition, choose a bijection $\Lambda_{p,t,l,m}:P_{p,t,l,1}\to P_{p,t,l,m}$. Then for any $D\in(\CH^{p})^{(0)}$ there
	is a bisection $D'\in\CD^{p}$ such that $s(D')=D_{t,t}^{p}$ and
	$r(D')=D$. Now define 
	\[
	P_{D,l}=\{r(D'B):B\in P_{p,t,l}\}
	\]
	and $P_{D,l,m}=\{r(D'B):B\in P_{p,t,l,m}\}$ for all $1\leq m\leq n$.
	In addition, define maps $\Theta_{D,l,m}:P_{D,l,1}\to P_{D,l,m}$
	by 
	\[
	\Theta_{D,l,m}(r(D'B))=r(D'\Lambda_{p,t,l,m}(B))
	\]
	for any $B\in P_{p,t,l,1}$ and define $\Theta_{D,l,k,m}=\Theta_{D,l,k}\circ\Theta_{D,l,m}^{-1}$.
	From this construction, for any $p\in J$ and $l\in I$ such that
	$\CB^{l}$ is nested in $\CD^{p}$, we actually have the following
	configuration. 
	\begin{enumerate}[label=(\roman*)]
		\item $P_{D,l}=\{B\in\BUl:B\subset D\}$ has the cardinality $|P_{D,l}|>nN$
		for any $D\in\HUp$. 
		\item There are collections $P_{D,l,m}\subset P_{D,l}$ such that $|P_{D,l,m}|=\floorstar{|P_{D,l}|/n}$
		for any $D\in\HUp$ and $1\leq m\leq n$. 
		\item There are bijective maps $\Theta_{D,l,k,m}:P_{D,l,m}\to P_{D,l,k}$
		for any $D\in\HUp$ and $1\leq m,k\leq n$. For any $1\leq k,m,p\leq n$,
		these functions also satisfy 
		
		\begin{enumerate}
			\item $\Theta_{D,l,m,m}$ is the identity map; 
			\item $\Theta_{D,l,k,m}^{-1}=\Theta_{D,l,m,k}$; 
			\item $\Theta_{D,l,k,m}\Theta_{D,l,m,p}=\Theta_{D,l,k,p}$. 
		\end{enumerate}
		
		. 
		
		\item For any $D\in\CD$ such that $s(D),r(D)\in\HUp$ one has 
		\[
		r(D\Theta_{s(D),l,k,m}(B))=\Theta_{r(D),l,k,m}(r(DB))
		\]
		for any $B\in P_{s(D),l,m}$ and $1\leq k,m\leq n$. 
	\end{enumerate}
	In this case, we call such a collection of all sets $P_{D,l,m}$ together
	with all maps $\Theta_{D,l,k,m}$ for any $p\in J$, $l\in I$ such
	that $\CB^{l}$ is nested in $\CD^{p}$, $D\in\HUp$, $1\leq k,m\leq n$,
	a $\HU$-$\BU$-\textit{nesting system}.
	
	Now for $D\in\HUp$, $l\in I$ such that $\CB^{l}$ is nested in $\CD^{p}$
	and $1\leq k,m\leq n$, we denote by 
	\[
	R_{D,l,k,m}=\{B\in\CB^{l}:s(B)\in P_{D,l,m}\ \text{and}\ r(B)=\Theta_{D,l,k,m}(s(B))\}.
	\]
	For each $p\in J$ write $I_{p}=\{l\in I:\CB^{l}\ \text{is nested in }\CD^{p}\ \text{with multiplicity at least }nN.\}$
	and for each $D\in\HUp$ define 
	\[
	Q_{k,m,D}=\bigsqcup_{l\in I_{p}}R_{D,l,k,m}.
	\]
	In addition, we fix an arbitrary function $\kappa:\HU\to[0,1]$ such that $\kappa(C)=1$ for any $C\in \CUU$. Denote
	by $e_{km}$ the matrix in $M_{n}(\C)$ whose $(k,m)$-entry is $1$
	while other entries are zero. Now we define a map $\psi:M_{n}(\C)\to C_{r}^{*}(\CG)$
	by 
	\[
	\psi(e_{km})=\sum_{D\in\HU}\sum_{B\in Q_{k,m,D}}\kappa(D)1_{B}
	\]
	and is linearly extended to define on the whole $M_{n}(\C)$, which is in fact a c.p.c.\ order
	zero map by \cite[Lemma 9.5]{M-W}.
	
	On the other hand, note that for each $p\in J$ the index set $I_{p}$
	consists exactly all $l\in I$ such that $\CA^{l}$ is nested in $\CC^{p}$
	with multiplicity at least $nN$. Then for any $\CC^{p}$-level $C_{t,t}^{p}$
	and $l\in I_{p}$, there are at most $n-1$ levels $A\in\AUl$ with
	$A\subset C^p_{t,t}$ so that $\psi(1_{n})$ is not supported on. Then choose
	one such level, denoted by $A_{p,l}$, in $(\CA^l)^{(0)}$. Now, for any $\mu\in M(\CG)$,
	first the fact that $\CA^{l}$ is nested in $\CC^{p}$ with multiplicity
	at least $nN$ implies that 
	\[
	\sum_{p\in J}\sum_{l\in I_{p}}nN|T_{p}|\mu(A_{p,l})\leq\mu(\bigcup\AU)\leq1.
	\]
	Then one has 
	\begin{align*}
	\mu(\bigcup\{A\in\CA^{(0)}:\psi(1_{n})\equiv0\ \text{on}\ A\})\leq\sum_{p\in J}\sum_{l\in I_{p}}(n-1)|T_{p}|\mu(A_{p,l})\leq1/N.
	\end{align*}
	and thus 
	\[
	\mu(\bigcup\{A\in\CA^{(0)}:\psi(1_{n})\equiv1\ \text{on}\ A\})\geq1-\epsilon/2-1/N\geq1-\epsilon,\]
	Then for $1\leq m\leq n$, define $g_m=\psi(e_{mm})$ and $g'_m=g_m|_{\bigcup \CUU}=g_m|_{\bigcup \AU}$ in $C(\GU)$. By our construction, one has $\sum_{m=1}^n\mu(g'_m)\geq 1-\epsilon$ for any $\mu\in M(\CG)$.
	In addition, given a collection $M$ consisting of some $\CD$-levels. Then one has $\mu(g_m|_{\bigcup M})=\mu(g_k|_{\bigcup M})$ and $\mu(g'_m|_{\bigcup M})=\mu(g'_k|_{\bigcup M})$ for any $\mu\in M(\CG)$ and $1\leq k, m\leq n$ because the number of $\CB$-levels and $\CA$-levels in any $D\in M$ such that each $g_m$ and $g'_m$ supported on are same. In addition, observe that $g_mg_k=0$ for each $1\leq m\neq k\leq n$ because their supports $\supp(g_m)$ for $1\leq m\leq n$ are disjoint. 
\end{rmk}
Now we prove the main result in this section.
\begin{thm}\label{thm: uniform property Gamma}
Let $\CG$ be a second countable minimal principal groupoid, which is almost finite in measure, then its reduced $C^*$-algebra $C^*_r(\CG)$ has the uniform property $\Gamma$.
\end{thm}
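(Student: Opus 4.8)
The plan is to use the elements $g_m=\psi(e_{mm})$ furnished by the construction of Remark~\ref{rmk: ccp map} as the pairwise orthogonal positive contractions $e_1,\dots,e_n$ required by Definition~\ref{defn:uniform gamma}, deducing almost commutativity from the extendability of the castles and the trace estimate from the equal-distribution properties recorded at the end of Remark~\ref{rmk: ccp map}. First I would fix the standing reductions. As $\CG$ has compact unit space and is second countable, $A=C^*_r(\CG)$ is unital and separable, and since $\CG$ is almost finite in measure we have $M(\CG)\neq\emptyset$, so $T(A)$ is nonempty and compact and Definition~\ref{defn:uniform gamma} applies. Because $\CG$ is principal, the canonical conditional expectation $E$ implements the standard affine bijection $M(\CG)\xrightarrow{\ \cong\ }T(A)$, $\mu\mapsto\mu\circ E$, with inverse $\tau\mapsto\tau|_{C(\GU)}$; in particular every $\tau\in T(A)$ satisfies $\tau=\mu_\tau\circ E$ with $\mu_\tau=\tau|_{C(\GU)}\in M(\CG)$, so I will turn every trace estimate into a measure estimate via $\tau(a)=\mu_\tau(E_0(a))$, where $E_0(a)=a|_{\GU}$. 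Finally, by approximating in $C_c(\CG)$ it suffices to treat $\CF=\{1_{M_0},\dots,1_{M_r}\}$ with $M_0=\GU$, each $M_i$ a compact open bisection of norm at most one; put $K=\bigcup_i M_i$.

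Next I would fix an integer $N>2/\eps$ and invoke Proposition~\ref{prop:nesting} with multiplicity $nN$, the compact set $K$, and an open cover $\CV$ refined finely enough that each $\CD$-level is either contained in some $s(M_i)$ or disjoint from it and that each $E_0(1_{M_i})$ is constant on $\CD$-levels, obtaining castles $\CA,\CB,\CC,\CD$. Feeding these into Remark~\ref{rmk: ccp map} yields a c.p.c.\ order-zero map $\psi:M_n(\C)\to C^*_r(\CG)$ and the elements $g_m=\psi(e_{mm})$. A useful structural point is that each $g_m$ actually lies in $C(\GU)$: since $\Theta_{D,l,m,m}$ is the identity, every $B\in R_{D,l,m,m}$ satisfies $r(B)=s(B)$, so $g_m=\sum_{D}\sum_{B\in Q_{m,m,D}}\kappa(D)1_B$ is a nonnegative function supported on $\GU$. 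The $g_m$ are pairwise orthogonal positive contractions by the order-zero property.

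For almost commutativity I would compute, for a bisection $M_i$, the convolutions $g_m*1_{M_i}(\gamma)=g_m(r(\gamma))1_{M_i}(\gamma)$ and $1_{M_i}*g_m(\gamma)=g_m(s(\gamma))1_{M_i}(\gamma)$, so that $[g_m,1_{M_i}]$ is supported on the bisection $M_i$ and
\[
\|[g_m,1_{M_i}]\|=\sup_{\gamma\in M_i}\bigl|g_m(r(\gamma))-g_m(s(\gamma))\bigr|.
\]
The heart of the matter is that $g_m$ is essentially constant along $\CD$-ladders: conditions (1)--(2) of Proposition~\ref{prop:nesting} force $M_i$ to carry each relevant $\CC$-level onto a $\CD$-level through a $\CD$-ladder, and compatibility (iv) of the nesting system ensures the combinatorial data of $g_m$ at $s(\gamma)$ and $r(\gamma)$ agree, so the two values of $g_m$ differ only through the cutoff $\kappa$. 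Because $\CA$ is nested in $\CC$ and $\CB$ in $\CD$ with multiplicity at least $nN$, the extendability creates at least $N$ buffer levels over which $\kappa$ may be chosen to taper from $1$ to $0$ in increments of size at most $1/N$; a single bisection moves along at most one increment, whence $\|[g_m,1_{M_i}]\|\le 1/N<\eps$. This uniform sup-norm estimate is, as in \cite[Lemma 9.5]{M-W} and the proof of \cite[Theorem 9.7]{M-W}, the main obstacle: unlike the measure-theoretic bounds below it tolerates no exceptional set and must be controlled everywhere on $M_i$, which is precisely why the extra room provided by extendability is indispensable.

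For the trace condition I would exploit that $g_m\in C(\GU)$. For $\tau\in T(A)$ with associated measure $\mu=\mu_\tau$, the $C(\GU)$-bimodule property of $E$ gives $\tau(a g_m)=\mu(E_0(a)\,g_m)$. Since $E_0(a)$ is constant on $\CD$-levels and $g_m|_{\bigcup M}$ has $\mu$-measure independent of $m$ by the equal-distribution identity of Remark~\ref{rmk: ccp map}, we obtain $\tau(a g_m)=\tau(a g_k)$ for all $m,k$. Moreover $\sum_{m=1}^n g_m=\psi(1_n)|_{\GU}$ is a contraction in $C(\GU)$ equal to $1$ on a set of $\mu$-measure at least $1-\eps$, so
\[
\Bigl|\sum_{m=1}^n\tau(a g_m)-\tau(a)\Bigr|=\bigl|\mu\bigl(E_0(a)(\psi(1_n)|_{\GU}-1)\bigr)\bigr|\le\eps.
\]
Combining the two displays, $n\,\tau(a g_m)=\sum_{k}\tau(a g_k)$ lies within $\eps$ of $\tau(a)$, so $\sup_{\tau\in T(A)}|\tau(a g_m)-\tfrac1n\tau(a)|\le\eps/n\le\eps$. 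Since both estimates hold uniformly in $\tau$ with constants depending only on $\eps$ and $n$, taking $\eps$ small at the outset completes the verification of both conditions of Definition~\ref{defn:uniform gamma}.
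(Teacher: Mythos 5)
Your overall strategy coincides with the paper's: reduce traces to invariant measures via the conditional expectation, feed the castles from Proposition \ref{prop:nesting} into the construction of Remark \ref{rmk: ccp map} to obtain $g_m=\psi(e_{mm})\in C(\GU)_+$, derive the trace condition from the equal-distribution identities together with $\sum_m\mu(g'_m)\geq 1-\eps$, and derive almost commutativity from the taper $\kappa$. The trace half of your argument is essentially the paper's and is sound, as is your observation that each $g_m$ lives in $C(\GU)$ and your formula $\|[g_m,f]\|=\sup_{\gamma\in O_f}|f(\gamma)|\,|g_m(r(\gamma))-g_m(s(\gamma))|$ for $f$ supported on a bisection.

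The gap is in the commutator estimate, and it comes from two linked choices. First, your compact set $K=\bigcup_i M_i$ is too small: applied to this $K$, Proposition \ref{prop:nesting} only guarantees that a \emph{single} application of one bisection carries a $\CC$-level onto a $\CD$-level. But the taper is defined on the stratification $\HU=\bigsqcup_{k=0}^{N}\DU_{k}$, where $\DU_{k}$ consists of the $\CD$-levels reachable from $\CUU$ by a word of length $k$ in the $M_i^{\pm1}$, and $\kappa(D)=1-k/N$ on $\DU_k$; for the $\DU_k$ to consist of genuine $\CD$-levels up to $k=N+1$, and for a single bisection to move a level from $\DU_k$ only into some $\DU_{k'}$ with $|k-k'|\leq 1$, the castle $\CD$ must absorb all words of length up to $N+1$. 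This is exactly why the paper takes $K$ to be the union of all such products rather than $\bigcup_i M_i$. Second, you attribute the ``$N$ buffer levels'' to the nesting multiplicity $nN$ of $\CA$ in $\CC$ and of $\CB$ in $\CD$; that multiplicity plays no role in the commutator bound --- it is what forces the at most $n-1$ unused $\CA$-levels per $\CC$-level to contribute at most $1/N\leq\eps/2$ to the trace defect of $\psi(1_n)$. With your $K$ the sets $\DU_k$ for $k\geq2$ need not exist, so the claimed bound $\|[g_m,1_{M_i}]\|\leq 1/N$ is unjustified as written. (A minor further caution: if the reduction of $\CF$ to indicators is carried out by decomposing each $f$ into many indicator pieces, the commutator bound inflates by the number of pieces; keeping $f$ supported on $O_f$ and using your sup formula, as the paper effectively does, avoids this.)
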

\begin{proof}
	Let $\CF$ be a finite subset of $C^*_r(\CG)$ and $\epsilon>0$. It suffices to find positive contractions $g_1, \dots g_n\in C(\GU)_+$ satisfy conditions in Definition \ref{defn:uniform gamma}. Without loss of generality, we may assume all $f\in \CF$ are supported on a compact open bisections $O_f$. In addition, note that for any tracial state $\tau\in T(A)$, since $\CG$ is principal, there is a $\mu_\tau\in M(\CG)$ such that $\tau(a)=\int E(a)\, d\mu_\tau$ for any $a\in A$, which implies that $\tau(g_if)-(1/n)\tau(f)=\tau(g_iE(f))-(1/n)\tau(E(f))$ for any $g_i$ and $f\in \CF$ mentioned above. Then observe there is a clopen partition $\CP=\{P_1, \dots, P_m\}$ of $\GU$ such that each $E(f_i)$ can be approximated with arbitrary small error by $\sum_{j=1}^{m}c^i_j1_{P_j}$ for some suitable scalar $c^i_j\in \C$. Therefore, to show $A$ has uniform property $\Gamma$, it suffices to show for any finite set $\CF$ in $C^*(\CG)$ in which all $f\in \CF$ is supported on a compact open bisection $O_f$, any $\epsilon>0$ and any clopen partition $\CP$ of $\GU$ there are positive contractions  $g_1,\dots, g_k\in C(\GU)_+$ such that $\|[g_i, f]\|<\epsilon$ and $|\mu(g_i1_P)-(1/n)\mu(1_P)|<\epsilon$ for any $g_i, f\in \CF$ and $P\in \CP$.
	
	Then define the clopen cover $\CO^1_f=\{s(O_f), \GU\setminus s(O_f)\}$ and $\CO^2_f=\{r(O_f), \GU\setminus r(O_f)\}$ for any $f\in \CF$ and the compact set \[K=\bigcup_{i=1}^{N+1}\bigcup_{f_1, \dots, f_i\in \CF}\{U_{f_1}\cdot U_{f_2}\cdots U_{f_i}: U_{f_j}=O_{f_j}\ \text{or }O^{-1}_{f_j}, j=1,\dots, i\}\cup \GU\]
	and choose a clopen partition $\CV$ of $\GU$, which is finer than $\CP$ as well as $\CO^1_f$ and $\CO^2_f$ for all $f\in \CF$ such that for any $V\in \CV$ and any $u, v\in V$ and $f\in \CF$ one has 
	\[|s(f)(u)-s(f)(v)|<\epsilon\ \text{and}\ |r(f)(u)-r(f)(v)|<\epsilon.\] 
	Then Proposition \ref{prop:nesting} implies that there are compact open castles $\CA$, $\CB$, $\CC$ and $\CD$ such that 
	\begin{enumerate}
		\item $\CA$ is $K$-extendable to $\CB$ and $\bar{\CC}$ is $K$-extendable
		to $\bar{\CD}$; 
		\item For any $i\leq N+1$, $f_{1},\dots,f_{i}\in F$ and $\CC$-level $C$, either $C\subset s(U_{f_{1}}\cdot U_{f_{2}}\dots\cdot U_{f_{i}})$ or $C\cap s(U_{f_{1}}\cdot U_{f_{2}}\dots\cdot U_{f_{i}})=\emptyset$,  where
		$U_{f_{k}}=O_{f_{k}}$ or $U_{f_{k}}=O_{f_{k}}^{-1}$ for any $1\leq k\leq i$.
		\item For any $i\leq N+1$ and $f_{1},\dots,f_{i}\in F$ if a $\CC$-level
		$C\subset s(U_{f_{1}}\cdot U_{f_{2}}\dots\cdot U_{f_{i}})$ where
		$U_{f_{k}}=O_{f_{k}}$ or $U_{f_{k}}=O_{f_{k}}^{-1}$ for any $1\leq k\leq i$
		then $U_{f_{1}}\cdot U_{f_{2}}\dots\cdot U_{f_{i}}C=D$ for some $D\in\CD$. 
		\item $\CB$ is nested in $\CD$ with multiplicity at least $nN$ 
		\item $\CA$ is nested in $\CC$ with multiplicity at least $nN$; 
		\item any $\CD$-level is contained in a member of $\CV$; 
		\item $\mu(\GU\setminus\bigcup\CA^{(0)})<\epsilon$ for any $\mu\in M(\CG)$. 
	\end{enumerate}
Then we define the following sets consisting of certain $\CD$-levels. First define  Now define $\DU_{0}=\CUU_{N}$
and inductively define 
\begin{align*}
\DU_{k}=\{ & D\in\DU:D=r(U_{f_{k}}\dots U_{f_{1}}C),U_{f_{i}}=O_{f_{i}}\ \text{or}\ O_{f_{i}}^{-1},\ \text{for}\ i=1,\dots,k,\\
& f_{1},\dots,f_{k}\in F\ \text{and}\ C\in\CUU\}\setminus\bigsqcup_{i=0}^{k-1}\DU_{i}
\end{align*}
for $k=1,\dots,N+1$ (some $\DU_{k}$ may be empty). Define $\HU=\bigsqcup_{k=0}^{N}\DU_{k}$,
which is a subset of $\DU$ and contains $\CUU=\DU_{0}$. So far we have finished the basic setting of our construction. Then in the same way of the proof of \cite[Theorem 9.7]{M-W}, using the construction describe in Remark \ref{rmk: ccp map}, one can choose a $\HU$-$\BU$-nesting system and define an order zero c.p.c map  $\psi:M_{n}(\C)\to C_{r}^{*}(\CG)$
by 
\[
\psi(e_{ij})=\sum_{D\in\HU}\sum_{B\in Q_{i,j,D}}\kappa(D)h_{B}
\]
and extending linearly in which 
$\kappa:\HU\to[0,1]$ is defined by $\kappa(D)=1-k/N$ if $D\in\DU_{k}$
for $k=0,\dots,N$. Then for $1\leq m\leq n$ define orthogonal positive contractions $g_m=\psi(e_{mm})\in C(\GU)_+$ as well as $g'_m=g_m|_{\bigcup \CUU}=g_m|_{\bigcup \AU}$ in $C(\GU)$.
In addition, Remark \ref{rmk: ccp map} implies that 
\begin{enumerate}
	\item For any $P\in \CP$, if write $M_P=\{D\in \DU: D\subset P\}$ then $\mu(g_m|_{\bigcup M_P})=\mu(g_k|_{\bigcup M_P})$ and $\mu(g'_m|_{\bigcup M_P})=\mu(g'_k|_{\bigcup M_P})$ for any $\mu\in M(\CG)$ and $1\leq k, m\leq n$.
	\item $\sum_{m=1}^n\mu(g'_m)\geq 1-\epsilon$ for any $\mu\in M(\CG)$.
\end{enumerate}
Therefore,  one first observe that $\mu(g_m1_P)=\mu(g_m|_{\bigcup M_P})$ for each $m\leq n$ and thus \[n\mu(g_m1_P)=\mu((\sum_{m=1}^ng_m)1_P)\leq \mu(1_P)\] for any $P\in \CP$, $1\leq m\leq n$ and $\mu\in M(\CG)$. On the other hand, one also has 
\begin{align*}
	\mu(1_P)-n\mu(g_m1_P)&\leq \mu(1_P)-n\mu(g'_m1_P)\\
	&=\mu(P\setminus\bigcup\{A\in \AU: A\subset C\ \text{for some }C\in M_P\})\\
	&\leq \mu(\GU\setminus \bigcup \AU)<\epsilon
\end{align*}
for any $\mu\in M(\CG)$. These implies that
$|\mu(g_m1_P)-(1/n)\mu(1_P)|<\epsilon/n<\epsilon$ for any $\mu\in M(\CG)$.
Finally, the same argument in \cite[Theorem 9.7]{M-W} for each $g_m=\psi(e_{mm})$ shows that $\|[g_m, f]\|<\epsilon$ for any $1\leq m\leq n$ and $f\in \CF$. This finishes our proof.
\end{proof}
As a direct application,  using \cite[Theorem 5.6]{C-E-T-W}, one has the following results.

\begin{cor}\label{cor: tw conjecture}
Let $\CG$ be a minimal (topologically) amenable second countable ample groupoid which is also almost finite in measure. Then $C^*_r(\CG)$ satisfies the Toms-Winter conjecture. This thus applies to minimal partial dynamical systems in \ref{eg: partial dynamical system}.
\end{cor}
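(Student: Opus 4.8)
The plan is to obtain the Toms--Winter equivalence for $A := C^*_r(\CG)$ by feeding the uniform property $\Gamma$ produced by Theorem \ref{thm: uniform property Gamma} into the machinery of \cite{C-E-T-W}. Because the substantial analytic content --- namely uniform property $\Gamma$, built from extendability and almost elementariness in measure --- is already delivered by Theorem \ref{thm: uniform property Gamma}, the remaining task is essentially to confirm that $A$ lies in the class to which \cite[Theorem 5.6]{C-E-T-W} applies, and then to close the standard loop of implications defining the conjecture. Throughout I take $\CG$ to be principal, as in Theorem B; this is what both Theorem \ref{thm: uniform property Gamma} and the simplicity argument below require, and it holds for the free partial systems of Example \ref{eg: partial dynamical system}.

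First I would record the structural properties of $A$. Since $\GU$ is compact, $A$ is unital; since $\CG$ is second countable, $A$ is separable; and since $\CG$ is (topologically) amenable, the canonical map $C^*(\CG)\to C^*_r(\CG)$ is an isomorphism, so $A$ is nuclear. Principality gives $\opIso(\CG)=\GU$, hence $\CG$ is effective, and minimality together with effectiveness (and amenability) yields that $A$ is simple. Moreover $A$ is infinite-dimensional, as $A\supseteq C(\GU)$ and minimality together with the existence of a F{\o}lner sequence (the F{\o}lner sets may be taken arbitrarily large by Lemma \ref{lem: Folner length arbitrary long}) forces $\GU$ to be infinite. To obtain traces I would use that almost finiteness in measure implies fiberwise amenability by Proposition \ref{thm: ae in measure equal af in measure}; the dichotomy of Theorem \ref{thm: dichotomy} then rules out paradoxicality and forces $M(\CG)\neq\emptyset$. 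As in the proof of Theorem \ref{thm: uniform property Gamma}, each $\mu\in M(\CG)$ induces a tracial state $\tau_\mu=\int E(\cdot)\,d\mu$ on $A$, and by principality every trace arises this way; since $A$ is simple these traces are faithful, so $A$ is stably finite with nonempty, compact tracial state space $T(A)$.

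With these hypotheses assembled, Theorem \ref{thm: uniform property Gamma} supplies the uniform property $\Gamma$ for $A$, and I would invoke \cite[Theorem 5.6]{C-E-T-W}: for a simple separable unital nuclear $C^*$-algebra with nonempty compact trace space and uniform property $\Gamma$, strict comparison and $\CZ$-stability are equivalent. This is precisely the implication that is open in general; combined with the classical implications between finite nuclear dimension, $\CZ$-stability, and strict comparison for simple separable unital nuclear $C^*$-algebras, it renders all three properties equivalent for $A$, i.e.\ $C^*_r(\CG)$ satisfies the Toms--Winter conjecture. The concluding assertion about Example \ref{eg: partial dynamical system} then follows, since those groupoids were shown there to be minimal, principal, ample and almost finite in measure, while topological amenability is inherited from the amenable acting group $\Gamma$.

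I do not anticipate a genuine obstacle, as the decisive step is already isolated in Theorem \ref{thm: uniform property Gamma}. The only points needing care are the bookkeeping verifications above: that principality simultaneously yields simplicity of $A$ and the identification of $T(A)$ with $M(\CG)$, and that $M(\CG)$ is nonempty (via fiberwise amenability and the dichotomy) so that $T(A)$ is a nonempty compact simplex. If anything is a soft spot, it is matching the exact hypotheses of \cite[Theorem 5.6]{C-E-T-W} --- principally the nonempty compact trace space --- which the above is designed to guarantee.
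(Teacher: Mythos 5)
Your proposal is correct and follows essentially the same route as the paper, which simply combines Theorem \ref{thm: uniform property Gamma} with \cite[Theorem 5.6]{C-E-T-W}; the paper's own proof is a one-line citation, while you additionally spell out the verification that $C^*_r(\CG)$ is simple, separable, unital, nuclear with nonempty compact trace space (and correctly note that principality, present in Theorem B but omitted from the corollary's wording, is needed to invoke Theorem \ref{thm: uniform property Gamma}).
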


\section{Soficity of topological full groups}
In this section, we use our fiberwise amenability to study topological full groups of groupoids. Our aim is to establish Theorem C and Corollary D. We recall the definition of topological full groups first. Denote by $\aut(\GU)$ the set of all homeomorphism from $\GU$ to itself. Let $B$ be a compact open bisection in $\CG$ and write $\pi_B=r\circ (s|_B)^{-1}$ for the natural homeomorphism from $s(B)$ to $r(B)$.

\begin{defn}
	Let $\CG$ be agroupoid. We denote by $[[\CG]]$ the subgroup \[\{\pi_B\in \aut(\GU): \text{B is a compact open bisection such that }s(B)=r(B)=\GU\}\] 
	of $\aut(\GU)$ the topological full group of $\CG$.
\end{defn}

The definition of sofic groups is originally due to Weiss and Gromov. All amenable discrete groups and residually finite groups are sofic and it is still unknown whether there is a non-sofic group. 
Denote by $\map(A)$ for a finite set $A$ all maps from $A$ to itself and define the \emph{Hamming distance} on $\map(A)$ by $d_H(f, g)=|\{x\in A: f(x)\neq g(x)\}|/|A|$. We record the following definition of sofic groups appeared in \cite{El}
\begin{defn}[\cite{El}]
	A group $\Gamma$ is said to be sofic if for any finite $F\subset \Gamma$ and $\epsilon>0$ there exists a finite set $A$ and a mapping $\Theta: \Gamma\to \map(A)$ such that 
	\begin{enumerate}
		\item If $f, g, fg\in F$ then $d_H(\Theta(fg), \Theta(f)\Theta(g))\leq \epsilon$.
		\item If $ \id_A\neq f\in F$ then $d_H(\Theta(f), \id_A)>1-\epsilon$.
		\item $\Theta(\id_A)=\id_A$.
	\end{enumerate}
\end{defn}

Motivated by the fact that countable discrete amenable groups are sofic, it is reasonable to conjecture that $[[\CG]]$ is sofic when $\CG$ is fiberwise amenable and second countable.  We confirmed this in Theorem \ref{thm: sofic} when $\CG$ is minimal and admits a F{\o}lner sequence. To establish this, one needs to apply \emph{compressed sofic representation} proved in \cite[Lemma 2.1]{El} because $[[\CG]]$ has many fixed units in $\GU$.  The following was proved by Elek in \cite[Lemma 2.1]{El}.

\begin{prop}[\cite{El}]\label{prop: compressed sofic representation}
	Let $\{e_\Gamma=\gamma_0, \gamma_1,\dots, \}$ be an enumeration of a countable discrete group $\Gamma$. Suppose for any $i\geq 1$ there is a constant $\epsilon_i>0$ and for any $n\geq 1$ there is an map $\Theta_n: \Gamma\to \map(A_n)$ for some finite set $A_n$ with $\Theta_n(e_\Gamma)=\id_{A_n}$ and satisfying the condition that for all $r>0$ and $\epsilon>0$ there exists $K_{r, \epsilon}>0$ such that  if $n>K_{r, \epsilon}$ one always has
	\begin{enumerate}
	\item $d_H(\Theta_n(\gamma_i\gamma_j), \Theta_n(\gamma_i)\Theta_n(\gamma_j)<\epsilon$ if $1\leq i, j\leq r$.
	\item $d_H(\Theta_n(\gamma_i), \id_{A_n})>\epsilon_i$ if $1\leq i\leq r$.
	\end{enumerate}
Then $\Gamma$ is sofic.
\end{prop}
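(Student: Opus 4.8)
The plan is to prove the stated criterion by an \emph{amplification} (diagonal tensor-power) argument: one upgrades the weak injectivity bound $\epsilon_i$ of hypothesis (2) into a genuine sofic injectivity bound arbitrarily close to $1$, while keeping the almost-multiplicativity of hypothesis (1) under control. Fix a finite subset $F\subset\Gamma$ and a tolerance $\epsilon'>0$. Since $\{e_\Gamma=\gamma_0,\gamma_1,\dots\}$ enumerates $\Gamma$ and $F$ is finite, we have $F\subset\{\gamma_0,\dots,\gamma_r\}$ for some $r$, and the goal is to produce a finite set $B$ and a map $\Psi:\Gamma\to\map(B)$ with $\Psi(\id)=\id_B$, with $d_H(\Psi(fg),\Psi(f)\Psi(g))\le\epsilon'$ for $f,g,fg\in F$, and with $d_H(\Psi(\gamma),\id_B)>1-\epsilon'$ for every $\id\ne\gamma\in F$. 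Set $\delta=\min_{1\le i\le r}\epsilon_i>0$.

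The key construction is the diagonal $k$-th power. For $n,k\in\N$ define $\Psi_{n,k}:\Gamma\to\map(A_n^k)$ by $\Psi_{n,k}(\gamma)(a_1,\dots,a_k)=(\Theta_n(\gamma)a_1,\dots,\Theta_n(\gamma)a_k)$, which I abbreviate as $\Theta_n(\gamma)^{\otimes k}$; note $\Psi_{n,k}(\gamma)\Psi_{n,k}(\eta)=(\Theta_n(\gamma)\Theta_n(\eta))^{\otimes k}$ and $\Psi_{n,k}(\id)=\id_{A_n^k}$. Two features drive the argument. First, a tuple is fixed by $\Psi_{n,k}(\gamma)$ exactly when each coordinate is fixed by $\Theta_n(\gamma)$, so the fixed-point fraction is the $k$-th power of that of $\Theta_n(\gamma)$; hence $d_H(\Psi_{n,k}(\gamma),\id)=1-(1-d_H(\Theta_n(\gamma),\id))^k$, which exceeds $1-(1-\epsilon_i)^k$ once $d_H(\Theta_n(\gamma),\id)>\epsilon_i$. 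Second, the pair $\Psi_{n,k}(\gamma\eta),\Psi_{n,k}(\gamma)\Psi_{n,k}(\eta)$ agrees on a tuple iff it agrees coordinatewise, so $d_H(\Psi_{n,k}(\gamma\eta),\Psi_{n,k}(\gamma)\Psi_{n,k}(\eta))\le 1-(1-\epsilon)^k\le k\epsilon$ whenever the corresponding factor defect is at most $\epsilon$.

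The parameters must be chosen in a specific order, and this ordering is the heart of the matter. First choose $k$ depending only on $\delta$ and $\epsilon'$ so that $(1-\delta)^k<\epsilon'$; since $\epsilon_i\ge\delta$ this forces $(1-\epsilon_i)^k<\epsilon'$ for every $1\le i\le r$, amplifying injectivity past $1-\epsilon'$. With $k$ now fixed, choose $\epsilon>0$ with $k\epsilon<\epsilon'$ to control the multiplicative defect. Finally invoke the hypothesis with these $r$ and $\epsilon$ to obtain $K_{r,\epsilon}$, pick any $n>K_{r,\epsilon}$, and set $B=A_n^k$, $\Psi=\Psi_{n,k}$. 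Then hypothesis (1) gives $d_H(\Psi(\gamma_i\gamma_j),\Psi(\gamma_i)\Psi(\gamma_j))\le k\epsilon<\epsilon'$ for $1\le i,j\le r$, covering all $f,g,fg\in F$; hypothesis (2) gives $d_H(\Psi(\gamma_i),\id_B)>1-(1-\epsilon_i)^k>1-\epsilon'$ for each nontrivial $\gamma_i\in F$; and $\Psi(\id)=\id_B$ by construction. As $F$ and $\epsilon'$ were arbitrary, $\Gamma$ is sofic.

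The anticipated main obstacle is precisely the tension between the two amplification effects: tensoring boosts injectivity but simultaneously inflates the multiplicative defect by a factor of up to $k$. The resolution is the quantifier structure of the hypothesis, namely that $K_{r,\epsilon}$ may be selected \emph{after} $\epsilon$, which permits fixing $k$ from the constants $\epsilon_i$ and $\epsilon'$ first and only then demanding $\epsilon$ small relative to $k$. I would still verify carefully the two elementary inequalities used (monotonicity of $t\mapsto 1-(1-t)^k$ and the Bernoulli bound $(1-\epsilon)^k\ge 1-k\epsilon$), and confirm that no bijectivity of the maps $\Theta_n(\gamma)$ is ever invoked — only the fixed-point count and coordinatewise agreement — so the construction is valid for arbitrary self-maps of $A_n$.
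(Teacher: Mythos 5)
Your amplification argument is correct: the diagonal $k$-th power construction, the identity $d_H(f^{\otimes k},g^{\otimes k})=1-(1-d_H(f,g))^k$ applied both to fixed-point counts and to multiplicative defects, and the crucial ordering of parameters ($k$ from $\min_i\epsilon_i$ and $\epsilon'$ first, then $\epsilon$ with $k\epsilon<\epsilon'$, then $n>K_{r,\epsilon}$) together give exactly the required sofic approximation, and you correctly observe that no bijectivity of the $\Theta_n(\gamma)$ is needed. The paper itself gives no proof of this proposition — it is quoted from Elek's Lemma 2.1 in \cite{El} — and your argument is the standard tensor-power amplification that underlies that cited result, so there is nothing to compare beyond noting agreement.
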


 We begin with the following lemma, which yields finite sets $A_n$ satisfying Proposition \ref{prop: compressed sofic representation}.

\begin{lem}\label{lem: folner enough}
	Let $\CG$ be a groupoid. Let $F=\{\pi_{V_1}, \dots, \pi_{V_n}\}$ be a finite subset of $[[\CG]]$ and $K=(\bigcup_{1\leq i, j\leq n} V_iV_j)\cup (\bigcup_{1\leq i\leq n}V_i)\cup \GU$. Let $S\subset \CG_u$ be a $(K, \epsilon)$-F{\o}lner set such that $r|_S$ is injective. Write $P=r(S)$. Then $|\bigcap_{g\in F\cup F^2}g^{-1}(P)\cap P|\geq (1-\epsilon)|P|$.
\end{lem}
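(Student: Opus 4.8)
The plan is to count the points of $P$ that are pushed outside of $P$ by some element of $F\cup F^2$, and to inject this ``bad'' set into the inner boundary $\partial_K^- S$, whose size is directly controlled by the Følner hypothesis. Before the main argument I would record the elementary bookkeeping that lets me replace group elements by bisections sitting inside $K$. Since each $\pi_{V_i}\in[[\CG]]$, every $V_i$ is a compact open bisection with $s(V_i)=r(V_i)=\GU$; hence each product $V_iV_j$ is again a compact open bisection with $s(V_iV_j)=r(V_iV_j)=\GU$ and $\pi_{V_i}\pi_{V_j}=\pi_{V_iV_j}$. Consequently every $g\in F\cup F^2$ equals $\pi_B$ for some bisection $B\in\{V_i\}\cup\{V_iV_j\}$, and by the very definition of $K$ one has $B\subset K$ with $s(B)=\GU$. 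In particular, for any $v\in\GU$ there is a unique $w=w_g(v)\in B\subset K$ with $s(w)=v$, and then $g(v)=r(w)$.

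The key step is then a short fiberwise computation. I would fix $v\in P$ and let $\gamma_v$ be the unique element of $S$ with $r(\gamma_v)=v$, which exists and is unique precisely because $r|_S$ is injective; note $s(\gamma_v)=u$. Suppose $g(v)\notin P$ for some $g\in F\cup F^2$, and write $g=\pi_B$ with $w=w_g(v)$ as above. Since $s(w)=v=r(\gamma_v)$, the product $w\gamma_v$ is defined, lies in $KS$, and satisfies $r(w\gamma_v)=r(w)=g(v)\notin r(S)=P$; therefore $w\gamma_v\notin S$. As $w\in K$, this exhibits $\gamma_v$ as an element of $\partial_K^- S=\{x\in S:\ yx\in\CG\setminus S\text{ for some }y\in K\}$. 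Thus, writing $\mathrm{Bad}=P\setminus\bigcap_{g\in F\cup F^2}g^{-1}(P)$, the inverse $v\mapsto\gamma_v$ of the bijection $r\colon S\to P$ carries $\mathrm{Bad}$ injectively into $\partial_K^- S$.

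To finish I would simply chain the estimates: $|\mathrm{Bad}|\le|\partial_K^- S|\le|\partial_K S|\le\epsilon|S|=\epsilon|P|$, the last equality again using injectivity of $r|_S$, and hence $\big|\bigcap_{g\in F\cup F^2}g^{-1}(P)\cap P\big|=|P|-|\mathrm{Bad}|\ge(1-\epsilon)|P|$. The only genuinely delicate points are the composability bookkeeping, namely verifying $\pi_{V_i}\pi_{V_j}=\pi_{V_iV_j}$ with $V_iV_j\subset K$ and checking that the witness $w\gamma_v$ really lies in $KS\setminus S$, together with the observation that injectivity of $r|_S$ is exactly what transports the count from $\GU$ back to the source fiber $\CG_u$, where the Følner estimate $|\partial_K S|\le\epsilon|S|$ actually lives. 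Everything else is routine, so I expect no serious obstacle beyond keeping the source/range conventions straight.
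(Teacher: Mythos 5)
Your proof is correct and is essentially the paper's argument read in the contrapositive: the paper works with the ``good'' set $M=S\setminus\partial_K^-S$ and shows $r(M)$ lands in $\bigcap_{g\in F\cup F^2}g^{-1}(P)\cap P$, whereas you inject the ``bad'' set $P\setminus\bigcap_g g^{-1}(P)$ back into $\partial_K^-S$ via $(r|_S)^{-1}$; both hinge on the same two facts, namely that every $g\in F\cup F^2$ is $\pi_B$ for a bisection $B\subset K$ with full source, and that the F{\o}lner hypothesis bounds $|\partial_K^-S|$ by $\epsilon|S|=\epsilon|P|$. Your write-up just makes explicit the bisection bookkeeping ($\pi_{V_i}\pi_{V_j}=\pi_{V_iV_j}$ with $V_iV_j\subset K$) that the paper leaves implicit.
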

\begin{proof}
	Let $M=\{\gamma\in S: K\gamma\subset S\}=S\setminus \partial^-_KS$, which satisfies $|M|\geq (1-\epsilon)|S|$ because $S$ is $(K, \epsilon)$-F{\o}lner. Then observe that $r(M)\subset \{u\in P: (F^2\cup F)u\subset P\}=\bigcap_{g\in F\cup F^2}g^{-1}(P)\cap P$. Thus one has 
	\[|\bigcap_{g\in F\cup F^2}g^{-1}(P)\cap P|\geq |r(M)|=|M|\geq (1-\epsilon)|S|=(1-\epsilon)|P|.\] 
\end{proof}

Let $g\in [[\CG]]$. We denote by $\supp(g)$ the clopen set $\{u\in \GU: g(u)\neq u\}$.  Now we are ready to show the following theorem.

\begin{thm}\label{thm: sofic}
Let $\CG$ be a second countable minimal groupoid that admits a F{\o}lner sequence. Then $[[\CG]]$ is sofic.
\end{thm}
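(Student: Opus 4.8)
The plan is to establish soficity not directly but through Elek's compressed sofic representation, Proposition \ref{prop: compressed sofic representation}. First observe that $\Gamma := [[\CG]]$ is countable, since a second countable ample groupoid has only countably many compact open bisections; fix an enumeration $\{e = \gamma_0, \gamma_1, \dots\}$. I would then fix once and for all a single unit $u \in \GU$ and, for each $n$, set $A_n := r(S_n u)$, a finite subset of $\GU$ with $|A_n| = |S_n u|$ because $r|_{S_n u}$ is injective for the normal F\o lner set $S_n$. Define $\Theta_n : \Gamma \to \map(A_n)$ by $\Theta_n(g)(p) = g(p)$ when $g(p) \in A_n$ and $\Theta_n(g)(p) = p$ otherwise; then $\Theta_n(e) = \id_{A_n}$. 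It remains to verify the two Hamming estimates required by Proposition \ref{prop: compressed sofic representation}.

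For the approximate-multiplicativity condition, fix $r$ and put $F = \{\gamma_1, \dots, \gamma_r\}$ with associated compact set $K$ as in Lemma \ref{lem: folner enough}. Since $\CS$ is a F\o lner sequence, $S_n u$ is $(K,\epsilon)$-F\o lner for all large $n$ (the pointwise property at the fixed $u$ suffices, so no uniformity across fibers is needed), and Lemma \ref{lem: folner enough} supplies a subset $G_n \subseteq A_n$ with $|G_n| \geq (1-\epsilon)|A_n|$ on which $w(p) \in A_n$ for every $w \in F \cup F^2$. On $G_n$ one checks directly that $\Theta_n(\gamma_i\gamma_j)$ and $\Theta_n(\gamma_i)\Theta_n(\gamma_j)$ coincide, since $\gamma_j(p)$ and $\gamma_i\gamma_j(p)$ both land in $A_n$; hence $d_H(\Theta_n(\gamma_i\gamma_j), \Theta_n(\gamma_i)\Theta_n(\gamma_j)) \leq \epsilon$. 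This step is essentially bookkeeping once Lemma \ref{lem: folner enough} is available.

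The substance lies in the second condition: for each nontrivial $\gamma_i$ I must produce a fixed constant $\epsilon_i > 0$ with $d_H(\Theta_n(\gamma_i), \id_{A_n}) > \epsilon_i$ for all large $n$. As $\gamma_i \neq e$, its support is a nonempty open set and hence contains a nonempty clopen set $A \subseteq \supp(\gamma_i)$, and the key estimate is $\liminf_n |A \cap A_n|/|A_n| > 0$. I would prove this from minimality: every orbit is dense, so $A$ is uniformly reachable in the sense that there is a compact $K$ with $\GU \subseteq K$ and $A \cap r(Kv) \neq \emptyset$ for all $v \in \GU$, obtained by covering $\GU$ with finitely many source-projections of compact open bisections whose range lies in $A$. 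Pushing each $\eta \in S_n u$ forward by a bisection element of $K$ carrying $r(\eta)$ into $A$ yields a map $S_n u \to A \cap r(KS_n u)$ whose fibers have cardinality at most $\|K^{-1}\|$ (a preimage count governed by Remark \ref{5.2}), so $|A \cap r(KS_n u)| \geq |S_n u|/\|K^{-1}\|$. Comparing $r(KS_n u)$ with $r(S_n u)$ through the F\o lner property at $u$ then gives $|A \cap A_n|/|A_n| \geq 1/\|K^{-1}\| - \delta_n$ with $\delta_n \to 0$, whence the liminf is at least $1/\|K^{-1}\| =: 2\epsilon_i > 0$. Combining this with the ``most points stay in $A_n$'' estimate from Lemma \ref{lem: folner enough} shows that a proportion exceeding $\epsilon_i$ of $p \in A_n$ satisfy $\gamma_i(p) \in A_n$ and $\gamma_i(p) \neq p$, i.e. $\Theta_n(\gamma_i)(p) \neq p$. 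I expect this positive-lower-density step, namely extracting a uniform $n$-independent proportion of moved points purely from minimality along a single source fiber, to be the main obstacle, with the bounded-multiplicity counting and its interaction with the F\o lner boundary requiring the most care. Once both conditions hold, Proposition \ref{prop: compressed sofic representation} yields that $[[\CG]]$ is sofic.
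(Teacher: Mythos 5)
Your proposal is correct and follows the same skeleton as the paper's proof -- Elek's compressed sofic representation (Proposition \ref{prop: compressed sofic representation}), finite model sets of the form $r(S_nu)$ cut out of a source fiber by the normal F{\o}lner sets, and Lemma \ref{lem: folner enough} for approximate multiplicativity -- but it diverges genuinely at the one step that carries the weight, namely the uniform lower bound on $|\supp(\gamma_i)\cap r(S_nu)|/|S_nu|$. The paper gets this bound by routing through the Banach-density machinery of Section 4: it invokes Lemma \ref{lem: density less measure} to compare $\barbelow{D}^-_\CS(A_i)$ with $\inf_{\mu\in M(\CG)}\mu(A_i)$, uses minimality to make that infimum positive, extracts a diagonal subsequence $\{T_n\}$ so that the liminf becomes a limit simultaneously for all $i$, and then exploits the infimum over units in $D^-_{T_n}$ so that the base points $u_n$ may be chosen arbitrarily (and differently) for each $n$. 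You instead fix a single unit $u$ once and for all and prove the positive density directly: minimality gives a compact $K$, a finite union of compact open bisections with ranges in $A\subseteq\supp(\gamma_i)$, whose source projections cover $\GU$; pushing $S_nu$ into $A$ along $K$ with fibers of size at most $\|K^{-1}\|$ gives $|A\cap r(KS_nu)|\geq |S_nu|/\|K^{-1}\|$, and the F{\o}lner property at $u$ transfers this to $r(S_nu)$ up to an error $\delta_n\to 0$. I checked the fiber count ($\Phi(\eta)=a$ forces $r(\eta)\in r(K^{-1}a)$, and $r|_{S_nu}$ is injective by normality) and the comparison $A\cap r(KS_nu)\subseteq (A\cap r(S_nu))\cup r(KS_nu\setminus S_nu)$; both are sound. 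Your route buys a more elementary and self-contained argument -- no invariant measures, no weak* cluster points, no diagonal subsequence, and a single fixed fiber -- at the cost of an explicit but $i$-dependent constant $1/\|K^{-1}\|$ in place of the measure-theoretic constant $3\epsilon_i$; the paper's route is less direct but keeps the proof aligned with its density framework and yields the sharper interpretation of the constants as infima of invariant measures. One cosmetic difference: your $\Theta_n(g)$ is merely a map on $A_n$ rather than the permutation the paper builds by adjoining an arbitrary bijection between $P_n\setminus\varphi^{-1}(P_n)$ and $P_n\setminus\varphi(P_n)$; since Proposition \ref{prop: compressed sofic representation} is stated for $\map(A_n)$, this is harmless.
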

\begin{proof}
	First, let $\CS=\{S_n: n\in \N\}$ be a F{\o}lner sequence. Then for any non-empty compact open set $A$ in $\GU$, one has $\barbelow{D}^-_\CS(A)\geq \inf_{\mu\in M(\CG)}\mu(A)$ by Lemma \ref{lem: density less measure}. We still write \[D^-_{S_n}(A)=\inf_{u\in \GU}(1/|S_nu|)\sum_{\gamma\in S_nu}1_A(r(\gamma))\]
	and thus $\barbelow{D}^-_\CS(A)=\liminf_{n\to \infty}D^-_{S_n}(A)$ by definition.
	Now, enumerate $[[\CG]]$ by $\{\gamma_0=\id_{\GU}, \gamma_1, \gamma_2, \dots\}$. Since $\CG$ is minimal, writing $A_i=\supp(\gamma_i)$ for any $i\in \N^+$, there is an $\epsilon_i>0$ such that $\inf_{\mu\in M(\CG)}\mu(A_i)\geq 3\epsilon_i$.
	
	For $A_1$, choose a subsequence $\{S^1_n: n\in \N\}=\{S_{k_n}: n\in \N\}$ of $\{S_n: n\in\N\}$ such that $\barbelow{D}^-_\CS(A_1)=\lim_{n\to \infty}D^-_{S^1_n}(A_1)$. Now suppose one has defined from $1$ until $k$ such that $\{S^{i+1}_n: n\in \N\}$ is a subsequence of $\{S^{i}_n: n\in \N\}$ for all $1\leq i\leq k-1$ and satisfying
	\[\lim_{n\to \infty}D^-_{S^i_n}(A_i)\geq \barbelow{D}^-_\CS(A_i)\]
	 for all $1\leq 1\leq k$. Then for $k+1$, observe that $\liminf_{n\to \infty}D^-_{S^k_n}(A_{k+1})\geq \barbelow{D}^-_\CS(A_{k+1})$ because $\{S^k_{n}: n\in\N\}$ is still a subsequence of the original F{\o}lner sequence $\CS=\{S_n: n\in \N\}$. Then choose a subsequence $\{S^{k+1}_n: n\in \N\}$ of $\{S^k_n: n\in \N\}$ such that 
	 \[\lim_{n\to \infty}D^-_{S^{k+1}_n}(A_{k+1})=\liminf_{n\to \infty}D^-_{S^k_n}(A_{k+1})\geq \barbelow{D}^-_\CS(A_{k+1}).\]
	This finishes our construction by induction. Now we pick up the diagonal sequence $\{T_n=S^n_n: n\in \N\}$, which satisfies that
	\begin{enumerate}
		\item $\{T_n: n\in \N\}$ is still a F{\o}lner sequence and $\{T_n: n\in \N\}\subset \CS$.
		\item $\lim_{n\to \infty}D^-_{T_n}(A_i)\geq \barbelow{D}^-_\CS(A_i)\geq \inf_{\mu\in M(\CG)}\mu(A_i)\geq 3\epsilon_i$ for all  $i\geq 1$.
	\end{enumerate}
For each $n\in \N$, we fix a $u_n\in \GU$ and define $P_n=r(T_nu_n)$. Note that $r|_{T_nu_n}$ is injective. Then define $\theta_n: [[\CG]]\to \sym(P_n)$ as follows. For $\varphi\in [[\CG]]$,  define a map $\sigma_{\varphi, n}$  from $\varphi^{-1}(P_n)\cap P_n$ to $P_n\cap \varphi(P_n)$ by $\sigma_{\varphi, n}(u)=\varphi(u)$, which is well-defined and bijective. In addition, we fix another arbitrary bijective map $\rho_{\varphi, n}: P_n\setminus \varphi^{-1}(P_n)\to P_n\setminus \varphi(P_n)$.  Finally, we announce that our $\theta_n(\varphi)$ is defined to be the combination map of  $\sigma_{\varphi, n}$ and $\rho_{\varphi, n}$, which belongs to $\sym(P_n)$. Note that $\theta_n(\id_{\GU})=\id_{P_n}$ by our definition. We then verify that our all $\theta_n$ satisfy the assumptions of Proposition \ref{prop: compressed sofic representation}.

Let $r\in \N, \epsilon>0$. First, choose a $0<\delta<\min\{\epsilon, \epsilon_i: 1\leq i\leq r\}$. Then there is a big enough number $K_{r, \epsilon}>0$ such that whenever $n>K_{r, \epsilon}$, the set $T_nu_n$ is F{\o}lner enough with respect to $F=\{\gamma_0, \dots, \gamma_r\}$ and $\delta$ such that
$|\bigcap_{g\in F\cup F^2}g^{-1}(P_n)\cap P_n|\geq (1-\delta)|P_n|$
by Lemma \ref{lem: folner enough} and
\[D^-_{T_n}(A_i)=\inf_{u\in \GU} (|A_i\cap r(T_nu)|/|T_nu|)\geq\barbelow{D}^-_\CS(A_i)-\delta>3\epsilon_i-\delta>2\epsilon_i\]
for all $i\leq r$.

Now, let $1\leq i\leq r$, which means $\gamma_i\in F\setminus \{\id_{\GU}\}$. Then the inequality above implies that $|A_i\cap P_n|/|P_n|\geq D^-_{T_n}(A_i)>2\epsilon_i$ because $|T_nu_n|=|P_n|$.  Observe that whenever $u\in \gamma_i^{-1}(P_n)\cap P_n\cap A_i$, one has $\theta_n(\gamma_i)(u)=\gamma_i(u)\neq u$ because $u\in A_i=\supp(\gamma_i)$. On the other hand, note that \[|\gamma_i^{-1}(P_n)\cap P_n\cap A_i|> 2\epsilon_i|P_n|-\delta|P_n|\geq\epsilon_i|P_n|,\]
which implies that $|\{u\in P_n: \theta_n(\gamma_i)(u)\neq u\}|>\epsilon_i|P_n|$.

Finally, let $0\leq i, j\leq r$, we write $\varphi=\gamma_i\in F$ and $\psi=\gamma_j\in F$ for simplicity. Note that for any $u\in \varphi^{-1}(P_n)\cap \psi^{-1}(P_n)\cap \psi^{-1}\varphi^{-1}(P_n)\cap P_n$, one always has $\theta_n(\varphi\psi)(u)=\varphi(\psi(u))$ by definition as well as $\theta_n(\psi)(u)=\psi(u)\in P_n\cap \varphi^{-1}(P_n)$, which implies  $\theta_n(\varphi)(\theta_n(\psi)(u))=\varphi(\psi(u))$. This implies that 
\[\varphi^{-1}(P_n)\cap \psi^{-1}(P_n)\cap \psi^{-1}\varphi^{-1}(P_n)\cap P_n\subset \{u\in P_n: \theta_n(\varphi)\theta_n(\psi)(u)=\theta(\varphi\psi)(u)\}\]
and thus $|\{u\in P_n: \theta_n(\varphi)\theta_n(\psi)(u)=\theta_n(\varphi\psi)(u)\}|>(1-\delta)|P_n|\geq (1-\epsilon)|P_n|$.

Thus Proposition \ref{prop: compressed sofic representation} shows that $[[\CG]]$ is sofic.
\end{proof}

We have the following natural corollaries.

\begin{cor}\label{cor 1}
Let $\CG$ be a second countable minimal fiberwise amenable groupoid. Suppose there is a $u\in \GU$ such that $\CG^u_u=\{u\}$. Then $[[\CG]]$ is sofic. In particular,  if $\CG$ is topological principal then the alternating group $A(\CG)$ introduced in \cite{Ne} is simple and sofic. 
\end{cor}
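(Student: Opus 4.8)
The plan is to obtain this corollary directly from Theorem \ref{thm: sofic}, together with the existence results for F{\o}lner sequences proved earlier, and then to treat the alternating subgroup separately. The essential work is only to check that the hypotheses of Theorem \ref{thm: sofic} are met; once this is done the soficity of $[[\CG]]$ is immediate, and the statement about $A(\CG)$ reduces to a combination of inheritance of soficity and Nekrashevych's simplicity theorem.

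First I would verify that $\CG$ admits a F{\o}lner sequence. Since $\CG$ is minimal, fiberwise amenable, and contains a unit $u\in\GU$ with $\CG^u_u=\{u\}$, Proposition \ref{prop: minimal groupoid normal folner} guarantees that for every compact set $K\subset\CG$ and every $\epsilon>0$ there is a normal $(K,\epsilon)$-F{\o}lner set in $\CG$. Feeding this into Proposition \ref{prop: exist folner sequence} at once produces a F{\o}lner sequence $\{S_n:n\in\N\}$ for $\CG$. With such a sequence in hand, and because $\CG$ is by our standing assumption a second countable minimal ample groupoid with compact unit space, Theorem \ref{thm: sofic} applies verbatim and shows that $[[\CG]]$ is sofic. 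This settles the first assertion.

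For the alternating group I would invoke two facts. First, $A(\CG)$ is by its very construction a subgroup of $[[\CG]]$, and since soficity is inherited by subgroups, the soficity of $[[\CG]]$ forces $A(\CG)$ to be sofic as well. Second, when $\CG$ is additionally topologically principal, the simplicity of $A(\CG)$ is precisely Nekrashevych's theorem in \cite{Ne}, which applies to minimal topologically principal ample groupoids (our standing second countability and ampleness being part of that setting). Combining the two conclusions yields that $A(\CG)$ is simple and sofic.

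I do not anticipate a genuine obstacle: all the heavy machinery is already assembled in Theorem \ref{thm: sofic} and in the simplicity result of \cite{Ne}. The only point warranting a moment's care is the passage from $[[\CG]]$ to the subgroup $A(\CG)$, which rests on the standard hereditary property of soficity under taking subgroups; once that is noted, the proof is a short matter of citing the relevant propositions in the correct order.
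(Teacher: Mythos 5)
Your proposal matches the paper's own proof, which is exactly the chain Proposition \ref{prop: minimal groupoid normal folner} $\Rightarrow$ Proposition \ref{prop: exist folner sequence} $\Rightarrow$ Theorem \ref{thm: sofic}, followed by Nekrashevych's simplicity theorem from \cite{Ne}. Your additional remark that soficity passes to the subgroup $A(\CG)\leq[[\CG]]$ is the (standard) step the paper leaves implicit, so the argument is correct and essentially identical.
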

\begin{proof}
The proof is a direct application of Proposition \ref{prop: minimal groupoid normal folner}, \ref{prop: exist folner sequence}, Theorem \ref{thm: sofic} and Theorem 1.1 in \cite{Ne}.
\end{proof}

The following strengthens Matui's result on commutator group $D[[\CG]]$ of a minimal almost finite groupoid $\CG$

\begin{cor}\label{cor 2}
	if $\CG$ is second countable minimal almost finite then $D[[\CG]]$ is simple and sofic.  
\end{cor}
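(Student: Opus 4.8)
The plan is to treat the two conclusions separately: simplicity will come directly from Matui's work, while soficity will be deduced from Theorem C via the existence of a F{\o}lner sequence. For the simplicity assertion, I would simply cite the result of Matui in \cite{Matui2}, which guarantees that the commutator subgroup $D[[\CG]]$ is simple whenever $\CG$ is minimal and almost finite. Since these are exactly the standing hypotheses, simplicity requires no further argument.

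The substantive part is soficity, and here the key reduction is to observe that $\CG$ admits a F{\o}lner sequence. Because $\CG$ is almost finite, Proposition \ref{prop: almost finite folner sequence} produces, for every compact set $K$ in $\CG$ and every $\epsilon>0$, a normal $(K,\epsilon)$-F{\o}lner set. Feeding this conclusion into Proposition \ref{prop: exist folner sequence} then yields an actual F{\o}lner sequence $\{S_n:n\in\N\}$ for $\CG$. Since $\CG$ is moreover second countable and minimal (and, by the standing conventions of the paper, locally compact Hausdorff ample with compact unit space), all hypotheses of Theorem \ref{thm: sofic} are met, so $[[\CG]]$ is sofic.

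To pass from $[[\CG]]$ to its commutator subgroup, I would invoke the standard permanence property that the class of sofic groups is closed under passage to subgroups. As $D[[\CG]]$ is by definition a subgroup of $[[\CG]]$, it inherits soficity, completing the proof.

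I do not anticipate any serious obstacle, since the statement is a synthesis of results already available: Matui's simplicity theorem, the two propositions producing a F{\o}lner sequence out of almost finiteness, and Theorem C. The only point that deserves explicit mention, rather than genuine difficulty, is the closure of soficity under subgroups; everything else is a direct application.
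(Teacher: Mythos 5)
Your proposal is correct and follows essentially the same route as the paper: simplicity from Matui's Theorem 4.7 in \cite{Matui2}, and soficity by chaining Proposition \ref{prop: almost finite folner sequence} and Proposition \ref{prop: exist folner sequence} to produce a F{\o}lner sequence and then applying Theorem \ref{thm: sofic}. Your explicit mention of soficity passing to subgroups is a standard fact the paper leaves implicit, and it is the right way to get from $[[\CG]]$ to $D[[\CG]]$.
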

\begin{proof}
	This is a direct application of Proposition \ref{prop: almost finite folner sequence}, \ref{prop: exist folner sequence}, Theorem \ref{thm: sofic} and  Theorem 4.7 in \cite{Matui2}.
\end{proof}

We end this paper with the following remark including several examples.

\begin{rmk}
	\begin{enumerate}
		\item 	if we apply Corollary 7.6 to minimal Cantor dynamical systems of amenable groups $\alpha: \Gamma\curvearrowright X$, one actually obtain that the topological full group $[[X\rtimes \Gamma]]$ is sofic when there is a free point $x\in X$. However, combining results in \cite{E-L}, \cite{El} and \cite{Pau}, $[[X\rtimes \Gamma]]$  is always sofic in the most general sense, i.e., neither necessarily $\alpha$ is minimal nor  there are free points. The method is to study full groups of sofic equivalence relations introduced in \cite{E-L} and \cite{Pau} in a measurable setting. 
		
		\item Corollary 7.8 naturally applies to many minimal almost finite groupoids, such as certain minimal almost finite geometric groupoids introduced in \cite{E} and groupoids of repetitive aperiodic tillings of $\R^d$ with finite local complexity, e.g., Penrose tillings, considered in \cite{I-W-Z}.
	\end{enumerate}

\end{rmk}

\section{Acknowledgement}
The author should like to thank David Kerr, Hanfeng Li and Yongle Jiang for answering his questions on Banach densities and sofic groups. In addition, he would like to thank Jianchao Wu for helpful discussions and comments.

\end{document}